\newcommand{\R}{\mathbb{R}}
\tikzset { domaine/.style 2 args={domain=#1:#2} }
\newtheorem{theorem}{Theorem}
\newtheorem*{corollary}{Corollary}
\newtheorem{theo}{Theorem}[section]
\newtheorem*{theo*}{Theorem}
\newtheorem{prop}[theo]{Proposition}
\newtheorem*{prop*}{Proposition}
\newtheorem{lem}{Lemma}[section]
\newtheorem{cor}{Corollary}[section]
\newtheorem*{cor*}{Corollary}
\newtheorem{de}{Definition }
\newtheorem{claime}{Claim}
\newtheorem{remark}{Remark}[section]
\newcommand{\nocontentsline}[3]{}
\newcommand{\tocless}[2]{\bgroup\let\addcontentsline=\nocontentsline#1{#2}\egroup}
\title{Rigidity Theorems for Asymptotically Euclidean $Q$-singular Spaces}
\author{R. Avalos \footnote{Department of Mathematics, Universität Tübingen}, P. Laurain\footnote{Université Gustave Eiffel, Laboratoire d'Analyse et de Mathématiques Appliquées}, N. Marque\footnote{Université de Lorraine, Institut Elie Cartan de Lorraine}}
\begin{document}
\maketitle

\begin{abstract}
In this paper we prove some rigidity theorems associated to $Q$-curvature analysis on asymptotically Euclidean (AE) manifolds, which are inspired by the analysis of conservation principles within fourth order gravitational theories. A central object in this analysis is a notion of fourth order energy, previously analysed by the authors, which is subject to a positive energy theorem. We show that this energy can be more geometrically rewritten in terms of a fourth order analogue to the Ricci tensor, which we denote by $J_g$. This allows us to prove that Yamabe positive $J$-flat AE manifolds must be isometric to Euclidean space. As a byproduct, we prove that this $J$-tensor provides a geometric control for the optimal decay rates at infinity. This last result reinforces the analogy of $J$ as a fourth order analogue to the Ricci tensor.  
\end{abstract}

\section{Introduction}

In this paper we intend to analyse rigidity properties associated to fourth order geometric operators defined on a Riemannian manifold $(M^n,g)$, $n\ge 3$. In particular, we are interested in asymptotically Euclidean (AE) manifolds. These are complete non-compact Riemannian manifolds $(M^n,g)$, where $M$ consists of a compact core $K$ and, outside of it, there is a diffeomorphism $\Phi$ onto the exterior of a ball in $\mathbb{R}^n$. That is, $M\backslash K\cong \mathbb{R}^n\backslash \overline{B_1(0)}$. In this setting, the metric $g$ is supposed to approach (as we move towards infinity) the Euclidean metric pulled back from $\mathbb{R}^n$ to $M^n$ via $\Phi$.\footnote{For further details, see Definition \ref{AEmanifolds}.} These geometric structures are extremely well motivated from General Relativity (GR), where they model isolated gravitational systems, and have proven to be at the center of several interesting problems in Riemannian geometry \cite{Schoen1,MR849427,Eichmair1,Eichmair2,Carlotto1,Carlotto2,Carlotto3,Carlotto4-2,Carlotto4,Lee-Parker,Yau1,Corvino1,MR2225517}. Most notably, they play a central role in the resolution of the Yamabe problem due to the connection with the Positive Mass Theorem (PMT) in GR (\cite{SY1,SY2,SY3,Lee-Parker}).

It is a classic program within Riemannian and differential geometry to understand how curvature hypotheses combined with a priori mild topological conditions give rise to strong rigidity properties. Among the many classic examples in this program, let us highlight some rigidity properties associated to the PMT of GR. These concern AE manifolds of non-negative scalar curvature, which satisfy decay properties so that the ADM energy
\begin{align}\label{ADMenergy}
E_{ADM}(g)=\frac{1}{2(n-1)\omega_{n-1}}\lim_{r\rightarrow\infty}\int_{S_{r}}\left(\partial_ig_{ij} - \partial_{j}g_{ii} \right)\nu^j_{\delta}d\omega_{\delta}
\end{align}
is well defined. Above, $\nu_{\delta}$ stands for the Euclidean outward pointing unit normal to an Euclidean sphere $S_r\subset \mathbb{R}^n\backslash \overline{B_1(0)}$, while $d\omega_{\delta}$ stands for the volume element induced on $S_r$ by the Euclidean metric. This quantity is a geometric invariant within suitable classes of AE manifolds, and the PMTs state that, under suitable decaying conditions, AE manifolds with non-negative scalar curvature have non-negative ADM energy, and, in particular, $E_{ADM}(g)=0$ iff $(M^n,g)\cong (\mathbb{R}^n,\cdot)$, thus characterising Euclidean space as the unique  AE manifold with zero ADM energy and non-negative scalar curvature. This rigidity of Euclidean space through the PMT is central in many other problems associated to scalar curvature. For instance, it shows that non-negative scalar curvature cannot be localised in a compact set of an AE manifold, which has motivated scalar curvature constructions such as \cite{Carlotto3,Corvino1}. 

One further remarkable rigidity property of AE manifolds is that the Euclidean space is  the only such manifold which is Ricci-flat (see, for instance, \cite{AvalosFreitas,Lee-Parker,MR849427}). This rigidity is central to many uniqueness results, some examples of which can be found in \cite{AvalosFreitas}. Let us notice that this Ricci-flat rigidity of AE manifolds is also linked to the previously mentioned rigidity of the PMT. In fact, classically, the rigidity statement in the PMT uses this fact. Furthermore, it has been shown in \cite{Herzlich,Miao2} that (\ref{ADMenergy}) can be rewritten in terms of the Einstein tensor as
\begin{align}\label{ADMenergy2}
E_{ADM}(g)=-\frac{1}{(n-1)(n-2)\omega_{n-1}}\lim_{r\rightarrow\infty}\int_{S_r}G_{g}(r\partial_r,\nu_{\delta})d\omega_{\delta},
\end{align} 
where $G_g\doteq \mathrm{Ric}_g-\frac{1}{2}R_g\:g$ stands for the Einstein tensor. 

In this paper, we are mainly interested in exploring a fourth-order analogue to this Ricci-flat rigidity property, which is linked to $Q$-curvature analysis. Let us recall that, given a Riemannian manifold $(M^n,g)$, its $Q$-curvature is defined by
\begin{align}
\label{Qgintro}
\begin{split}
Q_g&\doteq -\frac{1}{2(n-1)}\Delta_gR_g - \frac{2}{(n-2)^2}|\mathrm{Ric}_g|^2_g + \frac{n^3-4n^2+16n-16}{8(n-1)^2(n-2)^2}R_g^2,
\end{split}
\end{align}
and this can be seen as a non-linear fourth-order operator on the set $\mathrm{Met}(M)$ of Riemannian metrics on $M$:
\begin{align*}
Q:\mathrm{Met}(M)&\to C^{\infty}(M),\\
g&\mapsto Q_g.
\end{align*}
In this setting, if we denote by $S_2M$ the bundle of symmetric $(0,2)$-tensor fields over $M$, then the linearisation $DQ_g$ of $Q$ at the metric $g$ is given by a map
\begin{align*}
DQ_g:S_2M\to C^{\infty}(M),
\end{align*}
and its formal $L^2$-adjoint is then given by a map $DQ^{*}_g:C^{\infty}(M)\to S_2M$. In this setting, in \cite{aa43}, the authors introduced a $(0,2)$-tensor field canonically associated to $Q$-curvature, given by
\begin{align}\label{J-tensor}
J_g\doteq -\frac{1}{2}DQ^{*}_g(1),
\end{align}
which satisfies a local conservation law
\begin{align}
\label{eqdivJ}
\mathrm{div}_g(J_g-\frac{1}{4}Q_g\:g)=0.
\end{align}

The above relations led the authors in \cite{aa43} to consider $J_g$ as a fourth order analogue of the Ricci tensor, while from \eqref{eqdivJ} one can introduce an analogue of the Einstein tensor:
\begin{align}
G_{J_g}\doteq J_g-\frac{1}{4}Q_g\:g 
\end{align}
as the \emph{$J$-Einstein tensor}. In this setting, Riemannian manifolds for which $\mathrm{Ker}(DQ^{*}_g)\neq \{0\}$ are called \emph{$Q$-singular}. Rigidity of $Q$-singular manifolds has been studied recently, for instance in \cite{aa43,LinYuan1,LinYuan2}. It had been observed by \cite{ChangGursk} that, on a 4-dimensional manifold, $1\in \mathrm{Ker}(DQ^{*}_g)$ iff $g$ is $Q$-flat and Bach-flat. This conclusion can actually be obtained directly from the work of \cite{aa43}, where the authors show that $J_g$ can be rewritten as
\begin{align}\label{J-tensor.2}
J_g=\frac{1}{n}Q_gg-\frac{1}{n-2}B_g -\frac{n-4}{4(n-1)(n-2)}T_g,
\end{align}  
where $B_g$ stands for the Bach tensor and $T_g$ is defined via
\begin{align}\label{T-tensor}
T_g\doteq (n-2)(\nabla^2\mathrm{tr}_gS_g-\frac{1}{n}g\Delta_g\mathrm{tr}_gS_g)+4(n-1)(S_g\times S_g - \frac{1}{n}|S_g|^2_gg)-n^2(\mathrm{tr}_gS_g)\overset{\circ}{S}_g.
\end{align}
Above, $S_g\doteq \frac{1}{n-2}\left(\mathrm{Ric}_g-\frac{1}{2(n-1)}R_g g \right)$ stands for the Schouten tensor; $S_g\times S_g$ stands for the symmetric $(0,2)$-tensor locally defined by $(S_g\times S_g)_{ij}\doteq S^{k}_iS_{kj}$, and $\overset{\circ}{S}_g$ stands for the traceless part of $S_g$.

Recently, in \cite{LinYuan2}, YJ. Lin and W. Yuan studied rigidity phenomena associated to $J$-Einstein metrics on closed manifolds. For instance, under curvature and pinching hypotheses, they establish that $J$-Einstein closed manifolds are actually Einstein.\footnote{Notice that Einstein manifolds are always $J$-Einstein (see, for instance, \cite{aa43}).} Also, a volume comparison theorem based on a $Q$-curvature comparison for closed Einstein manifolds satisfying certain stability properties is a central object of study in \cite{LinYuan2}. 

In the above setting, we will analyse rigidity properties associated to $J$-flat AE manifolds. This analysis is motivated by some of the above results and the rigidity of Ricci-flat AE manifolds. In particular, our main result will be the following rigidity statement:

\begin{theorem}\label{J-flatnessRigidity}
Let $(M^n,g)$,  be a smooth $W^{3,p}_{-\tau}$ AE manifold with $p>n \ge 3$ and $\tau>0$. If $J_g=0$ and $Y([g])>0$, then $(M^n,g)\cong (\mathbb{R}^n,\cdot)$.
\end{theorem}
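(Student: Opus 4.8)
The plan is to reduce $J$-flatness to the vanishing of the fourth order energy discussed in the Introduction, and then to invoke the rigidity part of its positive energy theorem. Recall that, in analogy with the rewriting \eqref{ADMenergy2} of the ADM energy in terms of the Einstein tensor, the fourth order energy of $(M^n,g)$ admits a geometric expression as a flux at infinity of the $J$-Einstein tensor $G_{J_g}=J_g-\frac14 Q_g\,g$. Thus, if I can show that $G_{J_g}\equiv 0$ on $M$, the energy will vanish identically, and the rigidity clause of the positive energy theorem --- whose positivity hypotheses should be guaranteed by $Y([g])>0$ together with the sign of $Q_g$ --- will force $(M^n,g)\cong(\mathbb{R}^n,\cdot)$.

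The first concrete step is to exploit $J_g=0$ to control $Q_g$. Taking the $g$-trace of the decomposition \eqref{J-tensor.2} and using that the Bach tensor $B_g$ is trace free, together with the fact that each of the three blocks defining $T_g$ in \eqref{T-tensor} is of the form ``tensor minus its trace part'' (and that $\overset{\circ}{S}_g$ is trace free), one computes $\mathrm{tr}_g T_g=0$, and hence the clean identity $\mathrm{tr}_g J_g=Q_g$ --- the fourth order mirror of $\mathrm{tr}_g\mathrm{Ric}_g=R_g$, reinforcing the reading of $J_g$ as a fourth order Ricci tensor. Therefore $J_g=0$ yields at once $Q_g=\mathrm{tr}_g J_g=0$ pointwise, and consequently $G_{J_g}=J_g-\frac14 Q_g\,g\equiv 0$ on all of $M$. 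As an independent check, the conservation law \eqref{eqdivJ} with $J_g=0$ gives $\mathrm{div}_g\!\big(\tfrac14 Q_g\,g\big)=\tfrac14\,dQ_g=0$, so $Q_g$ is constant on the connected manifold $M$, and the decay of curvature on the AE end again pins this constant to $0$.

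With $G_{J_g}\equiv 0$ established, the fourth order energy is the flux at infinity of a tensor that vanishes identically, hence it equals zero. It remains to feed this into the positive energy theorem. The hypotheses $p>n$ and $\tau>0$ are exactly what make the AE end regular (via the Sobolev embedding $W^{3,p}_{-\tau}\hookrightarrow C^{2,\alpha}_{-\tau}$) and decaying fast enough for the energy to be well defined and for the positive energy theorem to apply; the Yamabe positivity $Y([g])>0$ supplies the conformal positivity input of that theorem, while the required sign condition on the fourth order curvature is met trivially since $Q_g\equiv 0\ge 0$. The rigidity alternative then asserts that vanishing of the energy characterises Euclidean space, yielding $(M^n,g)\cong(\mathbb{R}^n,\cdot)$.

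I expect the genuine difficulty to lie not in the short deduction above but in the two inputs it rests on. The first is the geometric rewriting of the fourth order energy as the $G_{J_g}$-flux, the analogue of passing from \eqref{ADMenergy} to \eqref{ADMenergy2}: this requires integrating by parts the third order boundary expression and controlling the discarded terms so that they decay at infinity under the given weighted regularity. The second, and most delicate, is matching the precise positivity hypotheses of the positive energy theorem to the ones furnished here --- namely checking that $Y([g])>0$ together with $Q_g\equiv 0$ is enough to trigger the rigidity case --- so that the vanishing of the energy is genuinely incompatible with any non-flat AE geometry.
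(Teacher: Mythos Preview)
Your high-level outline is exactly the one in the paper: show $\mathrm{tr}_gJ_g=Q_g$, hence $J_g=0\Rightarrow Q_g=0\Rightarrow G_{J_g}=0$, then use the Herzlich-type rewriting of $\mathcal{E}(g)$ as a $G_{J_g}$-flux to get $\mathcal{E}(g)=0$, and finish with the rigidity clause of the positive energy theorem under $Y([g])>0$ and $Q_g\equiv 0\ge 0$. The trace computation and the conservation-law double check are fine.

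The genuine gap is the sentence ``The hypotheses $p>n$ and $\tau>0$ are exactly what make the AE end regular \dots\ and decaying fast enough for the energy to be well defined and for the positive energy theorem to apply.'' Neither half of this is true under the stated assumptions. The positive energy theorem (Theorem~\ref{PETHM}) requires $g_{ij}-\delta_{ij}=O_4(r^{-\tau})$ with $\tau>\max\{0,\tfrac{n-4}{2}\}$: that is, \emph{four} derivatives decaying, and for $n\ge 5$ a decay rate strictly above $\tfrac{n-4}{2}$. The embedding $W^{3,p}_{-\tau}\hookrightarrow C^{2,\alpha}_{-\tau}$ you invoke gives only two pointwise derivatives, and $\tau>0$ alone does not even make the boundary integrand in \eqref{FourthOrderEnergy} integrable on spheres when $n$ is large. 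The Herzlich-type identity expressing $\mathcal{E}(g)$ as a $G_{J_g}$-flux (Theorem~\ref{HerzlichThmIntro}) likewise needs $\tau>\max\{0,\tfrac{n-4}{2}\}$ for the error terms to vanish in the limit, so you cannot use it either without first improving the decay.

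What the paper actually does --- and what is the heart of the proof --- is a bootstrap (Theorem~\ref{AEDegree-JtensorIntro}/Theorem~\ref{J-Bootstrap}) showing that $J_g\equiv 0$ forces the metric, in harmonic end coordinates, to be $W^{k,p}_{-\tau'}$-AE for every $k$ and every $\tau'<n-4$. This uses the structure of $J_g$ in an essential way: from $Q_g=0$ one reads $\Delta_gR_g\in W^{l-4,p}_{-2\tau-4}$, upgrades $R_g$ via Proposition~\ref{regularitymaxwell}, then controls $T_g$ and hence $B_g$, extracts from the Bach tensor an elliptic equation $\Delta_g\mathrm{Ric}_{uv}\in W^{l-4,p}_{-\sigma-2}$, upgrades $\mathrm{Ric}_g$, and finally uses \eqref{RicciHarmonic} in harmonic coordinates to upgrade $g$ itself. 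Iterating doubles the decay each round until one passes the threshold $\tfrac{n-4}{2}$. Only then are Theorems~\ref{HerzlichThmIntro} and \ref{PETHM} applicable. You identified the ``matching of hypotheses'' as a potential subtlety, but underestimated it: it is not a check, it is the main theorem.
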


Above, the $W^{k,p}_{\delta}$ Sobolev spaces are used to control the decay at infinity of $g$ and we refer the reader to Definition \ref{WeightedSob1} and Definition \ref{AEManifolds2} for further details. Also, in the above theorem we have imposed the condition $Y([g])>0$, which means that the AE manifold $(M^n,g)$ is \emph{Yamabe positive}. In this setting, the Yamabe invariant is defined as the following conformal invariant
\begin{align}\label{YamabeInvariant}
Y([g])\doteq \inf_{u\in C^{\infty}_0(M)}\frac{\int_{M}(a_n|\nabla u|^2_g + R_gu^2)dV_g}{\Vert u\Vert^2_{L^{\frac{2n}{n-2}}}},
\end{align}
where $a_n\doteq \frac{4(n-1)}{n-2}$ and $[g]$ denotes the conformal class of $g$. For detailed properties of the Yamabe problem on AE manifolds, we refer the reader to \cite{Maxwell-Dilts}.


It should be noted that, while we use decays controlled by weighted Sobolev spaces as they are well adapted to the methods we employ, Theorem \ref{J-flatnessRigidity} covers classical hypotheses involving pointwise decays on the metric. Notably, if  $\vert g - \delta \vert =O_3(r^{-\tau})$ for some $\tau >0$ near infinity, $J_g=0$ and $Y([g])>0$, we naturally have that $(M^n,g)$ is $W^{3,p}_{-\tau'}$ for some $0<\tau'<\tau$, which allows one to apply Theorem \ref{J-flatnessRigidity}. In particular the following corollary stands:
\begin{corollary}
Let $(M^n,g)$ be  a Riemannian manifold such that $\vert g - \delta \vert =O_3(r^{-\tau})$  near infinity, for $\tau>0$.  If $J_g=0$ and $Y([g])>0$, then $(M^n,g)\cong (\mathbb{R}^n,\cdot)$.
\end{corollary}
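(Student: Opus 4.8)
The plan is to deduce the Corollary directly from Theorem \ref{J-flatnessRigidity} by a reduction: I will show that the pointwise decay hypothesis $|g-\delta|=O_3(r^{-\tau})$ forces $(M^n,g)$ into the weighted Sobolev class to which the Theorem applies. Since the remaining hypotheses $J_g=0$ and $Y([g])>0$ are identical in both statements, once the membership is established the conclusion $(M^n,g)\cong(\mathbb{R}^n,\cdot)$ is immediate. Thus there is no new geometric content in the Corollary beyond a regularity/decay bookkeeping step, exactly as foreshadowed in the paragraph preceding it.

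First I would fix the asymptotic chart $\Phi$ implicit in the statement, in which the AE structure is expressed and the decay is measured, and set $h\doteq g-\delta$ on the end $M\setminus K$. By the definition of the symbol $O_3(r^{-\tau})$, the components of $h$ in this chart satisfy $|\partial^\alpha h|=O(r^{-\tau-|\alpha|})$ for every multi-index with $|\alpha|\le 3$, so that each derivative of $h$ decays one order faster than $h$ itself.

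Next I would insert these pointwise bounds into the weighted $L^p$ norm of Definition \ref{WeightedSob1}. With the Bartnik-type weight $\sigma=(1+r^2)^{1/2}$, membership $h\in W^{3,p}_{-\tau'}$ reduces to the finiteness of $\sum_{|\alpha|\le 3}\|\sigma^{\tau'+|\alpha|-n/p}\,\partial^\alpha h\|_{L^p(M\setminus K)}$. Substituting the decay estimates, the weighted term behaves like $r^{\tau'-\tau-n/p}$, so after accounting for the Euclidean volume factor $r^{n-1}\,dr$ the radial part of the integrand behaves like $r^{(\tau'-\tau)p-1}$, which is integrable at infinity precisely when $\tau'<\tau$. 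Hence $h\in W^{3,p}_{-\tau'}$ for every $0<\tau'<\tau$ and every exponent $p\ge 1$; in particular I may fix any $p>n$, so that $(M^n,g)$ is a smooth $W^{3,p}_{-\tau'}$ AE manifold with $p>n\ge 3$ and $\tau'>0$.

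Finally, having placed $(M^n,g)$ within the hypotheses of Theorem \ref{J-flatnessRigidity} while retaining $J_g=0$ and $Y([g])>0$, I invoke that theorem to conclude $(M^n,g)\cong(\mathbb{R}^n,\cdot)$. The only delicate point—more a matter of careful bookkeeping than a genuine obstacle—is to align the sign and normalisation conventions of the weighted norm in Definition \ref{WeightedSob1} so that the exponent count in the third step is correct, together with the observation that the borderline weight $\tau'=\tau$ fails by a logarithmic margin. This is exactly why one can only reach weights strictly less than $\tau$, but the loss is harmless, since Theorem \ref{J-flatnessRigidity} is valid for every positive weight $\tau'$.
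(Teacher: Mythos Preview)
Your proposal is correct and follows exactly the approach indicated in the paper: the pointwise decay $|g-\delta|=O_3(r^{-\tau})$ yields $g-\delta\in W^{3,p}_{-\tau'}$ for any $0<\tau'<\tau$ and any $p>n$, after which Theorem~\ref{J-flatnessRigidity} applies directly. The paper does not give a separate proof of the Corollary beyond the sentence preceding it, and your weighted-norm computation correctly verifies that sentence.
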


In addition, the $W^{3,p}_{-\tau}$ condition in the above theorem means that, a priori, we only control the decay of  $g$ and its derivatives up to third order. This is quite remarkable, since  the geometric objects involved in this theorem are fourth order (and therefore one will need to control fourth order derivatives as well).
Thus, a crucial step in the proof of the above theorem will be to show that the $J$-flat condition actually provides us with additional control for the decays of higher order derivatives. Furthermore, we will need to show that we can bootstrap the order of decay $\tau$ from being merely positive to actually achieving $\tau>\frac{n-4}{2}$. This lower bound for $\tau$ will be needed to appeal to the rigidity of the positive energy theorem associated to a fourth order invariant related to $Q$-curvature. This invariant was introduced as a canonically associated conserved quantity in a family of fourth order gravitational theories in \cite{avalos2021energy}, which, in the case of stationary space-time solutions reduces to 
\begin{align}\label{FourthOrderEnergy}
\mathcal{E}(g)=\lim_{r\rightarrow\infty}\int_{S_r}\left( \partial_j\partial_i\partial_ig_{aa} - \partial_j\partial_a\partial_ig_{ai}\right)\nu^j_{\delta}d\omega_{\delta}.
\end{align}

In \cite[Theorem A]{avalos2021positive}, the above notion of energy was analysed in detail and the following positive energy theorem was established
\begin{theo}[Positive Energy \cite{avalos2021positive}]\label{PETHM}
Let $(M^n,g)$ be an $n$-dimensional AE manifold, with $n\geq 3$, which satisfies the decaying conditions: (i) $g_{ij}-\delta_{ij}=O_4(r^{-\tau})$, with $\tau>\max\{0,\frac{n-4}{2}\}$,  in some coordinate system associated to a structure of infinity; (ii) $Q_g\in L^1(M,dV_g)$, and such that $Y([g])>0$ and $Q_g\geq 0$. Then, the fourth order energy $\mathcal{E}(g)$ is non-negative and $\mathcal{E}(g)=0$ if and only if $(M,g)$ is isometric to $(\mathbb{R}^n,\delta)$.
\end{theo}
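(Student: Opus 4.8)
The plan is to reduce the positivity of the fourth-order energy $\mathcal{E}(g)$ to the classical (second-order) positive mass theorem by using the pointwise expression \eqref{Qgintro} for $Q_g$ to convert the boundary flux \eqref{FourthOrderEnergy} into a convergent bulk curvature integral. The first observation is that the integrand of $\mathcal{E}(g)$ is, to leading order, minus the tangential derivative of the linearised scalar curvature: writing $g=\delta+h$, one has $\partial_j\partial_i\partial_i g_{aa}-\partial_j\partial_a\partial_i g_{ai}=-\partial_j\left(\partial_a\partial_i h_{ai}-\partial_i\partial_i h_{aa}\right)$, and the bracket is precisely the linearisation of $R_g$ at the Euclidean metric. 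Applying the divergence theorem together with the decay $g_{ij}-\delta_{ij}=O_4(r^{-\tau})$, I would establish an identity of the schematic form $\mathcal{E}(g)=-\int_M\Delta_g R_g\,dV_g$, the right-hand side being understood in the regularised sense afforded by the subcritical decay. Substituting \eqref{Qgintro} and splitting $\mathrm{Ric}_g$ into its trace and trace-free parts then yields the bulk identity
\begin{align}
\mathcal{E}(g)=2(n-1)\int_M Q_g\,dV_g+\frac{4(n-1)}{(n-2)^2}\int_M|\overset{\circ}{\mathrm{Ric}}_g|^2_g\,dV_g-\frac{n^2-4}{4n(n-1)}\int_M R_g^2\,dV_g,
\end{align}
where $\overset{\circ}{\mathrm{Ric}}_g$ denotes the trace-free Ricci tensor.

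The decay and integrability hypotheses are exactly what make this rigorous. The condition $Q_g\in L^1(M,dV_g)$ controls the first term, while the requirement $\tau>\max\{0,\tfrac{n-4}{2}\}$ is precisely the threshold guaranteeing that $|\overset{\circ}{\mathrm{Ric}}_g|^2_g$ and $R_g^2$, which decay like $r^{-2\tau-4}$, belong to $L^1(M,dV_g)$; the same decay ensures that the a priori divergent linear boundary contributions vanish in the limit by parity, so that $\mathcal{E}(g)$ is well defined and represented by the above bulk integral.

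The first two terms of this identity are non-negative by virtue of $Q_g\geq0$, so the whole difficulty is concentrated in the sign-indefinite term $-\frac{n^2-4}{4n(n-1)}\int_M R_g^2\,dV_g$; controlling it is the step I expect to be the main obstacle, and it is here that $Y([g])>0$ must enter. The plan is to use Yamabe positivity to pass, via a conformal change $\hat{g}=u^{\frac{4}{n-2}}g$ with $u>0$ and $u\to1$ at infinity solving $-a_n\Delta_g u+R_g u=0$, to a scalar-flat AE representative of $[g]$, for which the bulk identity degenerates to $\mathcal{E}(\hat g)=0$ (the $Q_{\hat g}$ and trace-free Ricci terms cancel exactly). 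The content of the theorem is then that this conformal passage cannot increase the energy, which I would establish by reducing to the classical positive mass theorem: the positivity of the conformal Laplacian guaranteed by $Y([g])>0$, combined with $Q_g\geq0$ through a strong maximum principle for the Paneitz operator (in the spirit of Gursky--Malchiodi), should yield the control on $R_g$ needed to dominate the negative term and conclude $\mathcal{E}(g)\geq0$.

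For the rigidity statement I would argue directly on $g$. Once the scalar term has been controlled as above, the equality $\mathcal{E}(g)=0$ forces every non-negative contribution in the bulk identity to vanish, so that $Q_g\equiv0$, $\overset{\circ}{\mathrm{Ric}}_g\equiv0$ and $R_g\equiv0$. Hence $\mathrm{Ric}_g=\overset{\circ}{\mathrm{Ric}}_g+\tfrac{1}{n}R_g\,g\equiv0$, i.e.\ $(M^n,g)$ is Ricci-flat, and the rigidity of Ricci-flat AE manifolds recalled in the introduction gives $(M^n,g)\cong(\mathbb{R}^n,\delta)$. The converse is immediate, since $\mathcal{E}$ vanishes on the Euclidean metric.
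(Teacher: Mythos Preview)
This theorem is not proved in the present paper: it is quoted verbatim from \cite{avalos2021positive} (their Theorem~A) and is used here as a black box, notably in the proof of Theorem~\ref{theoJflatisflat} via Corollary~\ref{cortheoARJP}. So there is no ``paper's own proof'' to compare against. From the remarks in the introduction (``$\mathcal{E}(g)$ is positively proportional to the mass of the Paneitz operator studied in \cite{Raulot,Malchiodi1,hangyang2}''), the argument in \cite{avalos2021positive} proceeds by identifying $\mathcal{E}(g)$ with the Paneitz mass and then invoking the positive mass theorem for the Paneitz operator (Gursky--Malchiodi, Hang--Yang, Humbert--Raulot), which itself is proved via the Green's function of $P_g$ and a reduction to the classical ADM positive mass theorem.

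Your bulk identity is correct: starting from $\mathcal{E}(g)=-\int_M\Delta_gR_g\,dV_g$ (which does hold under the stated decays, since the quadratic error between $R_g$ and its linearisation contributes $O(r^{n-2\tau-4})$ on $S_r$), substitution of \eqref{Qgintro} and the trace split of $\mathrm{Ric}_g$ yields exactly your displayed formula, and your observation that for a scalar-flat representative the $Q$ and trace-free Ricci terms cancel is also correct. However, the step where the actual content lies---controlling the negative term $-\tfrac{n^2-4}{4n(n-1)}\int_M R_g^2\,dV_g$---is not carried out. You write that Yamabe positivity ``combined with $Q_g\geq 0$ through a strong maximum principle for the Paneitz operator \dots\ should yield the control on $R_g$ needed'', and that ``this conformal passage cannot increase the energy'', but neither assertion is justified: you have not produced a conformal transformation law for $\mathcal{E}$, nor an inequality bounding $\int R_g^2$ by the other two terms. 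Since your rigidity argument (``$\mathcal{E}(g)=0$ forces every non-negative contribution \dots\ to vanish'') presupposes that this control has been obtained, it is likewise incomplete. In short, the outline is in the right spirit---the Paneitz PMT also ultimately reduces to the second-order PMT---but the mechanism you propose is not the one actually used, and as written it has a genuine gap precisely at the point you yourself flag as ``the main obstacle''.
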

 
In order to appeal to Theorem \ref{PETHM} within the proof of Theorem \ref{J-flatnessRigidity}, we will first establish an intermediary result, proving that (\ref{FourthOrderEnergy}) can be rewritten in terms of the J-Einstein tensor at infinity, in an analogue way to how the ADM energy (\ref{ADMenergy}) can be rewritten in terms of the Einstein tensor in (\ref{ADMenergy2}). For this we will follow the ideas of \cite{Herzlich}, but now identifying the total $Q$-curvature as the natural Langrangian out of which we can build asymptotic geometric invariants following \cite{Michel}. In particular, in Theorem \ref{HerzlichThm} we establish the following result:
\begin{theorem}\label{HerzlichThmIntro}
Let $(M^n,g)$ be an AE manifold of order $\tau>\max\{0,\frac{n-4}{2}\}$ satisfying $Q_g\in L^1(M,dV_g)$. Then, the following identity holds:
\begin{align}
\frac{n-4}{8(n-1)}  \mathcal{E}(g)=-\lim_{r\rightarrow\infty}\int_{S_r}G_{J_g}(X,\nu_{\delta})d\omega_{\delta},
\end{align}
where $X=r\partial_r$. In particular, the limit in the right-hand side exists and is finite. 
\end{theorem}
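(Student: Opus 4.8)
The plan is to follow the variational strategy of \cite{Herzlich,Michel}: reduce both sides to the linearisations of the relevant fourth-order operators at the flat metric $\delta$, identifying the total $Q$-curvature as the Lagrangian that generates the invariant, and then exploit the conservation law \eqref{eqdivJ} together with an algebraic trace identity for $G_{J_g}$. Write $g=\delta+h$ on the end, so that $h=O(r^{-\tau})$ in the weighted sense. Since $G_{J_\delta}=0$ and $Q_\delta=0$, every geometric quantity below vanishes to zeroth order in $h$, and the content of the theorem is an identity between two boundary fluxes at infinity, both governed by the linearisations $DQ_\delta$ and $DG_{J_\delta}$.

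First I would reinterpret the energy \eqref{FourthOrderEnergy}. A direct computation shows that its integrand is $\partial_j(\Delta_\delta \mathrm{tr}_\delta h - \mathrm{div}_\delta\mathrm{div}_\delta h)\nu^j_\delta = -\partial_{\nu_\delta}\big(DR_\delta(h)\big)$, where $DR_\delta(h)=\mathrm{div}_\delta\mathrm{div}_\delta h-\Delta_\delta\mathrm{tr}_\delta h$ is the linearised scalar curvature. Since the quadratic terms of $Q_g$ (those built from $|\mathrm{Ric}_g|^2_g$ and $R_g^2$) vanish under linearisation at $\delta$, one has $DQ_\delta(h)=-\frac{1}{2(n-1)}\Delta_\delta DR_\delta(h)$. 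Applying the divergence theorem on the end and using $Q_g\in L^1$ then yields
\begin{align}
\mathcal{E}(g)=-\lim_{r\to\infty}\int_{S_r}\partial_{\nu_\delta}\big(DR_\delta(h)\big)\,d\omega_\delta=2(n-1)\int_M DQ_\delta(h)\,dV_\delta.
\end{align}
The hypothesis $\tau>\frac{n-4}{2}$ is precisely what makes the quadratic remainder $Q_g-DQ_\delta(h)$ lie in $L^1$, so that $DQ_\delta(h)\in L^1$ and the limit exists.

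For the right-hand side I would contract $G_{J_g}$ with the Euclidean dilation field $X=r\partial_r$, forming the Euclidean one-form $W_j=(G_{J_g})_{jk}X^k$, and apply the Euclidean divergence theorem on an annulus $\{r_0\le|x|\le r\}$. Since $\partial_j X^k=\delta^k_j$, one gets $\partial_j W^j=(\mathrm{div}_\delta G_{J_g})_kX^k+\mathrm{tr}_\delta G_{J_g}$. Two structural facts then enter: the conservation law \eqref{eqdivJ}, i.e.\ $\mathrm{div}_g G_{J_g}=0$, and the algebraic trace identity $\mathrm{tr}_g G_{J_g}=-\frac{n-4}{4}Q_g$, which follows from \eqref{J-tensor.2} because the Bach tensor $B_g$ and the tensor $T_g$ of \eqref{T-tensor} are both trace-free while $\mathrm{tr}_g J_g=Q_g$. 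To leading order in $h$ these give $\partial_j W^j=-\frac{n-4}{4}DQ_\delta(h)+(\text{higher order})$, and hence
\begin{align}
-\lim_{r\to\infty}\int_{S_r}G_{J_g}(X,\nu_\delta)\,d\omega_\delta=\frac{n-4}{4}\int_M DQ_\delta(h)\,dV_\delta=\frac{n-4}{8(n-1)}\,\mathcal{E}(g),
\end{align}
which is the claimed identity.

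The main obstacle is the rigorous bookkeeping of the error terms, which is where the decay hypothesis is used sharply. Replacing $G_{J_g}$, $\mathrm{div}_\delta$, $\mathrm{tr}_\delta$, $\nu_\delta$ and $dV_\delta$ by their nonlinear or metric counterparts introduces corrections that are quadratic in $h$ with up to four derivatives (schematically $\partial^a h\,\partial^b h$ with $a+b\le 4$), together with Christoffel discrepancies of the form $\partial h\cdot G_{J_g}$. Contracted with $X\sim r$ and integrated over $S_r\sim r^{n-1}$, the most dangerous of these scale like $r^{\,n-4-2\tau}$, so they vanish in the limit exactly when $\tau>\frac{n-4}{2}$; simultaneously the bulk integral $\int_M DQ_\delta(h)\,dV_\delta$ converges because $Q_g\in L^1$ and the nonlinear remainder is $L^1$ under the same weight. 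Carrying this out in the weighted Sobolev framework of Definition \ref{WeightedSob1}, and treating the contribution of the compact core and inner boundary consistently on both sides, is the technical heart of the argument; the algebraic identities above then show that the leading terms assemble into exactly the stated constant.
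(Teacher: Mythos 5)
Your algebraic skeleton is the same as the paper's: the conservation law $\mathrm{div}_g G_{J_g}=0$, the trace identity $\mathrm{tr}_g G_{J_g}=\tfrac{4-n}{4}Q_g$, the identification of the integrand of $\mathcal{E}(g)$ with $-\partial_{\nu_\delta}\bigl(DR_\delta(h)\bigr)$ and of $DQ_\delta(h)$ with $-\tfrac{1}{2(n-1)}\Delta_\delta DR_\delta(h)$, and the final constant $\tfrac{n-4}{4}\cdot\tfrac{1}{2(n-1)}=\tfrac{n-4}{8(n-1)}$ are all correct and are exactly the ingredients used in Theorem \ref{HerzlichThm}. The gap is in the step where you pass from ``$\partial_j W^j=-\tfrac{n-4}{4}DQ_\delta(h)+(\text{higher order, }L^1)$'' to the asserted identity of limits. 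Applying the divergence theorem on the end $\{r\geq r_0\}$ produces, on \emph{both} sides, a fixed inner-boundary flux at $S_{r_0}$ (and the formulas $\mathcal{E}(g)=2(n-1)\int_M DQ_\delta(h)\,dV_\delta$ and $-\lim\int_{S_r}G_{J_g}(X,\nu_\delta)=\tfrac{n-4}{4}\int_M DQ_\delta(h)\,dV_\delta$ are not even well defined over the compact core, where $h$ and $X=r\partial_r$ do not exist). Moreover, the quadratic error terms --- the Christoffel corrections in $\mathrm{div}_\delta G_{J_g}-\mathrm{div}_g G_{J_g}$, the difference $\mathrm{tr}_\delta-\mathrm{tr}_g$, the conformal-Killing defect of $X$ for $g$, and the Taylor remainder $Q_g-DQ_\delta(h)$ --- are merely \emph{integrable} under $\tau>\tfrac{n-4}{2}$; integrability does not make their integrals over $\{r\geq r_0\}$ vanish. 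Your argument therefore shows that the limit on the right-hand side \emph{exists}, but identifying its value requires showing that the mismatched inner-boundary fluxes and the accumulated quadratic bulk integrals cancel exactly, which is essentially equivalent to the theorem itself and does not follow from the leading-order algebra.

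The paper resolves precisely this point with a device your proposal lacks: it works on a \emph{moving} annulus $\Omega_R=B_R(0)\setminus\overline{B_{R/4}(0)}$ with an $R$-dependent cutoff metric $\hat g=\chi_R g+(1-\chi_R)\delta$ that is exactly Euclidean near the inner boundary (so $G_{J_{\hat g}}$ and $\mathbb{U}$ vanish there and no inner-boundary terms ever appear), while every quadratic bulk error is an integral of an $L^1$ quantity over a region escaping to infinity, hence $o(1)$ as $R\to\infty$; this yields the clean surface-to-surface relation $\int_{S_R}G_{J_g}(X,\nu_g)\,d\omega_g=\tfrac{4-n}{4}\int_{S_R}\langle\mathbb{U}(h,1),\nu_\delta\rangle\,d\omega_\delta+o(1)$, to which Lemma \ref{MichellLemma2} is then applied. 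To complete your proof you would need to incorporate this (or an equivalent mechanism, such as a direct pointwise expansion of $G_{J_g}(X,\nu_\delta)$ into the energy density plus a tangential divergence plus $o(r^{1-n})$ terms); as written, the ``technical heart'' you defer is not mere bookkeeping but the missing idea.
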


As written, Theorem \ref{HerzlichThmIntro} stands for all $n\ge 3$ dimensional manifolds, but is only insightful for $n \ge 5$. Indeed, as was highlighted in \cite{avalos2021positive}, four dimensional AE metrics have zero mass, while in \cite{avalos2021energy}, it was shown that to have non zero mass, three dimensional AE metrics must have growth at infinity. Imposing decay on the metric (and thus also on $G_{J_g}$) then forces  $\mathcal{E}(g)=0 = \lim_{r\rightarrow\infty}\int_{S_r}G_{J_g}(X,\nu_{\delta})d\omega_{\delta}$ for $n=3,4$.

Theorem \ref{HerzlichThmIntro} also represents the exact analogue of the results in \cite{Herzlich} in the AE setting and in the context of $Q$-curvature invariants. This provides a direct link between the $J$-tensor at infinity and the fourth order energy (\ref{FourthOrderEnergy}), which from \cite{avalos2021positive} we know to be related to several rigidity phenomena associated to $Q$-curvature. For instance, $\mathcal{E}(g)$ is positively proportional to the mass of the Paneitz operator studied in \cite{Raulot,Malchiodi1,hangyang2}. Most importantly for our purposes, we directly see that any $J$-flat metric which satisfies the conditions of Theorem \ref{HerzlichThmIntro} must have zero energy. Therefore, after establishing this result, the work leading up to Theorem \ref{J-flatnessRigidity} will consist in proving that the $J$-flatness condition guarantees that both the hypotheses of Theorems \ref{HerzlichThmIntro} and Theorem \ref{PETHM} are satisfied. In doing so, we shall prove some fourth order analogues to classical results known from \cite{MR849427}. In particular, we shall establish the following theorem, which proves that the $J$-tensor controls the optimal decay for the metric in asymptotic harmonic coordinates. {In particular, one sees that an a priori control on $J$ can be used to increase both the number of derivatives decaying at infinity, and the rate of their decay. Notably, we may start with only three derivatives controlled in weighted Sobolev spaces, and, if $J_g$ remains in some $W^{k,p}_{-\delta}$-space, we obtain control for the intermediary derivatives of order $l$, with $3\leq l\leq 3+k$.\footnote{{See Remark \ref{DecayBootsrapRemark} for further comments on the optimality of this result.}} More precisely:}
\begin{theorem}\label{AEDegree-JtensorIntro}
Suppose $(M^n,g)$ is an AE manifold of class $W^{3,p}_{-\tau}(M,\Phi)$, {$p>n\geq 3$}, such that $J_g \in W^{k-4,p}_{-\delta-4}(M,\Phi)$, for an integer $k \ge 4$ {and a real number $\delta\geq \tau$}, with respect to some structure of infinity $\Phi$. Then, in harmonic coordinates at infinity given by the chart $\Theta$,  $(M^n,g)$ is of  class $W^{k,p}_{-\tau}(M,\Theta)$, and if $\tau< \delta<n-4$, then $(M^n,g)$ is of class $W^{k,p}_{-\delta}(M,\Theta)$.
\end{theorem}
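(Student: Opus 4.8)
The plan is to promote the classical second-order argument of \cite{MR849427} to fourth order. First I would pass to \emph{$g$-harmonic coordinates} at infinity, that is, solve $\Delta_g\Theta^i=0$ with $\Theta^i$ asymptotic to the Euclidean coordinates supplied by $\Phi$. Since $g\in W^{3,p}_{-\tau}$ with $p>n$, the weighted elliptic theory for $\Delta_g$ on AE manifolds produces such a chart $\Theta$, the transition map to $\Phi$ lying in a weighted Sobolev class that preserves the AE structure at order $\tau$; this is enough regularity to initiate the bootstrap. The whole point of this gauge is that the diffeomorphism-induced degeneracy of the natural curvature operators disappears: in harmonic coordinates one has the well-known expansion $\mathrm{Ric}_{ij}=-\tfrac12 g^{ab}\partial_a\partial_b g_{ij}+\mathcal{Q}_{ij}(g,\partial g)$, with $\mathcal{Q}_{ij}$ quadratic in $\partial g$, so that $g\mapsto\mathrm{Ric}_g$ becomes a second-order elliptic operator on the metric components.

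Second, I would exhibit $J_g$ as the source term of a fourth-order elliptic equation for $g$. Using the expression \eqref{J-tensor.2} together with \eqref{T-tensor}, the tensors $B_g$, $T_g$ and $Q_g g$ are all built from two derivatives of $\mathrm{Ric}_g$ and $R_g$ plus terms of strictly lower differential order; hence, after substituting the harmonic-coordinate form of $\mathrm{Ric}_g$, the top-order part of the map $g\mapsto J_g$ is a fourth-order operator whose principal symbol, acting on symmetric $2$-tensors, is a multiple of $|\xi|^4$ corrected only on the trace block. Verifying that this symbol is invertible for $\xi\neq 0$ — equivalently, reading the identity as a two-step elliptic system in which $J_g$ together with lower-order terms determines $\mathrm{Ric}_g$ through a second-order elliptic equation of principal type $\Delta_g\mathrm{Ric}_g$, and $\mathrm{Ric}_g$ in turn determines $g$ through the expansion above — is the technical core. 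This yields a quasilinear elliptic identity of the schematic form $\mathcal{L}_g\,g=J_g+N(g)$, where $\mathcal{L}_g$ is fourth-order elliptic with leading part $\sim\Delta_g^2$ and $N(g)$ collects nonlinear terms involving at most three derivatives of $g$.

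Third, for the regularity statement I would run a weighted elliptic bootstrap on $\mathcal{L}_g\,g=J_g+N(g)$. Because $p>n$, the spaces $W^{j,p}_{\beta}$ carry a multiplication/algebra structure, so the quadratic-type remainder $N(g)$, built from at most three derivatives of $g\in W^{3,p}_{-\tau}$, lies in $L^{p}$ with decay $-\tau-4$; since $J_g\in W^{k-4,p}_{-\delta-4}\hookrightarrow L^{p}$, weighted elliptic regularity for $\mathcal{L}_g$ first upgrades $g$ to $W^{4,p}_{-\tau}$. Feeding this back, $N(g)$ gains one derivative at each stage while $J_g$ supplies up to $k-4$ derivatives, so one climbs one order at a time to $g\in W^{k,p}_{-\tau}(M,\Theta)$, which is the first conclusion. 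The first step is the delicate one, since it must close starting from only three controlled derivatives.

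Finally, for the decay improvement I would exploit the isomorphism property of the model operator $\Delta^2$ (and of $\Delta$) between weighted Sobolev spaces at non-exceptional weights. The relevant threshold is the decay $n-4$ of the fundamental solution of the bi-Laplacian, exactly analogous to the threshold $n-2$ for the Laplacian; requiring $\delta<n-4$ keeps the weight $-\delta$ non-exceptional, so that $\Delta^2:W^{k,p}_{-\delta}\to W^{k-4,p}_{-\delta-4}$ is invertible onto its range. Since $N(g)$ decays like $r^{-2\tau-4}$, strictly faster than the linear response, iterating the elliptic isomorphism improves the decay of $g$ geometrically, $\tau\mapsto\min\{2\tau,\delta\}\mapsto\cdots$, the cascade being capped only by the decay $\delta$ of $J_g$ and by the threshold $n-4$; hence $g\in W^{k,p}_{-\delta}(M,\Theta)$ precisely when $\tau<\delta<n-4$. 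I expect the main obstacle to be the ellipticity verification of the second step together with the careful bookkeeping of the weighted norms of $N(g)$, so that each iteration gains exactly one derivative and the decay cascade closes.
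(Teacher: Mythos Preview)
Your proposal is correct in outline and ultimately equivalent to the paper's argument, but the emphasis is different, and the paper's implementation exploits a structural decoupling that you only hint at.

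You frame the argument around a single fourth-order operator $\mathcal{L}_g\sim\Delta_g^2$ acting on the metric components in harmonic coordinates, and plan to invoke weighted Fredholm theory for the bi-Laplacian, with the threshold $n-4$ coming from the fundamental solution of $\Delta^2$. The paper instead runs the argument as two nested second-order steps, using only the well-established Fredholm theory for $\Delta_g$ (with threshold $n-2$), applied twice: once to pass from $J_g$ to $\mathrm{Ric}_g$, and once to pass from $\mathrm{Ric}_g$ to $g$ in harmonic coordinates. The threshold $n-4$ then arises as $(n-2)-2$, i.e.\ from composing two Laplacian inversions. Your ``equivalently'' remark already contains this idea, so the difference is one of packaging rather than substance.

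Where the paper is more explicit is in the first second-order step. To extract a clean equation of the form $\Delta_g\mathrm{Ric}_{uv}\in W^{l-4,p}_{-\sigma-2}$ from the Bach tensor formula, one must first control $\nabla^2 R_g$, since this appears alongside $\Delta_g\mathrm{Ric}$ in $B_g$ and also in $T_g$. The paper does this by taking the trace first: from $\mathrm{tr}_g J_g=Q_g$ one reads off $\Delta_g R_g$ modulo quadratic terms, applies second-order elliptic regularity to upgrade $R_g$, and only then isolates $\Delta_g\mathrm{Ric}$ from the traceless part of $J_g$. In your direct fourth-order framing this trace decoupling is implicit in the symbol computation (trace block versus traceless block), but carrying it out concretely is exactly what makes the first bootstrap step close with only three derivatives of $g$ available. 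A minor procedural difference: the paper performs the curvature estimates in the original chart $\Phi$ (they are tensorial) and switches to harmonic coordinates $\Theta$ only at the very end to make $g\mapsto\mathrm{Ric}_g$ elliptic; you switch first. Either order works, given the invariance of the weighted Sobolev classes under the change to harmonic coordinates.
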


Before finishing this introduction, let us highlight how Theorems \ref{J-flatnessRigidity} and \ref{AEDegree-JtensorIntro} prove that the fourth order $J$-tensor retains crucial controls of the geometry of an AE manifold, which are known to be provided by the Ricci tensor. This might be unexpected due to the intricate and much more involved definition of $J_g$, which could have implied losses on these controls and, for instance, much richer structure of $J$-flat AE manifolds. On the contrary, we still see that this condition is highly rigid (at least on the Yamabe positive side) and that from the decay of $J$ at infinity one can still directly read out the optimal decay of $g$ in harmonic coordinates. {Let us also highlight that they are more subtle than 
in the second order case, something which can be quite explicitly seen within the proof of Theorem \ref{AEDegree-JtensorIntro}. This, once more, comes down to the intricate definition of the $J$-tensor and the higher order nature of the problem, which demands us to also provide some non-trivial generalisations of second order results.} {In particular, Theorem \ref{AEDegree-JtensorIntro} requires understanding how changing structures of infinity affects weighted Sobolev spaces. We develop ideas in R. Bartnik's \cite{MR849427} to highlight that in our framework, one can switch to harmonic coordinates without affecting the regularity or decay of the involved tensor, up to the orders required for the proofs (see Theorem \ref{HarmonicCoordThm}).  }
 
With the above results in mind, the paper is organised as follows. {In Section 2, we shall review the necessary definitions associated to AE manifolds and introduce some notational conventions. We shall establish a few results which follow along standard ones, but which are tailored for the application in this paper. Then, Section 3 is devoted to the study of the Sobolev spaces and their behaviour under switches to harmonic coordinates, while Section 4 focuses on the proof of Theorem \ref{HerzlichThmIntro} and Section 5 on the proof of Theorems \ref{J-flatnessRigidity} and \ref{AEDegree-JtensorIntro}.}

\bigskip
{\bf Acknowledgments:} The authors would like to thank the CAPES-COFECUB and CAPES/MATH-AmSud for their financial support. Also, Rodrigo Avalos would like to thank FUNCAP and the Alexander von Humboldt Foundation for their financial support and Paul Laurain we would like thank ANR (ANR- 18-CE40-002) for their financial support. Finally, we would like to thank professor Jorge H. Lira for helpful discussions related to this project and the problems treated in this paper, { as well as Melanie Graf for helpful remarks concerning section 3 of this paper.} 

\section{Weighted spaces on AE manifolds}
We will first recall the definitions for weighted Lebesgue and Sobolev spaces, using R. Bartnik's notations  \cite[Definition 1.1]{MR849427}
\begin{de}\label{WeightedSob1}
The weighted Lebesgue spaces $L^p_\delta$, $1 \le p \le \infty$, $\delta \in \R$ are the set of measurable functions in $L^p_{\mathrm{loc}}\left(\R^n\right)$ such that the norms $\| \cdot \|_{p,\delta}$ defined by 
$$\begin{aligned} &\| u \|_{p,\delta} = \left\| u \sigma^{-\delta - \frac{n}{p} } \right\|_{L^p(\R^n)} \text{ if } p<+ \infty \\
&\| u \|_{\infty,\delta} = \left\| u \sigma^{-\delta } \right\|_{L^\infty(\R^n)},
\end{aligned}$$
are finite. Here $ \sigma \doteq \sqrt{1+r^2}$
{with $r(x)\doteq |x|$}.

The weighted Sobolev spaces $W^{k,p}_\delta$ are then defined in a similar manner as the set of measurable functions in $W^{k,p}_{\mathrm{loc}}\left( \R^n \right)$ proceeding from the $\|\cdot \|_{k,p,\delta}$ norms:
$$ \|u\|_{k,p,\delta}= \sum_{j=0}^k \| \nabla^j u \|_{p, \delta -j}.$$
\end{de}

\begin{de}\label{AEmanifolds}
Let $(M,g)$ be a complete, smooth, connected, $n$ dimensional  Riemannian manifold  and let $\tau>0$. We say that $(M,g)$ is an Asymptotically Euclidean (AE) manifold of class $W^{k,p}_{-\tau}$  if:
\begin{enumerate}
\item There exists a finite collection $(E_i)_{i=1}^m$ of open subsets of $M$ and diffeomorphisms $\Phi_i : \, E_i  \rightarrow   \R^n\backslash \overline{B_1(0)} $ such 
that  $M \backslash \cup_i E_i$ is compact.
\item For each integer $1\le i \le m$, $ 1\le a,b \le n$,  $\left(\left({\Phi_i^{-1}}\right)^* g \right)_{ab} - \delta_{ab} \in W^{k,p}_{-\tau} (\R^n\backslash \overline{B_1(0)})$.
\end{enumerate}
\end{de}

It should be noticed that, since we assume our manifolds $(M,g)$ to be smooth, saying they are AE of class $W^{k,p}_{-\tau}$ does not impact the differentiability of the metric, but up to what order they can be assumed to decay at infinity.

\begin{remark}
During the core of the paper, in our main theorems, we will  consider AE manifolds with one end. The necessary modifications for the general case are quite straightforward from well-known arguments and an appeal to the same tools we shall develop. In particular, the core of the analysis within our proofs is done by working directly within each end of the manifold, thus allowing us to localise the problem. The main necessary modifications relate to the definition of the fourth order energy obtaining contributions from each end separately, with the total energy being non-negative and the critical case being rigid.
\end{remark}

\begin{de}\label{AEManifolds2}
{The charts $\Phi_i$ are called end charts, and the corresponding coordinates are end coordinates.  Given $(M,g)$ AE  
and $\{U_i\doteq E_i, \Phi_i \}_{i=1}^m$ the collection of end charts, $K \doteq M \backslash \cup_i E_i$ is a compact manifold. Let $\{U_i,\Phi_i\}_{i=m+1}^N$ be a finite number of coordinate charts covering the compact region $K$. We can then consider a partition of unity $\{\eta_i\}_{i=1}^N$ subordinate to the coordinate cover $\{U_i,\Phi_i\}_{i=1}^N$. Then, given a vector bundle $E\xrightarrow{\pi} M$, we define $W^{k,p}_{\delta}(M;E)$ to be the subset of $W^{k,p}_{loc}(M;E)$ such that 
\begin{align}\label{GlobalWeightedAENorm}
\Vert u\Vert_{W^{k,p}_{\delta}}&\doteq\sum_{i=1}^{m}\Vert {\Phi^{-1}_i}^{*}(\eta_i u)\Vert_{W^{k,p}_{\delta}(\mathbb{R}^n)} + \sum_{i=m+1}^N\Vert{\Phi^{-1}_i}^{*}(\eta_i u)\Vert_{W^{k,p}(U_i)}<\infty. 
\end{align}}
\end{de}

\begin{remark}
{Let us recall from \cite{MR849427} that the $L^p_{\delta}$-spaces are actually intrinsic to $M$ and hence independent of any specific structure of infinity. Thus, we shall write $L^p_{\delta}(M)$ without reference to a chosen structure of infinity without causing ambiguity. A priori, the higher order weighted Sobolev spaces do depend on the chosen end charts, and thus when any ambiguity can occur, we shall make explicit reference to the chosen structure. Since we will work with only one end we will write $W^{k,p}_{\delta}(M,\Phi)$ for a given structure of infinity $\Phi$. When we deal with only one fixed structure of infinity, we shall avoid such heavier notation and simply write $W^{k,p}_\delta(M)$.} 
\end{remark}

Working separately on each end, we can extend the classical properties of weighted Sobolev spaces in $\R^n$ to spaces in $M$ \cite[Theorem 1.2]{MR849427}. We will notably use  the algebraic properties of the $W^{k,p}_{\delta}$ spaces for $p>n$. The following multiplication property is well-known from standard literature.
\begin{lem}\label{MultiplicationPropertyGeneral}
Let $V\rightarrow M^n$ be a vector bundle over $M$. 
If $1<p\leq q<\infty $ and $k_1+k_2>\frac{n}{q}+k$ where $k_1,k_2\geq k$ are non-negative integers, then, we have a continuous multiplication property $W^{k_1,p}_{\delta_1}\otimes W^{k_2,q}_{\delta_2}\to W^{k,p}_{\delta}$ for any $\delta>\delta_1+\delta_2$. In particular, $W^{k,p}_{\delta}$ is an algebra under multiplication for $k>\frac{n}{p}$ and $\delta<0$.
\end{lem}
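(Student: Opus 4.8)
The plan is to localise the statement to a single Euclidean end and reduce everything to the corresponding multiplication property for the model weighted Sobolev spaces $W^{k,p}_\delta(\mathbb{R}^n\backslash \overline{B_1(0)})$. Indeed, by the very definition of the global norm \eqref{GlobalWeightedAENorm}, a section is controlled by the unweighted Sobolev norms of its components over the charts covering the compact core $K$ together with the weighted norms of its components pushed to $\mathbb{R}^n\backslash \overline{B_1(0)}$ through each end chart. On the compact region the claim is just the classical multiplication property for Sobolev spaces on bounded domains, so the content of the lemma lies entirely in the asymptotic region, where the bundle metric and the Christoffel symbols are uniformly bounded and the fibrewise product of sections becomes an ordinary product of component functions. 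Thus it suffices to prove that, if $1<p\le q<\infty$, $k_1,k_2\ge k$ and $k_1+k_2>\frac{n}{q}+k$, then $\Vert uv\Vert_{W^{k,p}_\delta}\lesssim \Vert u\Vert_{W^{k_1,p}_{\delta_1}}\Vert v\Vert_{W^{k_2,q}_{\delta_2}}$ on $\mathbb{R}^n\backslash \overline{B_1(0)}$ whenever $\delta>\delta_1+\delta_2$.

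For this I would use a dyadic decomposition into annuli $A_m\doteq \{2^m\le |x|\le 2^{m+1}\}$, $m\ge 0$, on each of which $\sigma\sim 2^m$, together with the rescaling $x=2^m y$ sending $A_m$ to the fixed unit annulus $A\doteq \{1\le |y|\le 2\}$. Writing $u_m(y)\doteq u(2^m y)$, the chain rule and a change of variables give the scaling identity $\Vert u\Vert_{W^{k,p}_\delta(A_m)}\sim 2^{-\delta m}\Vert u_m\Vert_{W^{k,p}(A)}$, with constants independent of $m$; this is the crucial point, since it decouples the weight (which contributes a clean power $2^{-\delta m}$) from the derivative count. On the fixed annulus $A$ I would then invoke the classical unweighted multiplication estimate $\Vert u_m v_m\Vert_{W^{k,p}(A)}\lesssim \Vert u_m\Vert_{W^{k_1,p}(A)}\Vert v_m\Vert_{W^{k_2,q}(A)}$, valid precisely under the hypotheses $1<p\le q<\infty$, $k_1,k_2\ge k$ and $k_1+k_2-k>\frac{n}{q}$ (this follows from the Sobolev embedding on the bounded domain $A$ together with Hölder's inequality, the dimensional condition $k_1+k_2-k>\frac{n}{q}$ encoding exactly the borderline gain of integrability), with a constant depending only on $A$ and not on $m$.

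Undoing the rescaling on both factors then yields, on each annulus, $\Vert uv\Vert_{W^{k,p}_\delta(A_m)}\lesssim 2^{(\delta_1+\delta_2-\delta)m}\Vert u\Vert_{W^{k_1,p}_{\delta_1}(A_m)}\Vert v\Vert_{W^{k_2,q}_{\delta_2}(A_m)}$. The final step is to raise this to the $p$-th power and sum over $m\ge 0$. Here the two remaining hypotheses enter: since $p\le q$ one may bound the localised factor $\Vert v\Vert_{W^{k_2,q}_{\delta_2}(A_m)}$ by the full norm $\Vert v\Vert_{W^{k_2,q}_{\delta_2}}$ and pull it out of the sum, while the inequality $\delta>\delta_1+\delta_2$ makes the weight factors $2^{(\delta_1+\delta_2-\delta)pm}\le 1$ bounded; the finite overlap of the annuli then gives $\sum_m \Vert u\Vert^p_{W^{k_1,p}_{\delta_1}(A_m)}\lesssim \Vert u\Vert^p_{W^{k_1,p}_{\delta_1}}$, and the desired continuous multiplication follows.

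I expect the main obstacle to be the bookkeeping in the scaling identity: one must verify that the powers of $2^m$ produced by the chain rule, by the Jacobian of the change of variables, and by the weight $\sigma\sim 2^m$ inside the $L^p$ integral combine into the single exponent $2^{-\delta m}$ uniformly across all orders $0\le j\le k$, so that no derivative-dependent residue survives. The remaining claim that $W^{k,p}_\delta$ is an algebra for $k>\frac{n}{p}$ and $\delta<0$ is then immediate by taking $k_1=k_2=k$, $p=q$, $\delta_1=\delta_2=\delta$, for which the hypotheses read $k>\frac{n}{p}$ and $\delta>2\delta$, i.e. $\delta<0$, and choosing the target weight to be $\delta$ itself.
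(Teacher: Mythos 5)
Your proposal is correct: the dyadic decomposition into annuli $A_m$, the rescaling to a fixed unit annulus with the weight emerging as the clean factor $2^{-\delta m}$, the classical H\"older--Sobolev multiplication estimate on the fixed annulus under $k_1+k_2-k>\frac{n}{q}$, and the summation using $\delta>\delta_1+\delta_2$ together with pulling out the $q$-factor as a supremum, is exactly the standard argument. The paper itself gives no proof of Lemma \ref{MultiplicationPropertyGeneral}, deferring instead to the tools of \cite{MR849427} and to \cite[Lemma 5.5]{Cantor-SplittingTensors}; your scaling argument is precisely the technique underlying those references, so you have in effect supplied the proof the paper omits.
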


The above multiplication property can be deduced using the tools developed in \cite{MR849427},  and can also be found in \cite[Lemma 5.5]{Cantor-SplittingTensors} and the corresponding $L^2$-version can also be found in \cite[Lemma 2.5]{CB-C}.\footnote{Some of these proofs are done for scalar functions in the cited references. The adaptation for the case of vector valued functions follows from well-known localisation arguments.} Below, we will now establish a multiplication property which is tailored for some of our specific applications.

\begin{prop}\label{Wkpdeltaalgebra}
If $p>n$, $k_1,k_2\ge 1$, $f \in W^{k_1,p}_{\delta_1}(M)$ and $g\in W^{k_2,p}_{\delta_2}(M)$, then  $fg \in W^{k,p}_{\delta_1+\delta_2}(M)$ with $k=\min\{k_1,k_2\}$.
\end{prop}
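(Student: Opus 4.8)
The plan is to localize to a single end, pull the functions back to $\R^n\setminus\overline{B_1(0)}$, and then combine the Leibniz rule with a weighted H\"older inequality that pairs an $L^p$-factor against an $L^\infty$-factor, the latter being controlled \emph{with no loss in the weight} by the weighted Sobolev embedding $W^{1,p}_\beta\hookrightarrow L^\infty_\beta$, which holds precisely because $p>n$. This sharp pairing is exactly what upgrades the general statement of Lemma \ref{MultiplicationPropertyGeneral} (where one only gets $\delta>\delta_1+\delta_2$) to the borderline weight $\delta_1+\delta_2$.

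First I would reduce the problem. Since the global norm \eqref{GlobalWeightedAENorm} is a finite sum of one end-contribution and finitely many compactly supported contributions, and on the compact charts the claim is just the classical unweighted statement that $W^{k,p}$ is an algebra for $p>n$ (proved by the same Leibniz-plus-H\"older argument below, with the weight deleted), the only genuine content lies on the end. Pulling $f,g$ back by $\Phi^{-1}$ converts covariant derivatives into ordinary partial derivatives, so it suffices to establish the estimate $\|fg\|_{k,p,\delta_1+\delta_2}\le C\,\|f\|_{k_1,p,\delta_1}\,\|g\|_{k_2,p,\delta_2}$ for functions on $\R^n\setminus\overline{B_1(0)}$ with the flat weighted norms of Definition \ref{WeightedSob1}.

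Next I would record two elementary facts for the flat weights: the weighted H\"older inequality $\|uv\|_{p,\alpha+\beta}\le \|u\|_{p,\alpha}\,\|v\|_{\infty,\beta}$, obtained from the factorization $uv\,\sigma^{-(\alpha+\beta)-n/p}=\big(u\,\sigma^{-\alpha-n/p}\big)\big(v\,\sigma^{-\beta}\big)$; and the weighted embedding $\|v\|_{\infty,\beta}\le C\|v\|_{1,p,\beta}$ valid for $p>n$. Expanding $\nabla^j(fg)=\sum_{a+b=j}\binom{j}{a}\nabla^a f\,\nabla^b g$ by the Leibniz rule, it then suffices to bound each $\|\nabla^a f\,\nabla^b g\|_{p,\delta_1+\delta_2-j}$ for $a+b=j\le k=\min\{k_1,k_2\}$. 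Splitting the weight as $\delta_1+\delta_2-j=(\delta_1-a)+(\delta_2-b)$ and applying weighted H\"older yields two candidate estimates, namely $\|\nabla^a f\|_{\infty,\delta_1-a}\,\|\nabla^b g\|_{p,\delta_2-b}$ or $\|\nabla^a f\|_{p,\delta_1-a}\,\|\nabla^b g\|_{\infty,\delta_2-b}$.

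The crux—and the only place where the hypothesis $k=\min\{k_1,k_2\}$ with $k_1,k_2\ge 1$ is used—is choosing which candidate is admissible. Via the embedding, the $L^\infty$-factor costs one extra derivative, so $\|\nabla^a f\|_{\infty,\delta_1-a}\le C\|f\|_{a+1,p,\delta_1}$ is controlled by $\|f\|_{k_1,p,\delta_1}$ only when $a\le k_1-1$, whereas the $L^p$-factor $\|\nabla^b g\|_{p,\delta_2-b}\le \|g\|_{k_2,p,\delta_2}$ only needs $b\le k_2$; symmetrically with $f,g$ exchanged. Now one cannot simultaneously have $a\ge k_1$ and $b\ge k_2$, for this would force $j=a+b\ge k_1+k_2>\min\{k_1,k_2\}\ge j$ since $k_1,k_2\ge 1$; hence at least one of $a\le k_1-1$ or $b\le k_2-1$ always holds and the corresponding candidate estimate is available. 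Summing over $a+b=j$ and over $0\le j\le k$ gives the claimed bound. I expect no conceptual obstacle here: the argument is essentially bookkeeping, and the one point demanding care is exactly this derivative-count dichotomy, which guarantees that neither factor is ever differentiated beyond the order its norm can absorb.
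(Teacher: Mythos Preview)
Your proposal is correct and follows essentially the same approach as the paper: Leibniz expansion together with the weighted H\"older pairing $L^p_\alpha\cdot L^\infty_\beta\hookrightarrow L^p_{\alpha+\beta}$, the $L^\infty$-factor being controlled via the Sobolev embedding $W^{1,p}_\beta\hookrightarrow L^\infty_\beta$ for $p>n$. The paper carries this out explicitly only for $k=1$ and then invokes ``the same reasoning with the Leibniz formula'' for $k>1$; your derivative-count dichotomy is precisely the bookkeeping that makes that invocation rigorous, so your write-up is in fact more complete on that point.
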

\begin{proof}
If $k=1$, since $p>n$, Sobolev embeddings ensure that there exists $C$ such that $|f|\le C \sigma^{\delta_1}$, $|g|\le C\sigma^{\delta_2}$.
Then, $fg \in L^p_{\delta_1+\delta_2}$ and  $$ \begin{aligned}
\int \left| \nabla \left[ fg \right] \right|^p \sigma^{-\delta_1 p-\delta_2p+p -n } & \le \int \left|\nabla f g + f \nabla g \right|^p \sigma^{-\delta_1 p+p-\delta_2p-n } \\
&\le C_p \int  \left( |\nabla f |^p |g|^p + |f|^p |\nabla g|^p \right) \sigma^{-\delta_1 p-\delta_2p+p-n } \\
&\le C_p \int  | \nabla f|^p \sigma^{-\delta_1p +p-n} \left(|g|\sigma^{-\delta_2} \right)^p + |\nabla g |^p \sigma^{-\delta_2 p +p-n} \left( |f| \sigma^{-\delta_1} \right)^p  \\
& \le C_p \left( \| \nabla f\|^p_{L^p_{\delta_1-1}}+ \|\nabla g \|^p_{L^p_{\delta_2-1}} \right).
\end{aligned}$$
Hence $fg \in W^{1,p}_{\delta_1+\delta_2}(M).$

For $ k >1$, the same reasoning with the Leibniz formula yields the result.
\end{proof}

We will also use the Sovolev embeddings \cite[Theorem 1.2, iv), iv)]{MR849427} which we recall here:

\begin{theo}
\begin{itemize}
\item If $u \in W^{k,p}_\delta$, then
$$\| u \|_{L^{\frac{np}{n-kp}}_\delta} \le C\|u\|_{W^{k,p}_\delta}$$
if $n-kp>0$ and $p \le q \le \frac{np}{n-kp}$,
$$\| u \|_{L^{\infty}_\delta} \le C\|u\|_{W^{k,p}_\delta}$$
if $n-kp<0$, and in fact 
$$|u(x)|= o \left( r^\delta\right)$$
as $r\rightarrow \infty.$
\item If $u \in W^{k,p}_\delta$, $0< \alpha \le k - \frac{n}{p} \le 1$, then 
$$\|u \|_{C^{0,\alpha}_\delta} \le C\|u \|_{W^{k,p}_\delta},$$
where the weighted Hölder norm is defined by
$$\| u\|_{C^{0,\alpha}_\delta} = \sup \left( \sigma^{-\delta + \alpha}(x) \sup_{4|x-y|\le \sigma(x)} \frac{ |u(x)-u(y)|}{|x-y|^\alpha }\right) + \sup \left( \sigma^{-\delta} |u(x)| \right).$$
\end{itemize}
\end{theo}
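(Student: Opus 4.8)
The plan is to prove all four statements by the standard \emph{dyadic rescaling} technique, reducing each weighted inequality on $\R^n\setminus \overline{B_1(0)}$ to the corresponding classical (unweighted) Sobolev or Morrey inequality on a single fixed annulus. First I would decompose the end into dyadic annuli $A_R\doteq\{R\le |x|\le 2R\}$ with $R=2^j$, $j\ge 0$, and on each of them introduce the rescaled function $u_R(y)\doteq u(Ry)$ defined on the fixed annulus $A_1=\{1\le|y|\le2\}$. Since $\sigma\approx R$ on $A_R$, a direct change of variables shows that the weight $\delta-j$ attached to the $j$-th derivative in Definition \ref{WeightedSob1} is chosen precisely so that every term is scale invariant up to the same factor; concretely one checks
\[
\|u\|_{W^{k,p}_\delta(A_R)}\approx R^{-\delta}\|u_R\|_{W^{k,p}(A_1)},\qquad \|u\|_{L^q_\delta(A_R)}\approx R^{-\delta}\|u_R\|_{L^q(A_1)},
\]
with constants independent of $R$. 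This is the crux of the whole argument: after rescaling, the weights disappear and only the $R^{-\delta}$ prefactors remain to be bookkept.

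For the subcritical case $n-kp>0$ with $p\le q\le \frac{np}{n-kp}$, I would apply the classical Sobolev embedding $\|u_R\|_{L^q(A_1)}\le C\|u_R\|_{W^{k,p}(A_1)}$ (with $C$ uniform in $R$) on each annulus, which by the scaling relations gives $\|u\|_{L^q_\delta(A_R)}\le C\|u\|_{W^{k,p}_\delta(A_R)}$. To globalize, set $a_j\doteq\|u\|_{W^{k,p}_\delta(A_{2^j})}$; summing the $q$-th powers over $j$ and using the sequence embedding $\ell^p\hookrightarrow\ell^q$, which is valid exactly because $q\ge p$, yields
\[
\|u\|_{L^q_\delta}^q\le C^q\sum_j a_j^q\le C^q\Big(\sum_j a_j^p\Big)^{q/p}=C^q\|u\|_{W^{k,p}_\delta}^q.
\]
The supercritical case $n-kp<0$ is handled identically but with the Morrey estimate $\|u_R\|_{L^\infty(A_1)}\le C\|u_R\|_{W^{k,p}(A_1)}$, giving $\sup_{A_R}\sigma^{-\delta}|u|\le C\|u\|_{W^{k,p}_\delta(A_R)}$ and hence the $L^\infty_\delta$ bound after taking the supremum over $R$. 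The decay $|u(x)|=o(r^\delta)$ then follows for free: since $\|u\|_{W^{k,p}_\delta}<\infty$, the tail contributions $a_j=\|u\|_{W^{k,p}_\delta(A_{2^j})}\to 0$ as $j\to\infty$, so $\sup_{A_{2^j}}\sigma^{-\delta}|u|\to 0$.

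The weighted Hölder estimate is the step I expect to be the main obstacle, and where the rescaling has to be carried out with care. Under $x=R\xi$, $y=R\eta$ one computes that the weighted difference quotient scales correctly, $\sigma^{-\delta+\alpha}(x)\frac{|u(x)-u(y)|}{|x-y|^\alpha}\approx R^{-\delta}\frac{|u_R(\xi)-u_R(\eta)|}{|\xi-\eta|^\alpha}$, the two powers of $R$ coming from the weight shift $\alpha$ and from $|x-y|^\alpha=R^\alpha|\xi-\eta|^\alpha$ cancelling. The constraint $4|x-y|\le\sigma(x)$ becomes $|\xi-\eta|\lesssim 1$, so $\xi,\eta$ stay within a bounded distance and lie in at most two adjacent dyadic annuli; to make the classical Morrey inequality $[u_R]_{C^{0,\alpha}(A_1')}\le C\|u_R\|_{W^{k,p}(A_1')}$ apply uniformly one should work on a slightly enlarged fixed annulus $A_1'=\{\tfrac12\le|y|\le 4\}$ so that straddling pairs are captured. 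Combining the seminorm and sup contributions and taking the supremum over $R$ gives the bound. Finally, the passage from $\R^n$ to the manifold $M$ is routine given Definition \ref{AEManifolds2}: one applies the above end-by-end on each $\R^n\setminus\overline{B_1(0)}$ and the classical Sobolev/Morrey embeddings on the compact core $K$, summing through the partition of unity $\{\eta_i\}$.
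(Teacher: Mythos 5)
Your proposal is correct: the paper gives no proof of this statement (it is recalled verbatim from R.~Bartnik's Theorem~1.2 in \cite{MR849427}), and your dyadic-annulus rescaling argument --- exploiting that the weight $\delta-j$ on the $j$-th derivative makes $\|u\|_{W^{k,p}_\delta(A_R)}\approx R^{-\delta}\|u_R\|_{W^{k,p}(A_1)}$, then invoking the classical Sobolev/Morrey embeddings on a fixed annulus, summing with $\ell^p\hookrightarrow\ell^q$ for $q\ge p$, reading off the $o(r^\delta)$ decay from the vanishing dyadic tails, and using an enlarged annulus so the constraint $4|x-y|\le\sigma(x)$ keeps H\"older pairs in comparable annuli --- is precisely the standard scale-invariance proof underlying that cited result.
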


\begin{remark}
We can define the $C^{k,\alpha}_\delta$ spaces as the set of functions for which the norm
$$\| u\|_{C^{k,\alpha}_\delta} = \sum_{j=0}^k \| \nabla^j u \|_{C^{0,\alpha}_{\delta - j}}$$
is finite, while the $C^k_\delta$ spaces are obtained with 
$$\| u\|_{C^{k}_\delta} = \sum_{j=0}^k \| \nabla^j u \|_{C^{0}_{\delta - j}}$$
where the weighted $0$ norm is
$$\| u\|_{C^{0}_\delta} =  \sup \left( \sigma^{-\delta} |u(x)| \right).$$
\end{remark}

A key step in our proof of Theorem \ref{J-flatnessRigidity} will be to use elliptic regularity to improve the decays. {In particular, we shall appeal to an adaptation of results appearing in L. Nirenberg and H. F. Walker \cite{DouglisNirenberg} as well as R. Bartnik \cite{MR849427}. Since we will use it later with several concomitant structures at infinity, we will highlight the end charts in its statement.}

\begin{lem}\label{lemregularite}
Let $(M^n,g)$ be an AE manifold of class $W^{k-1,p}_{-\tau}$, 
$k\geq 2$ and $p>n$. If a function $f$ on $M$ satisfies $\Delta_g f \in W^{k-2,p}_{-\delta-2}(M, \Phi)$  and $ f \in L^p_{-\delta}(M, \Phi)$ {with $\delta\in\mathbb{R} $}, then $f \in W^{k,p}_{-\delta}(M, \Phi).$
\end{lem}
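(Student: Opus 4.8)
The plan is to reduce the statement to a global weighted a priori estimate for $\Delta_g$, obtained by patching rescaled interior elliptic estimates over dyadic annuli, in the spirit of Bartnik \cite{MR849427} and Douglis--Nirenberg \cite{DouglisNirenberg}. Since we work with a single end, the contribution of the compact core is controlled by standard interior elliptic regularity, so everything reduces to the end chart $\Phi$, where we regard $f$ as a function on $\mathbb{R}^n\setminus\overline{B_1(0)}$ and write $\Delta_g f = g^{ij}\partial_i\partial_j f + b^k\partial_k f$, with $b^k = \tfrac{1}{\sqrt{|g|}}\partial_i(\sqrt{|g|}\,g^{ik})$. Because $g$ is AE of class $W^{k-1,p}_{-\tau}$ and $p>n$, the coefficients $g^{ij}$ lie in $W^{k-1,p}_{\mathrm{loc}}\hookrightarrow C^0$ and $b^k\in W^{k-2,p}_{\mathrm{loc}}$, so $\Delta_g$ is uniformly elliptic with coefficients of sufficient regularity; standard interior $L^p$ regularity then already yields $f\in W^{k,p}_{\mathrm{loc}}(M)$ from $f\in L^p_{\mathrm{loc}}$ and $\Delta_g f\in W^{k-2,p}_{\mathrm{loc}}$. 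It therefore only remains to control the decay, i.e. to establish the a priori estimate
\[
\|f\|_{W^{k,p}_{-\delta}(M,\Phi)}\le C\Big(\|\Delta_g f\|_{W^{k-2,p}_{-\delta-2}(M,\Phi)}+\|f\|_{L^p_{-\delta}(M)}\Big).
\]

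For the core of the argument I would work on the dyadic annuli $A_R=\{R\le|x|\le 2R\}$, $R=2^m$, and rescale by $x=Ry$ to a fixed unit annulus. Under this rescaling $\Delta_g$ becomes an elliptic operator whose leading coefficients $g^{ij}(Ry)$ converge, as $R\to\infty$, to the constant Euclidean coefficients $\delta^{ij}$, and whose lower order coefficients together with the derivatives of the coefficients up to order $k-2$ (which involve at most $k-1$ derivatives of $g$) tend to zero; this is exactly where the hypothesis $g-\delta\in W^{k-1,p}_{-\tau}$ with $\tau>0$, together with $p>n$ giving a uniform modulus of continuity of $g^{ij}$ after rescaling, is used. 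Consequently the Agmon--Douglis--Nirenberg interior $L^p$ estimate applies on the unit annulus with a constant independent of $R$ and, after rescaling back, produces a scaled estimate of the form
\[
\sum_{j=0}^{k} R^{j}\|\nabla^j f\|_{L^p(A_R)}\le C\Big(\sum_{j=0}^{k-2} R^{j+2}\|\nabla^j\Delta_g f\|_{L^p(\widetilde A_R)}+\|f\|_{L^p(\widetilde A_R)}\Big),
\]
where $\widetilde A_R$ is a fixed enlargement of $A_R$ and $C$ is uniform in $R$.

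Finally I would insert the weight. Since $\sigma\simeq R$ on $A_R$, raising the scaled estimate to the $p$-th power, multiplying by $R^{\delta p-n}$ so that each term matches the corresponding piece of the weighted norms of Definition \ref{WeightedSob1}, and summing over the dyadic scales $R=2^m$ (using that the enlarged annuli $\widetilde A_R$ have bounded overlap) yields precisely the weighted a priori estimate above. Together with the local regularity from the first step and the finiteness of the right-hand side, guaranteed by $\Delta_g f\in W^{k-2,p}_{-\delta-2}$ and $f\in L^p_{-\delta}$, this gives $f\in W^{k,p}_{-\delta}(M,\Phi)$. I expect the main obstacle to be precisely the \emph{uniformity in $R$} of the interior estimate: one must verify that the rescaled coefficients of $\Delta_g$ and their relevant derivatives are uniformly bounded with a uniform modulus of continuity, so that a single ADN constant works across all annuli. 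It is also here that the direct treatment of the variable coefficients (rather than a perturbative comparison with $\Delta_\delta$) pays off, since it avoids any circular dependence on a priori control of the second derivatives of $f$; and it is why no restriction on $\delta$ is needed, as we are proving a regularity estimate rather than invertibility, the exceptional weights obstructing only solvability. A secondary, purely technical point is the bookkeeping of the scaling exponents and of the bounded overlap in the dyadic summation.
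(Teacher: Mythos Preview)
Your proposal is correct and follows the standard dyadic-rescaling strategy of Bartnik and Nirenberg--Walker. The paper, however, organises the argument differently: it invokes the rescaling argument (via \cite{DouglisNirenberg}) only for the base case $k=2$, and then proceeds by induction on $k$. In the inductive step, assuming the lemma for $k-1$, one first obtains $f\in W^{k-1,p}_{-\delta}$, then differentiates the equation to get
\[
\Delta_g(\partial_l f)=\partial_l(\Delta_g f)-\partial_l g^{ij}\,\partial_{ij}f+\partial_l\bigl(g^{ij}\Gamma^k_{ij}\bigr)\partial_k f,
\]
shows the right-hand side lies in $W^{k-3,p}_{-\delta-3}$ using the multiplication property (Lemma \ref{MultiplicationPropertyGeneral}), and applies the $k-1$ case to each $\partial_l f$.

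The trade-off is this: your direct approach handles all $k$ in one stroke but forces you to verify that the rescaled operator satisfies the hypotheses of the higher-order ADN interior estimate with a uniform constant, which requires tracking $k-1$ derivatives of the coefficients through the rescaling (you correctly flag this as the delicate point, and the hypothesis $g-\delta\in W^{k-1,p}_{-\tau}$ with $p>n$ is indeed just enough). The paper's inductive approach avoids re-deriving higher-order ADN estimates with Sobolev coefficients; it only needs the well-known $W^{2,p}$ case and a short commutator computation, at the cost of an extra bookkeeping step. Both routes are standard and lead to the same conclusion.
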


\begin{proof}

First, since the coefficients of $\Delta_g$ are smooth, then, for any bounded domain $\Omega\subset M$ with smooth boundary, $\Delta_g:C^{\infty}(\Omega) \to C^{-\infty}(\Omega)$, and usual local elliptic regularity applies to show that if $\Delta_gf\in W^{k-2,p}(\Omega)\Longrightarrow f\in W^{k,p}(\Omega)$. Thus, under our hypotheses, we know that $f\in W^{k,p}_{loc}(M)\cap L^p_{-\delta}(M)$. Since we need only provide control at infinity now, using an appropriate cut-off function, we can assume $f$ to be supported in a neighbourhood of infinity in one end of $M$. That is, without loss of generality, we may assume for our purposes that $\mathrm{supp}(f\circ\Phi^{-1})\subset \mathbb{R}^n\backslash \overline{B_{R_0}(0)}$ for some ${R_0}>1$ sufficiently large. Then, our claim reduces to showing that $f\circ\Phi^{-1}\in W^{k,p}_{-\delta}(\mathbb{R}^n\backslash\overline{B_{R_0}(0)})$. We shall divide this proof in two steps. The first one consists in showing the case $k=2$ and the second one is an induction for $k>2$.  {The case $k=2$ is close to several statements in the literature. In particular,
 it follows from mild adaptations of \cite[Theorem 3.1]{DouglisNirenberg} to our setting. As pointed out in \cite[Theorem 4.1]{CB-C} and the remark following it, the results of \cite[Theorem 3.1]{DouglisNirenberg} hold under the weaker assumption $u\in W^{m,p}_{loc}$. Then, the same method of proof as in \cite[Theorem 3.1]{DouglisNirenberg} applies to establish our $k=2$ case.}\footnote{{In this case condition $(ii)$ for lower order coefficients in \cite[Theorem 3.1]{DouglisNirenberg} gets replaced by Sobolev controls. An appeal to Lemma \ref{MultiplicationPropertyGeneral} provides control over the corresponding terms in the equation, whose coefficients for $R_0$ sufficiently large will have Sobolev norm arbitrarily small.}}

Let us now consider $k>2$ and work inductively, assuming that the lemma stands for $k-1$. 
 Let $g-\delta \in W^{k-1,p}_{-\tau}(\mathbb{R}^n\backslash\overline{B_1(0)}))$ and $f $ be such that  $\Delta_g f \in W^{k-2,p}_{-\delta-2}(M)$ and $ f \in L^p_{-\delta}(\mathbb{R}^n\backslash\overline{B_1(0)}))$. Since $W^{k-2,p}_{-\delta -2}(\mathbb{R}^n\backslash\overline{B_1(0)}) \subset W^{k-3,p}_{-\delta-2}(\mathbb{R}^n\backslash\overline{B_1(0)})$ and $W^{k-1,p}_{-\tau} \subset W^{k-2,p}_{-\tau}(\mathbb{R}^n\backslash\overline{B_1(0)})$, the lemma for $k-1$ ensures that $f \in W^{k-1,p}_{-\delta}(\mathbb{R}^n\backslash\overline{B_1(0)})$. Then, for any integer $1\le l \le n$:
$$\begin{aligned}
\Delta_g \left( \partial_l f \right) = \partial_l \left(\Delta_g f \right) -\partial_l g^{ij} \partial_{ij}f + \partial_l \left( g^{ij}\Gamma^k_{ij}\right) \partial_k f.
\end{aligned}$$
Since $f$ has been shown to be in $W^{k-1,p}_{-\delta}(\mathbb{R}^n\backslash\overline{B_1(0)})$ and $g-\delta \in W^{k-1,p}_{-\tau}(\mathbb{R}^n\backslash\overline{B_1(0)})$, appealing to Lemma \ref{MultiplicationPropertyGeneral} we find $\partial_l g^{ij}\partial_{ij}f \in W^{k-3,p}_{-\delta -3}(\mathbb{R}^n\backslash\overline{B_1(0)}))$. Similarly, since $g^{ij} \Gamma^k_{ij} \in W^{k-2,p}_{-\tau - 1}(\mathbb{R}^n\backslash\overline{B_1(0)})$, $\partial_l \left( g^{ij}\Gamma^k_{ij}\right) \partial_k f \in  W^{k-3,p}_{-\delta-3}(\mathbb{R}^n\backslash\overline{B_1(0)}).$  
Finally, since by assumption $\Delta_g f \in W^{k-2,p}_{-\delta-2}(\mathbb{R}^n\backslash\overline{B_1(0)}))$, $\partial_l \Delta_g f \in W^{k-3,p}_{-\delta-3}(\mathbb{R}^n\backslash\overline{B_1(0)}))$.  Combining all these considerations yields:
$$\Delta_g \left( \partial_l f \right) \in W^{k-3,p}_{-\delta-3}(\mathbb{R}^n\backslash\overline{B_1(0)}).$$
Applying once more the lemma with $k-1$ then yields that $\partial_l f \in W^{k-1,p}_{-\delta-1}(\mathbb{R}^n\backslash\overline{B_1(0)})$. This stands true for all partial derivatives $\partial_l f$ and since $f \in L^p_{-\delta}(\mathbb{R}^n\backslash\overline{B_1(0)}))$, it ensures that $f \in W^{k,p}_{-\delta }(\mathbb{R}^n\backslash\overline{B_1(0)})$, which concludes the proof.

\end{proof}

As was mentioned in the introduction, we will also need to improve the order of decay. A key point in our proof will thus be the  following regularity result for weighted spaces \cite[Proposition 1]{MR2116728} (which extends \cite[Proposition 1.14]{MR849427} in the same manner as lemma \ref{lemregularite} follows from \cite{MR849427}[Prop 1.6]):

\begin{prop}
\label{regularitymaxwell}
Assume that $(M,g)$ is AE of class $W^{k,p}_{-\tau}$, $k \ge 2$ and $p> \frac{n}{k}$. Then, if $0<\delta < n-2$ the operator $\Delta_g: \, W^{k,p}_{-\delta} (M) \rightarrow W^{k-2,p}_{-\delta -2} (M)$ is Fredholm of index $0$. In particular, if $\tau \le \delta$ and $X \in W^{2,p}_{-\tau}(M)$, $\Delta_g X \in W^{k-2,p}_{-\delta-2}(M)$ then $ X \in W^{k,p}_{-\delta}(M).$
\end{prop}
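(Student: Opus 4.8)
The plan is to read this as the standard weighted elliptic (Fredholm) statement for the Laplacian on an AE manifold, in the spirit of \cite{MR849427} and \cite{MR2116728}, and to deduce the ``in particular'' clause from the resulting isomorphism property together with the uniqueness of decaying harmonic functions. Accordingly I would split the argument into two parts: (i) the Fredholm statement with vanishing index, and (ii) the regularity/decay transfer for $X$.

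For part (i), I would first reduce to the flat model. In end coordinates one writes $\Delta_g = \Delta_\delta + (g^{ij}-\delta^{ij})\partial_i\partial_j + b^k\partial_k$, where $\Delta_\delta$ is the Euclidean Laplacian and, since $g-\delta\in W^{k,p}_{-\tau}$ with $\tau>0$, the coefficients $g^{ij}-\delta^{ij}$ and $b^k$ decay at infinity (the latter being controlled through the Christoffel symbols by Lemma \ref{MultiplicationPropertyGeneral}). For the Euclidean Laplacian it is classical, by the indicial root analysis of \cite{MR849427}, that $\Delta_\delta:W^{k,p}_{-\delta}(\R^n)\to W^{k-2,p}_{-\delta-2}(\R^n)$ is an isomorphism precisely because the weight $-\delta$ avoids the exceptional indicial roots, which are the integers that are $\ge 0$ or $\le 2-n$; this is exactly the range $0<\delta<n-2$. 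I would then establish the weighted a priori estimate $\|u\|_{W^{k,p}_{-\delta}}\le C\big(\|\Delta_g u\|_{W^{k-2,p}_{-\delta-2}}+\|u\|_{L^p(\Omega)}\big)$ on a sufficiently large compact set $\Omega$, obtained by combining the flat estimate with the smallness of the perturbation coefficients outside a large ball (using Lemma \ref{MultiplicationPropertyGeneral} to make the far-away part of the perturbation have arbitrarily small operator norm) and interior elliptic regularity on $\Omega$. Together with the compactness of $W^{k,p}_{-\delta}(\Omega)\hookrightarrow L^p(\Omega)$, this shows that $\Delta_g$ has finite-dimensional kernel and closed range; running the same argument on the formal adjoint yields finite cokernel, so $\Delta_g$ is Fredholm. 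Its index is then pinned down by comparison with the flat model: the index of a uniformly elliptic operator asymptotic to $\Delta_\delta$ at a non-exceptional weight depends only on that weight and not on the particular asymptotically flat coefficients, so it equals $\mathrm{ind}(\Delta_\delta)=0$ (concretely, one homotopes the asymptotic coefficients from $g$ to $\delta$ through uniformly elliptic operators asymptotic to $\Delta_\delta$, along which the index is locally constant).

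For part (ii), I would note that for every weight in $(0,n-2)$ the kernel of $\Delta_g$ is trivial: a $\Delta_g$-harmonic function in $W^{2,p}_{-s}$ with $s>0$ decays to zero at infinity and hence vanishes by the maximum principle, equivalently by the injectivity part of the weighted theory. Combined with the vanishing index, this upgrades part (i) to the statement that $\Delta_g:W^{k,p}_{-\delta}\to W^{k-2,p}_{-\delta-2}$ is an \emph{isomorphism} for $0<\delta<n-2$. Now, given $X\in W^{2,p}_{-\tau}$ with $h\doteq\Delta_g X\in W^{k-2,p}_{-\delta-2}$ and $0<\tau\le\delta<n-2$, the isomorphism yields a unique $\hat X\in W^{k,p}_{-\delta}$ with $\Delta_g\hat X=h$. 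Since $W^{k,p}_{-\delta}\subset W^{2,p}_{-\tau}$, the difference $X-\hat X$ lies in $W^{2,p}_{-\tau}$ and is $\Delta_g$-harmonic, hence vanishes by triviality of the kernel at the weight $-\tau$; therefore $X=\hat X\in W^{k,p}_{-\delta}$. Alternatively, once the weight has been improved to $X\in W^{2,p}_{-\delta}\subset L^p_{-\delta}$, one may conclude directly by the elliptic bootstrap of Lemma \ref{lemregularite}.

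I expect the main obstacle to lie entirely in part (i), and within it in the low regularity of the metric: the coefficients of $\Delta_g$ are only of class $W^{k,p}$ with $p>\frac{n}{k}$, so the a priori estimates and the Fredholm/index theory must be carried out with merely Sobolev (rather than smooth) coefficients, the perturbation terms being controlled throughout by the multiplication property of Lemma \ref{MultiplicationPropertyGeneral} and made small at infinity. This is precisely the technical point addressed in \cite{MR2116728} extending \cite{MR849427}. By contrast, once the isomorphism is in hand, part (ii) is a short and essentially formal consequence of the uniqueness of decaying harmonic functions.
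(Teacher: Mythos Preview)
Your outline is correct and is the standard route to this result. Note, however, that the paper does not actually prove Proposition \ref{regularitymaxwell}: it is stated as a direct citation of \cite{MR2116728}[Proposition 1] (which extends \cite{MR849427}[Proposition 1.14]), with no argument given. Your sketch---perturbation off the Euclidean model, weighted a priori estimate plus compact embedding to obtain the Fredholm property, index pinned to $0$ by homotopy to $\Delta_\delta$ at a non-exceptional weight, and then the isomorphism-plus-uniqueness argument (via the maximum principle for decaying harmonic functions) for the ``in particular'' clause---is precisely the strategy one finds in those references, so there is no proof in the paper itself to compare against.
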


\section{Harmonic end coordinates}

{In this subsection we shall  {further develop} some concepts from R. Bartnik \cite{MR849427} associated to harmonic structures of infinity to higher orders of regularity. Harmonic coordinates are special charts at infinity conceived to have optimal regularity and decay properties, and for which the Ricci tensor can be expressed as an elliptic operator on the metric:
\begin{align}\label{RicciHarmonic}
{\mathrm{Ric}_g}_{ij}=-\frac{1}{2}g^{ab}\partial_{ab}g_{ij} + f_{ij}(g,\partial g),
\end{align}
where $f_{ij}$ are smooth functions on their arguments, which are in particular quadratic polynomials on $\partial g$. }

{A key idea in our proof will be to use the fourth order tensorial equalities to gain regularity on the Ricci tensor, and then to apply \eqref{RicciHarmonic} to transfer this regularity on the metric after switching to harmonic coordinates. This requires showing that no Sobolev control on the Ricci tensor is lost during the switch. While this may seem natural, it forces us to understand how weighted Sobolev controls are affected under changes of structures of infinity. We notice that this question arises in the context of classic Sobolev spaces as well, where related results are \emph{standard}. That is, let $\Omega$ be a domain in $\mathbb{R}^n$ and consider a diffeomorphism $\Phi:\Omega\to\Omega'$. The invariance of the Sobolev spaces $W^{k,p}(\Omega)$ under such coordinate change is known to depend on the $C^k$-control of $\Phi$ and $\Phi^{-1}$. In particular, $C^{\infty}$-differomorphisms on bounded domains always induce equivalent Sobolev norms, which is the key fact behind the invariance of Sobolev spaces on compact manifolds. In the case $\Omega$ is unbounded, then such $C^k$-controls have to be imposed as additional hypothesis on the admissible coordinate changes (see, for instance, \cite[Theorem 3.41]{Adams}).}


{The core of the difficulty in the above claims lies in the chain rule. For instance, if $f(x) \in W^{1,p}(\Omega)$ and $\Phi$ is a diffeomorphism at infinity for the change of coordinates $\Phi(x)=y$, it is not enough to show that $(\nabla f)\circ \Phi^{-1} \in L^p(\Omega)$ to conclude that $f\circ \Phi^{-1} \in W^{1,p}(\Omega)$. On the contrary, $\nabla\left( f \circ \Phi^{-1} \right) = (\nabla f )\circ \Phi^{-1} \nabla \Phi^{-1}$, which highlights the necessity  for strong controls on the inverse of the change of coordinates. Carrying this to any order, for weighted spaces, {and only for a priori controls on $\Phi$} are the difficulties one is faced with in the following results. We show that for a switch to a set of harmonic coordinates, these controls come naturally.} {Related results with application to GR can be consulted in \cite{Graf}.}


With all the above in mind, let us consider two real numbers $R_i>0$, $i=1,2$, and a diffeomorphism
\begin{align*}
\Phi:\mathbb{R}^n\backslash\overline{B_{R_1}(0)} \to \mathbb{R}^n\backslash\overline{B_{R_2}(0)}.
\end{align*}
Consider then the operator $A:\mathcal{L}(\mathbb{R}^n\backslash\overline{B_{R_1}(0)})\to \mathcal{L}(\mathbb{R}^n\backslash\overline{B_{R_2}(0)})$, where $\mathcal{L}(\Omega)$ denotes the set of Lebesgue measurable functions on the domain $\Omega\subset \mathbb{R}^n$, given by:
\begin{align}\label{CoordinateChangeOp}
(Au)(y)\doteq u(\Phi^{-1}(y)),
\end{align}
which is the operator inducing our coordinate change. Below, we shall refer to $y=\Phi(x)$, $x\in \mathbb{R}^n\backslash\overline{B_{R_1}(0)}$ as the coordinates on $\mathbb{R}^n\backslash\overline{B_{R_2}(0)}$. Then, the following lemma holds:
\begin{lem}\label{ChangeOfCoordinatesLemma}
Consider the setting described above for a coordinate change. If $\Phi-\mathrm{Id}\in C_{1-\kappa}^{m}(\mathbb{R}^n\backslash \overline{B_{R_1}(0)})$ and $\Phi^{-1}-\mathrm{Id}\in C_{1-\kappa}^{m}(\mathbb{R}^n\backslash \overline{B_{R_2}(0)})$ for  $m\geq 1$ and some $\kappa>0$, then, given $\delta\in \mathbb{R}$ and $1<p<\infty$, the operator $A$ given by (\ref{CoordinateChangeOp}) maps $W^{m,p}_{\delta}(\mathbb{R}^n\backslash \overline{B_{R_1}(0)})$ continuously onto $W_{\delta}^{m,p}(\mathbb{R}^n\backslash \overline{B_{R_2}(0)})$. 
\end{lem}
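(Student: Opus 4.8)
The plan is to show that both $A$ and its inverse $A^{-1}$ (the pullback by $\Phi$) are bounded on the relevant weighted Sobolev spaces; since they are mutually inverse linear maps, this makes $A$ a topological isomorphism and in particular a continuous surjection. Because the hypotheses on $\Phi-\mathrm{Id}$ and $\Phi^{-1}-\mathrm{Id}$ have identical form, it suffices to prove boundedness of $A$, the argument for $A^{-1}$ being verbatim with the roles of $\Phi$ and $\Phi^{-1}$ exchanged.

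First I would record two elementary consequences of $\Phi-\mathrm{Id}\in C^{m}_{1-\kappa}$ with $\kappa>0$. Since $|\Phi(x)-x|=O(\sigma(x)^{1-\kappa})=o(|x|)$, the weight is comparable under the coordinate change: there is $C>0$ with $C^{-1}\sigma(x)\le \sigma(\Phi(x))\le C\,\sigma(x)$ on $\mathbb{R}^n\backslash\overline{B_{R_1}(0)}$. Moreover $D\Phi=\mathrm{Id}+\nabla(\Phi-\mathrm{Id})$ with $\nabla(\Phi-\mathrm{Id})=O(\sigma^{-\kappa})\to 0$, so $\det D\Phi\to 1$ and hence $|\det D\Phi|$ is bounded above and away from zero on the whole end. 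These are exactly the facts needed to carry weighted $L^p$ norms through the substitution $y=\Phi(x)$.

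Next I would expand the derivatives of $Au=u\circ\Phi^{-1}$ via the Faà di Bruno (higher chain rule) formula: for $0\le j\le m$, $\nabla^{j}(u\circ\Phi^{-1})$ is a finite sum of terms
\[
T=\big((\nabla^{i}u)\circ\Phi^{-1}\big)\cdot\big(\nabla^{a_1}\Phi^{-1}\otimes\cdots\otimes\nabla^{a_i}\Phi^{-1}\big),\qquad a_l\ge 1,\ \sum_{l} a_l=j,
\]
where the number of inner factors equals the order $i\le j$ of the outer derivative (for $W^{m,p}$ functions this is justified by density of smooth maps together with the a priori bounds below). The crucial bookkeeping is the decay of $W\doteq\nabla^{a_1}\Phi^{-1}\otimes\cdots\otimes\nabla^{a_i}\Phi^{-1}$. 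Each factor with $a_l=1$ is bounded, being $\mathrm{Id}+O(\sigma^{-\kappa})$, while each factor with $a_l\ge 2$ equals $\nabla^{a_l}(\Phi^{-1}-\mathrm{Id})=O(\sigma^{1-\kappa-a_l})$. Writing $s_2$ for the number of factors with $a_l\ge 2$, a direct computation of exponents gives $|W|\lesssim \sigma^{-(j-i)-s_2\kappa}\le C\,\sigma^{-(j-i)}$, so the higher derivatives of $\Phi^{-1}$ supply precisely the weight deficit $j-i$ whenever $i<j$. Combining this decay with weight comparability, the bounded Jacobian, and the change of variables, each term satisfies
\begin{align*}
\|T\|_{L^p_{\delta-j}}^p &\lesssim \int |(\nabla^i u)\circ\Phi^{-1}|^p\,\sigma^{-(j-i)p}\,\sigma^{-(\delta-j)p-n}\,dy = \int |(\nabla^i u)\circ\Phi^{-1}|^p\,\sigma^{-(\delta-i)p-n}\,dy\\
&= \int |\nabla^i u|^p\,\sigma(\Phi(x))^{-(\delta-i)p-n}\,|\det D\Phi|\,dx \lesssim \|\nabla^i u\|_{L^p_{\delta-i}}^p \le \|u\|_{W^{m,p}_\delta}^p,
\end{align*}
using $-(j-i)p-(\delta-j)p=-(\delta-i)p$ and $i\le m$. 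Summing over the finitely many Faà di Bruno terms and over $0\le j\le m$ yields $\|Au\|_{W^{m,p}_\delta}\lesssim\|u\|_{W^{m,p}_\delta}$, and the symmetric estimate bounds $A^{-1}$.

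I expect the main obstacle to be the weight bookkeeping in the third step: verifying that the \emph{weighted} decay $C^{m}_{1-\kappa}$ of the coordinate change, and not merely $C^{m}$-boundedness, is what furnishes the missing decay $\sigma^{-(j-i)}$ when the outer derivative order $i$ is strictly smaller than $j$. This is exactly where the hypothesis is used essentially, and it explains why the naive estimate relying only on boundedness of $\nabla\Phi^{-1}$ fails for the mixed terms.
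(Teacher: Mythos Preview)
Your argument is correct and follows the same overall strategy as the paper's proof: expand $\partial^{\alpha}(Au)$ by the higher chain rule, bound the coefficient polynomials in the derivatives of $\Phi^{-1}$, and transfer the integral by the change of variables $y=\Phi(x)$ using weight comparability and bounded Jacobian.

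Your write-up is in fact more careful on the one delicate point. The paper bounds the Fa\`a di Bruno coefficients $M_{\alpha\beta}(\Phi^{-1})$ merely by $\Vert d\Phi^{-1}\Vert_{C^{|\alpha|-1}}$ and then writes only $\partial^{\alpha}f$ in the resulting integral, which obscures the issue that for $|\beta|<|\alpha|$ one needs the extra decay $\sigma^{-(|\alpha|-|\beta|)}$ to match the weight $L^p_{\delta-|\alpha|}$ to the available control $\partial^{\beta}f\in L^p_{\delta-|\beta|}$. You make this explicit: factors $\nabla^{a_l}\Phi^{-1}$ with $a_l\ge 2$ are $O(\sigma^{1-\kappa-a_l})$, and the bookkeeping $\sum(a_l-1)=j-i$ recovers precisely the deficit $\sigma^{-(j-i)}$. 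That is exactly where the \emph{weighted} hypothesis $\Phi^{-1}-\mathrm{Id}\in C^m_{1-\kappa}$, rather than mere $C^m$-boundedness, is used, and your final paragraph identifies this correctly.
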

\begin{proof}
In the case of $L^p_{\delta}$ the claim follows from the coordinate change formula and the boundedness of $d\Phi$. Since the computation is similar to that of higher derivatives, we shall only work with the latter. Thus, first notice that, for instance from \cite[Theorem 3.41]{Adams}, we know that the weak derivatives of $Af$ of order $1\leq |\alpha|\leq m$ are given by the usual chain rule:
\begin{align*}
\partial^{\alpha}(Af)(y)=\sum^{|\alpha|}_{|\beta|=1}M_{\alpha\beta}(\Phi^{-1})A(\partial^{\beta}f)(y),
\end{align*}
where $M_{\alpha\beta}$ is a polynomial of degree between one and $|\beta|$ in the derivatives of order between one and $|\alpha|$ of the different components of $\Phi^{-1}$. Thus,
\begin{align*}
\int_{\mathbb{R}^n\backslash\overline{B_{R_2}(0)}}|\partial^{\alpha}_y(Af)(y)|^p|y|^{-\delta p+ |\alpha|p -n}dy&\lesssim \sup_{1\leq |\beta|\leq |\alpha|}\sup_{y\in \mathbb{R}^n\backslash\overline{B_{R_2}(0)}}|M_{\alpha\beta}(\Phi^{-1})|\times \\
&\int_{\mathbb{R}^n\backslash\overline{B_{R_2}(0)}}|(\partial^{\alpha}f)(\Phi^{-1}(y))|^p|y|^{-\delta p+ |\alpha|p -n}dy,\\
&\lesssim \Vert d\Phi^{-1}\Vert_{C^{|\alpha|-1}(\mathbb{R}^n\backslash\overline{B_{R_2}(0)})}\times\\
&\int_{\mathbb{R}^n\backslash\overline{B_{R_1}(0)}}|(\partial^{\alpha}f)(x)|^p|x|^{-\delta p+ |\alpha|p -n}|\mathrm{det}(d\Phi)(x)|dx,\\
&\leq C_{\alpha}(d\Phi^{-1})\int_{\mathbb{R}^n\backslash\overline{B_{R_1}(0)}}|(\partial^{\alpha}f)(x)|^p|x|^{-\delta p+ |\alpha|p -n}dx,\\\
&\leq C_{\alpha}(d\Phi^{-1})\Vert \partial^{\alpha}f\Vert_{L^{p}_{\delta-|\alpha|}(\mathbb{R}^n\backslash\overline{B_{R_1}(0)})}
\end{align*}
where the constant $C_{\alpha}$ depends only on $\Vert d\Phi^{-1}\Vert_{C^{|\alpha|-1}(\Omega_2)}$ and fixed parameters. Therefore, 
\begin{align*}
\partial^{\alpha}(Af)\in L^p_{\delta-|\alpha|}(\mathbb{R}^n\backslash\overline{B_{R_2}(0)})\Longleftrightarrow \partial^{\alpha}f\in L^p_{\delta-|\alpha|}(\mathbb{R}^n\backslash\overline{B_{R_1}(0)}),
\end{align*}
which implies 
\begin{align*}
(Af)\in W^{m,p}_{\delta}(\mathbb{R}^n\backslash\overline{B_{R_2}(0)})\Longleftrightarrow f\in W^{m,p}_{\delta}(\mathbb{R}^n\backslash\overline{B_{R_1}(0)}),
\end{align*}
Furthermore, there is a constant $C>0$ such that $\Vert Af\Vert_{W^{m,p}_{\delta}(\mathbb{R}^n\backslash\overline{B_{R_2}(0)})}\leq C \Vert f\Vert_{W^{m,p}_{\delta}(\mathbb{R}^n\backslash\overline{B_{R_1}(0)})}$. The converse implication follows along the same lines, but now appealing to $\Vert d\Phi\Vert_{C^{m-1}}<\infty$.
\end{proof}

\begin{lem}\label{ChangeOfCoordinatesHolderLemma}
Consider the setting described above for a coordinate change. Then, given $\kappa>0$, $m\geq 1$ and $\delta\in \mathbb{R}$, the following statements hold:
\begin{enumerate}
\item If $\Phi-\mathrm{Id}\in C_{1-\kappa}^{0}(\mathbb{R}^n\backslash \overline{B_{R_1}(0)})$, then
\begin{align}\label{A-HolderProp.1}
A:C_{\delta}^{0}(\mathbb{R}^n\backslash \overline{B_{R_1}(0)})\to C_{\delta}^{0}(\mathbb{R}^n\backslash \overline{B_{R_2}(0)}) \text { is bounded }.
\end{align}
\item  If $\Phi-\mathrm{Id}\in C_{1-\kappa}^{m}(\mathbb{R}^n\backslash \overline{B_{R_1}(0)})$, then $\Phi^{-1}-\mathrm{Id}\in C_{1-\kappa}^{m}(\mathbb{R}^n\backslash \overline{B_{R_2}(0)})$ and moreover the operator
\begin{align}\label{A-HolderProp.2}
A:C_{\delta}^{m}(\mathbb{R}^n\backslash \overline{B_{R_1}(0)})\to C_{\delta}^{m}(\mathbb{R}^n\backslash \overline{B_{R_2}(0)}) \text{ is bounded }.
\end{align}
\end{enumerate}
\end{lem}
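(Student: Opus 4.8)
The plan is to reduce both parts to the single asymptotic comparison $\sigma(\Phi^{-1}(y))\asymp\sigma(y)$, and then run the chain rule with careful weight bookkeeping. First I would establish this comparison, which already yields part~1. Writing $x=\Phi^{-1}(y)$ so that $y=\Phi(x)$, the hypothesis $\Phi-\mathrm{Id}\in C^0_{1-\kappa}$ gives $\big||\Phi(x)|-|x|\big|\le|\Phi(x)-x|\le C\sigma(x)^{1-\kappa}=o(\sigma(x))$. Hence for $|x|$ large enough one has $\tfrac12|x|\le|\Phi(x)|\le 2|x|$, while on the remaining bounded region $\sigma(\Phi(x))$ and $\sigma(x)$ are each bounded above and below by positive constants (using that $\Phi$ maps into $\mathbb{R}^n\backslash\overline{B_{R_2}(0)}$); combining the two regimes gives $\sigma(\Phi(x))\asymp\sigma(x)$, i.e. $\sigma(y)\asymp\sigma(\Phi^{-1}(y))$. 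Then for $u\in C^0_\delta$ we simply estimate $|(Au)(y)|=|u(\Phi^{-1}(y))|\le\|u\|_{C^0_\delta}\,\sigma(\Phi^{-1}(y))^\delta\lesssim\|u\|_{C^0_\delta}\,\sigma(y)^\delta$, which is part~1.

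For part~2 I would first prove $\Phi^{-1}-\mathrm{Id}\in C^m_{1-\kappa}$ by induction on the order of differentiation, since this is the statement we must extract from a hypothesis placed only on $\Phi$. At order zero, writing $x=\Phi^{-1}(y)$ gives $\Phi^{-1}(y)-y=-(\Phi-\mathrm{Id})(x)$, so $|\Phi^{-1}(y)-y|\le C\sigma(x)^{1-\kappa}\asymp C\sigma(y)^{1-\kappa}$ by the comparison above. At first order, $\nabla(\Phi-\mathrm{Id})=O(\sigma^{-\kappa})\to0$, so $\nabla\Phi=I+E$ with $\|E\|\to0$; for $|x|$ large the Neumann series inverts $\nabla\Phi$ and gives $(\nabla\Phi)^{-1}=I+O(\sigma^{-\kappa})$, whence $\nabla\Phi^{-1}(y)=\big[(\nabla\Phi)(\Phi^{-1}(y))\big]^{-1}=I+O(\sigma(y)^{-\kappa})$, matching the weight $1-\kappa-1=-\kappa$ (on the bounded region invertibility and boundedness follow from $\Phi$ being a diffeomorphism). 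For $2\le j\le m$ I would differentiate the identity $\nabla\Phi^{-1}\cdot\big((\nabla\Phi)\circ\Phi^{-1}\big)=I$ and solve for $\nabla^j\Phi^{-1}$; the resulting expression is a universal polynomial in $(\nabla\Phi)^{-1}\circ\Phi^{-1}$ and in $(\nabla^i\Phi)\circ\Phi^{-1}$ for $2\le i\le j$, and the inductive hypothesis together with the weight bounds on $\Phi$ and the comparison $\sigma\circ\Phi^{-1}\asymp\sigma$ show each such term decays at least like $\sigma(y)^{1-\kappa-j}$.

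Finally, with $\Phi^{-1}-\mathrm{Id}\in C^m_{1-\kappa}$ in hand, the boundedness $A:C^m_\delta\to C^m_\delta$ follows from the chain rule. Expanding $\nabla^j(u\circ\Phi^{-1})$ by Fa\`a di Bruno produces terms $(\nabla^l u)(\Phi^{-1}(y))\prod_{s=1}^l\nabla^{i_s}\Phi^{-1}$ with $i_1+\cdots+i_l=j$. Using $|(\nabla^l u)(\Phi^{-1}(y))|\le\|u\|_{C^m_\delta}\,\sigma(\Phi^{-1}(y))^{\delta-l}\asymp\sigma(y)^{\delta-l}$, that $\nabla\Phi^{-1}$ is bounded (weight $0$) and $\nabla^{i}\Phi^{-1}$ has weight $1-\kappa-i$ for $i\ge2$, the product of Jacobian factors carries weight at most $-\sum_{i_s\ge2}(i_s-1)-\kappa\,\#\{i_s\ge2\}\le l-j$, so each term lies in $C^0_{\delta-j}$ and the norm is controlled by a constant depending only on $\|\Phi^{-1}-\mathrm{Id}\|_{C^m_{1-\kappa}}$.

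I expect the main obstacle to be part~2's claim $\Phi^{-1}-\mathrm{Id}\in C^m_{1-\kappa}$: propagating the weight $1-\kappa-j$ through the higher-order derivatives of $\Phi^{-1}$, where one must combine the Neumann-series control of $(\nabla\Phi)^{-1}$, the inductive hypothesis, and the asymptotic comparison $\sigma\circ\Phi^{-1}\asymp\sigma$ without losing the crucial fact that the extra $-\kappa$ per higher-order factor keeps every error term strictly subleading. The bookkeeping in the final chain-rule step is then routine by comparison, the only point to verify being that the weights add up exactly to $\delta-j$.
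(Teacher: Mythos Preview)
Your proposal is correct and runs on the same circle of ideas as the paper: the comparison $\sigma(\Phi(x))\asymp\sigma(x)$ from $\Phi-\mathrm{Id}\in C^0_{1-\kappa}$, the Neumann series for $(\nabla\Phi)^{-1}$, and Fa\`a di Bruno with weight bookkeeping.

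The only noteworthy difference is organizational. For part~2 the paper \emph{interleaves} the two conclusions in a bootstrap: it first proves the general conditional estimate ``if $\Phi^{-1}-\mathrm{Id}\in C^k_{1-\kappa}$ then $A:C^k_\delta\to C^k_\delta$ is bounded'' via Fa\`a di Bruno, and then uses the identity $d\Phi^{-1}=\mathrm{Id}+A\phi$, where $\phi=\sum_{s\ge1}(\mathrm{Id}-d\Phi)^s\in C^{m-1}_{-\kappa}$, to climb from $\Phi^{-1}-\mathrm{Id}\in C^k_{1-\kappa}$ to $C^{k+1}_{1-\kappa}$. You instead establish $\Phi^{-1}-\mathrm{Id}\in C^m_{1-\kappa}$ in one sweep by differentiating $\Phi\circ\Phi^{-1}=\mathrm{Id}$ and solving for $\nabla^j\Phi^{-1}$, and only afterwards prove boundedness of $A$. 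Your route is more hands-on and requires tracking the weights of the polynomial in $(\nabla^i\Phi)\circ\Phi^{-1}$ explicitly; the paper's packaging via $d\Phi^{-1}=\mathrm{Id}+A\phi$ is slightly cleaner in that each bootstrap step is a single application of the operator bound already established at the previous level. Both arguments are equivalent in substance.
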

\begin{proof}
In order to establish the first item, notice the conditions $\Phi-\mathrm{Id}\in C^{0}_{1-\kappa}(\mathbb{R}^n\backslash \overline{B_{R_1}(0)})$ 
implies the existence of a constant $C>0$ such that
\begin{align}\label{EquivDistances.1}
\frac{1}{C} |x| \leq|y(x)|=|\Phi(x)|\leq C|x|.
\end{align}
Thus, for any $x\in \mathbb{R}^n\backslash\overline{B_{R_1}(0)}$ and $f\in  C^{0}_{\delta}(\mathbb{R}^n\backslash \overline{B_{R_1}(0)})$, from (\ref{EquivDistances.1}) one has, with $\sigma$ defined in definition \ref{WeightedSob1}:
\begin{align*}
\sup_{y\in \mathbb{R}^n\backslash\overline{B_{R_2}(0)}}|f\circ\Phi^{-1}(y)|\sigma(y)^{-\delta} \lesssim \sup_{x\in \mathbb{R}^n\backslash\overline{B_{R_1}(0)}}|f(x)|\sigma(x)^{-\delta},
\end{align*}
which proves (\ref{A-HolderProp.1}).

Concerning the second item, first notice that if $\Phi-\mathrm{Id}\in C^{m}_{1-\kappa}(\mathbb{R}^n\backslash\overline{B_{R_1}(0)})$, denoting by $u\doteq \mathrm{Id}-\Phi$, then
\begin{align*}
\Phi^{-1}=\mathrm{Id} + u\circ\Phi^{-1}=\mathrm{Id} + Au.
\end{align*}
Since from (\ref{A-HolderProp.1}) we know that $Au\in C^{0}_{1-\kappa}(\mathbb{R}^n\backslash\overline{B_{R_2}(0)})$, then we have already obtained that
\begin{align}\label{Invariance-C0}
\Phi^{-1}-\mathrm{Id} \in  C^{0}_{1-\kappa}(\mathbb{R}^n\backslash\overline{B_{R_2}(0)}).
\end{align}

Now, if $m\geq 1$, then $d\Phi-\mathrm{Id}\in C^{m-1}_{-\kappa}(\mathbb{R}^n\backslash\overline{B_{R_1}(0)})$ so, given any $\epsilon>0$, there is some $R_{\epsilon}\geq R_1$ such that 
\begin{align*}
\sup_{x\in \mathbb{R}^n\backslash\overline{B_{R_{\epsilon}}}(0)}|d\Phi-\mathrm{Id}|(x)<\epsilon.
\end{align*}
This, in particular, implies that $d\Phi^{-1}$ can be computed in terms of a Neumann series:
\begin{align*}
d\Phi^{-1}=\mathrm{Id} + \sum_{s=1}^{\infty}\varphi^s,
\end{align*}
where $\varphi\doteq \mathrm{Id}-d\Phi\in C^{m-1}_{-{\kappa}}(\mathbb{R}^n\backslash\overline{B_{R_{\epsilon}}}(0))$.  One can quickly show, as in the multiplication rule of proposition \ref{Wkpdeltaalgebra} that $\varphi^s \in C^{m-1}_{-s{\kappa}}(\mathbb{R}^n\backslash\overline{B_{R_{\epsilon}}}(0))$, and in particular that the sequence $\varphi_m\doteq \sum_{s=1}^{m}\varphi^s$ converges in $C^{m-1}_{-{\kappa}}(\mathbb{R}^n\backslash\overline{B_{R_{\epsilon}}}(0))$ to a function $\phi$. That is,
\begin{align}\label{Invariance-TopOrderCritical}
\phi=\sum_{s=1}^{\infty}\varphi^s\in C^{m-1}_{-\kappa}(\mathbb{R}^n\backslash\overline{B_{R_{\epsilon}}}(0)),
\end{align}
and therefore
\begin{align}\label{Invariance-Recursive}
d\Phi^{-1}=\mathrm{Id} + \phi\circ\Phi^{-1}.
\end{align}
But then (\ref{A-HolderProp.1}) again implies $A\phi\in C^{0}_{-\kappa}(\mathbb{R}^n\backslash\overline{B_{R_{2}}}(0))$, so that (\ref{Invariance-Recursive}) implies $\Phi^{-1}-\mathrm{Id}\in C^{1}_{1-\kappa}(\mathbb{R}^n\backslash\overline{B_{R_{2}}}(0))$.

To establish the higher order versions of the above, let us consider first the following more general situation. Let $f\in C^{k}_{\delta}(\mathbb{R}^n\backslash\overline{B_{R_{1}}}(0))$, $\Phi-\mathrm{Id}\in C^{k}_{1-\kappa}(\mathbb{R}^n\backslash\overline{B_{R_{1}}}(0))$ and $\Phi^{-1}-\mathrm{Id}\in C^{k}_{1-\kappa}(\mathbb{R}^n\backslash\overline{B_{R_{1}}}(0))$ for some $k\geq 1$ and any $\delta\in \mathbb{R}$. Let $\alpha=(\alpha_1,\cdots,\alpha_N)$ be a given multi-index, $\alpha_i\in \mathbb{N}_0$, with $k\geq |\alpha|=\sum_{i=1}^{N}\alpha_i\geq 1$, so that we can more explicitly use the Fa\`a di Bruno formula (see \cite[Theorem 11.54]{Leoni}):
\begin{align*}
\partial_{y^{\alpha}}^{|\alpha|}(Af)(y)=\sum^{{|\alpha|}}_{|\beta|=1}C_{\alpha,\beta,\gamma,l}A(\partial_{x^{\beta}}^{|\beta|}f)(y)\prod_{i=1}^{|\beta|}\partial_{y^{\gamma_i}}^{|\gamma_i|}(\Phi^{-1})^{l_i}(y),
\end{align*}
where above the of multi-indices $\gamma_i$, $i=1,\cdots,|\beta|$, must satisfy $|\gamma_i|\geq 1$ and $\sum_{i=1}^{|\beta|}\gamma_i=\alpha$, while $l=(l_1,\cdots,l_{|\beta|})$ with $1\leq l_i\leq n$, $i=1,\cdots,|\beta|$. Then, one sees that
\begin{align*}
\sup_{y\in \mathbb{R}^n\backslash\overline{B_{R_2}(0)}}|\partial^{\alpha}(Af)(y)|\sigma(y)^{-\delta-|\alpha|}&\lesssim \sum^{|\alpha|}_{|\beta|=1}\sup_{y\in \mathbb{R}^n\backslash\overline{B_{R_2}(0)}}|(\partial^{\beta}f)(\Phi^{-1}(y))|\underbrace{\prod_{i=1}^{|\beta|}|\partial^{|\gamma_i|}(\Phi^{-1})^{l_i}(y)|}_{\leq \Vert d\Phi^{-1}\Vert^{|\beta|}_{C^{|\alpha|-1}(\mathbb{R}^n\backslash\overline{B_{R_2}(0)})}}\sigma(y)^{-\delta-|\alpha|},\\
&{\lesssim \Vert d\Phi^{-1}\Vert^{|\alpha|}_{C^{|\alpha|-1}(\mathbb{R}^n\backslash\overline{B_{R_2}(0)})}\sum^{|\alpha|}_{|\beta|=1}\sup_{y\in \mathbb{R}^n\backslash\overline{B_{R_2}(0)}}|(\partial^{\beta}f)(\Phi^{-1}(y))|\sigma(y)^{-\delta-|\alpha|}},\\
&\lesssim \Vert d\Phi^{-1}\Vert^{|\alpha|}_{C^{|\alpha|-1}(\mathbb{R}^n\backslash\overline{B_{R_2}(0)})}\sum^{|\alpha|}_{|\beta|=1}\sup_{x\in \mathbb{R}^n\backslash\overline{B_{R_1}(0)}}|(\partial^{\beta}f)(x)|\sigma(x)^{-\delta-|\alpha|}.
\end{align*}
Putting the above inequality for any $1\leq |\alpha|\leq k$ with the zero order case, we find that there is a fixed constant $C>0$, naturally depending on the $C^{k-1}$-norm of $d\Phi^{-1}$, such that for all $ f\in C^{m}_{\delta}(\mathbb{R}^n\backslash\overline{B_{R_1}(0)})$:
\begin{align}\label{A-continuity}
\Vert Af\Vert_{C^{k}_{\delta}(\mathbb{R}^n\backslash\overline{B_{R_2}(0)})}\leq C \Vert f\Vert_{C^{k}_{\delta}(\mathbb{R}^n\backslash\overline{B_{R_1}(0)})}.
\end{align}

We apply the above result, in particular, to $\phi$ in (\ref{Invariance-Recursive}) to obtain the following chain of implications:
\begin{align}
\begin{split}
\text{If } \phi\in C^{k}_{-\kappa}(\mathbb{R}^n\backslash\overline{B_{R_1}(0)}) \text{ and } \Phi^{-1}-\mathrm{Id}\in C^{k}_{1-\kappa}(\mathbb{R}^n\backslash\overline{B_{R_2}(0)})&\Longrightarrow A\phi\in C^{k}_{-\kappa}(\mathbb{R}^n\backslash\overline{B_{R_2}(0)}),\\
&\Longrightarrow \Phi^{-1}-\mathrm{Id}\in C^{k+1}_{1-\kappa}(\mathbb{R}^n\backslash\overline{B_{R_2}(0)})
\end{split}
\end{align}
Since the analysis below (\ref{Invariance-Recursive}) establishes the premises of the above implication for $k=0$, we may then iterate until $k=m-1$, since from (\ref{Invariance-TopOrderCritical}) $\phi\in C^{m-1}_{-\kappa}(\mathbb{R}^n\backslash\overline{B_{R_1}(0)})$ is the best possible a priori control for $\phi$. Thus, we find
\begin{align}
\Phi^{-1}-\mathrm{Id}\in C^{m}_{1-\kappa}(\mathbb{R}^n\backslash\overline{B_{R_2}(0)}).
\end{align}
Putting this together with (\ref{A-continuity}), we find (\ref{A-HolderProp.2}).
\end{proof}

\begin{cor}\label{SobolevInvariance-Coro1}
Consider the setting described above for a coordinate change. Given $\kappa>0$, $m\geq 1$ and $\delta\in \mathbb{R}$, $\Phi-\mathrm{Id}\in C_{1-\kappa}^{m}(\mathbb{R}^n\backslash \overline{B_{R_1}(0)})$, then $\Phi^{-1}-\mathrm{Id}\in C_{1-\kappa}^{m}(\mathbb{R}^n\backslash \overline{B_{R_2}(0)})$ and moreover, for any $1<p<\infty$, the operator
\begin{align}\label{A-SobolevProp1}
A:W_{\delta}^{m,p}(\mathbb{R}^n\backslash \overline{B_{R_1}(0)})\to W_{\delta}^{m,p}(\mathbb{R}^n\backslash \overline{B_{R_2}(0)}) \text{ is bounded }.
\end{align}
\end{cor}
\begin{proof}
The implication $\Phi^{-1}-\mathrm{Id}\in C_{1-\kappa}^{m}(\mathbb{R}^n\backslash \overline{B_{R_2}(0)})$ follows from Lemma \ref{ChangeOfCoordinatesHolderLemma}, and then (\ref{A-SobolevProp1}) follows from Lemma \ref{ChangeOfCoordinatesLemma}.
\end{proof}

\begin{cor}\label{SobolevInvariance-Coro2}
Consider the setting described above for a coordinate change. Given $\kappa>0$, $m\geq 1$ and $\delta\in \mathbb{R}$, $\Phi-\mathrm{Id}\in W_{1-\kappa}^{m+1,q}(\mathbb{R}^n\backslash \overline{B_{R_1}(0)})$ for some $q>n$, then $\Phi^{-1}-\mathrm{Id}\in W_{1-\kappa}^{m+1,q}(\mathbb{R}^n\backslash \overline{B_{R_2}(0)})$ and moreover, for any $1<p<\infty$, the operator
\begin{align}\label{A-SobolevProp2}
A:W_{\delta}^{m,p}(\mathbb{R}^n\backslash \overline{B_{R_1}(0)})\to W_{\delta}^{m,p}(\mathbb{R}^n\backslash \overline{B_{R_2}(0)}) \text{ is bounded }.
\end{align}
\end{cor}
\begin{proof}
Since $q>n$, then $W_{1-{\kappa}}^{m+1,q}(\mathbb{R}^n\backslash \overline{B_{R_1}(0)})\hookrightarrow C^{m}_{1-{\kappa}}(\mathbb{R}^n\backslash \overline{B_{R_1}(0)})$, and thus Corollary \ref{SobolevInvariance-Coro1} implies both $\Phi^{-1}-\mathrm{Id}\in C_{1-{\kappa}}^{m}(\mathbb{R}^n\backslash \overline{B_{R_2}(0)})$ and (\ref{A-SobolevProp2}). The extra Sobolev regularity for $\Phi^{-1}-\mathrm{Id}$ in this case comes from the fact that the sequence $\varphi_m$ in (\ref{Invariance-TopOrderCritical}) is now Cauchy in $W^{m,q}_{-{\kappa}}(\mathbb{R}^n\backslash\overline{B}_{R_{\epsilon}(0)})$, $R_{\epsilon}$ sufficiently large. This is a consequence of the fact that $W^{m,q}_{-{\kappa}}(\mathbb{R}^n\backslash\overline{B_{R_1(0)}})$ is an algebra under multiplication due to Lemma \ref{MultiplicationPropertyGeneral} and $q>n$. Therefore $\phi\in W^{m,q}_{-{\kappa}}(\mathbb{R}^n\backslash\overline{B}_{R_{\epsilon}(0)})$ and
\begin{align*}
d\Phi^{-1}=\mathrm{Id} + \phi\circ\Phi^{-1}=\mathrm{Id} + A\phi
\end{align*}
Then, (\ref{A-SobolevProp2}) implies $A\phi\in W^{m,q}_{-{\kappa}}(\mathbb{R}^n\backslash\overline{B_{R_y}(0)})$. That is, $d\Phi^{-1}-\mathrm{Id} \in W^{m,q}_{-{\kappa}}(\mathbb{R}^n\backslash\overline{B_{R_y}(0)})$. Since $\Phi^{-1}-\mathrm{Id} \in L^{q}_{1-{\kappa}}(\mathbb{R}^n\backslash\overline{B}_{R_2}(0))$ is given from Lemma \ref{ChangeOfCoordinatesLemma}, then $\Phi^{-1}-\mathrm{Id} \in W^{m+1,q}_{1-{\kappa}}(\mathbb{R}^n\backslash\overline{B}_{R_2}(0))$, which finishes the proof. 
\end{proof}

{The above results, in particular Corollary \ref{SobolevInvariance-Coro2}, will be important in establishing the following theorem about the existence of harmonic coordinate at infinity, specially to be able to prove that controls in a given a priori structure of infinity are preserved when changing coordinate to a harmonic structure of infinity.}

\begin{theo}\label{HarmonicCoordThm}
Let $(M,g)$ be a smooth AE manifold, with $g$ a $W^{k,q}_{-\tau}$-AE metric, $k\geq 1$, $q>n$, with respect to a structure of infinity $\Phi_x :\, M\backslash K\rightarrow  \mathbb{R}^n\backslash\overline{B_{R_x}(0)}$ where $K\subset\subset M$, with $1-\tau\not\in \mathbb{Z}\backslash\{-1,-2,\cdots,3-n\}$,\footnote{The integer values highlighted in the set $\mathbb{Z}\backslash\{-1,-2,\cdots,3-n\}$ are referred to as \emph{exceptional}, and are particular weights for which $\Delta_g$ loses Fredholm properties. For further details we refer the reader to \cite{MR849427}.} 
 and fix $1<\eta<2$. There are functions $y^{i}\in W^{k+1,q}_{\eta}$, $i=1,\cdots,n$, such that $\Delta_gy^i=0$ and $(x^i-y^i)\in W^{k+1,q}_{1-\tau^{*}}(\mathbb{R}^n\backslash\overline{{B_{R_x}(0)})}$ for $\tau^{*}\doteq \min\{\tau,n-2\}$.
As a result there exists a structure of infinity given by these harmonic coordinates $\Phi_y : \, M \backslash K' \rightarrow \mathbb{R}^n \backslash \overline{B_{R_y}(0)}$  where $K'\subset\subset M$ in which $(M,g)$ is $W^{k,q}_{-\tau}$-AE  and such that any tensor $T \in W^{l,p}_{\delta}( M, \Phi_x)$, $0 \le l \le k$, ${1<p\leq q}$, {$\delta\in \mathbb{R}$}  also satisfies $T \in W^{l,p}_{\delta}(M, \Phi_y)$. 
\end{theo}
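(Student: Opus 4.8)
The plan is to construct the harmonic coordinates $y^i$ by solving $\Delta_g y^i = 0$ as a perturbation of the given Euclidean-asymptotic coordinates $x^i$, and then to show that the resulting change of charts satisfies the hypotheses of Corollary \ref{SobolevInvariance-Coro2}, so that weighted Sobolev controls are preserved under the switch. First I would set $y^i = x^i + u^i$ and compute $\Delta_g x^i = g^{ab}\partial_{ab}x^i - g^{ab}\Gamma^c_{ab}\partial_c x^i = -g^{ab}\Gamma^i_{ab}$. Because $g-\delta \in W^{k,q}_{-\tau}$, the Christoffel symbols lie in $W^{k-1,q}_{-\tau-1}$, and Lemma \ref{MultiplicationPropertyGeneral} (with $q>n$) gives $\Delta_g x^i \in W^{k-1,q}_{-\tau-1}(M,\Phi_x)$. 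Thus $u^i$ must solve $\Delta_g u^i = -\Delta_g x^i =: h^i \in W^{k-1,q}_{-\tau-1}$. Here I would invoke the Fredholm theory for $\Delta_g$ on weighted spaces (Proposition \ref{regularitymaxwell} and the surrounding Bartnik theory \cite{MR849427}): for the non-exceptional weight $1-\tau^{*}$ with $\tau^{*}=\min\{\tau,n-2\}$, the source $h^i$ lies in the appropriate range space, and one solves for $u^i \in W^{k,q}_{1-\tau^{*}}(M,\Phi_x)$. The condition $1-\tau\notin\mathbb{Z}\setminus\{-1,\dots,3-n\}$ is exactly what guarantees we avoid exceptional weights where $\Delta_g$ loses surjectivity onto the relevant space. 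This yields $x^i-y^i = -u^i \in W^{k,q}_{1-\tau^{*}}$, and since $1<\eta<2$ with $\eta$ above the decay rate of $u^i$, the $y^i$ themselves lie in $W^{k,q}_{\eta}$ (growing linearly like $x^i$ plus a decaying correction).

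Next I would verify that $\Phi_y \doteq \Phi_y\circ(\text{coordinate map})$ defines a genuine structure of infinity: the map $x\mapsto y(x)$ has $y - \mathrm{Id} = u \in W^{k,q}_{1-\tau^{*}}$, and since $q>n$ the Sobolev embedding gives $u \in C^{k-1}_{1-\tau^{*}}$, hence $d y = \mathrm{Id} + du$ with $du$ small and decaying at infinity. After discarding a large enough compact core $K'$, the differential $dy$ is uniformly invertible, so $\Phi_y$ is a diffeomorphism onto $\mathbb{R}^n\setminus\overline{B_{R_y}(0)}$ for suitable $R_y$. The fact that $g$ remains $W^{k,q}_{-\tau}$-AE in the $y$-chart then follows by applying the pull-back estimate: writing $\Phi := \Phi_y\circ\Phi_x^{-1}$ for the transition map on Euclidean ends, we have $\Phi-\mathrm{Id} = u\circ\Phi_x^{-1} \in W^{k,q}_{1-\tau^{*}} \subset W^{k,q}_{1-\kappa}$ with $\kappa \doteq \tau^{*}>0$, so Corollary \ref{SobolevInvariance-Coro2} (with $m+1 = k$, hence applied at regularity order $m=k-1$) shows the metric components transform between $W^{k-1,p}_\delta$-classes boundedly; combined with the tensorial transformation law for $g$ and the Leibniz/chain-rule control on the Jacobian, one checks $(\Phi_y^{-1})^{*}g - \delta \in W^{k,q}_{-\tau}$.

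The final assertion — that any tensor $T\in W^{l,p}_{\delta}(M,\Phi_x)$, $0\le l\le k$, $1<p\le q$, remains in $W^{l,p}_{\delta}(M,\Phi_y)$ — is precisely the content of Corollary \ref{SobolevInvariance-Coro2} applied to the transition diffeomorphism $\Phi$. Having established $\Phi-\mathrm{Id}\in W^{k,q}_{1-\kappa}$ with $q>n$, the corollary gives boundedness of the coordinate-change operator $A$ on $W^{l,p}_\delta$ for every $1<p<\infty$ and $0\le l\le k-1$; the top order $l=k$ is handled by the same argument at the critical regularity, using that $\Phi-\mathrm{Id}$ carries $k$ Sobolev derivatives and the Fa\`a di Bruno expansion in Lemma \ref{ChangeOfCoordinatesHolderLemma} only ever requires $\|d\Phi^{-1}\|_{C^{l-1}}$, which is finite up to $l=k$. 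Since tensors transform by contracting with finitely many copies of the Jacobian $d\Phi$ and $d\Phi^{-1}$, all of which are controlled in $C^{k-1}$ by the embedding $W^{k,q}_{1-\kappa}\hookrightarrow C^{k-1}_{1-\kappa}$, the tensorial and scalar statements coincide after the multiplication rule of Proposition \ref{Wkpdeltaalgebra}.

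The main obstacle I expect is the bookkeeping at the \emph{top order} $l=k$: the change-of-coordinates lemmas are cleanly stated at order $m$ requiring $\Phi-\mathrm{Id}\in C^m_{1-\kappa}$, yet the embedding only yields $C^{k-1}$-control from $W^{k,q}$, so one must argue directly in the Sobolev category at order $k$ rather than passing through Hölder spaces. This is exactly why Corollary \ref{SobolevInvariance-Coro2} is phrased with an extra derivative ($W^{m+1,q}$ hypothesis) and why establishing $\Phi^{-1}-\mathrm{Id}\in W^{k,q}_{1-\kappa}$ via the Neumann-series/algebra argument (rather than merely $C^{k-1}$) is essential; the delicate point is confirming that the solution regularity $u\in W^{k,q}_{1-\tau^{*}}$ coming from elliptic theory genuinely feeds back at full order $k$ into the transition map, and that no half-derivative is lost in matching the weight $\tau^{*}$ against the exceptional-set constraint on $1-\tau$.
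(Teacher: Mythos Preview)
Your overall architecture matches the paper's proof exactly: solve $\Delta_g u^i = -\Delta_g x^i$ in weighted Sobolev spaces, show $y=x+u$ is a diffeomorphism near infinity, and invoke Corollary~\ref{SobolevInvariance-Coro2} to transport Sobolev controls through the transition map. The construction of $u^i$, the diffeomorphism check via $du=o(|x|^{-\tau^*})$, and the final tensor-transformation argument via Lemma~\ref{MultiplicationPropertyGeneral} are all as in the paper.

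There is, however, one genuine gap, and it is precisely the obstacle you flag in your final paragraph without resolving. You obtain only $u^i\in W^{k,q}_{1-\tau^*}$, then apply Corollary~\ref{SobolevInvariance-Coro2} with $m+1=k$, which covers $0\le l\le k-1$ cleanly but leaves $l=k$ dangling. Your proposed fix --- that the Fa\`a di Bruno expansion only needs $\|d\Phi^{-1}\|_{C^{l-1}}$, finite up to $l=k$ --- does not work: from $\Phi^{-1}-\mathrm{Id}\in W^{k,q}_{1-\kappa}$ the Sobolev embedding (with $q>n$) gives only $C^{k-1}_{1-\kappa}$-control on $\Phi^{-1}$, hence $C^{k-2}$-control on $d\Phi^{-1}$, which is one order short of what the $|\alpha|=k$ term in Fa\`a di Bruno requires.

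The paper closes this gap by observing that elliptic regularity actually yields one \emph{extra} derivative: since $g$ is $W^{k,q}_{-\tau}$-AE, it is in particular $W^{(k+1)-1,q}_{-\tau}$-AE, and the source $\Gamma^i\in W^{k-1,q}_{-\tau-1}=W^{(k+1)-2,q}_{-\tau-1}\subset W^{(k+1)-2,q}_{-\tau^*-1}$. Hence Lemma~\ref{lemregularite} applied at level $k+1$ (with $v^i\in L^q_{1-\tau^*}$ from the initial $W^{2,q}$ solution) gives $u^i\in W^{k+1,q}_{1-\tau^*}$. With this, $\Phi-\mathrm{Id}\in W^{k+1,q}_{1-\tau^*}$ and Corollary~\ref{SobolevInvariance-Coro2} applies with $m+1=k+1$, i.e.\ $m=k$, delivering boundedness of $A$ on $W^{l,p}_\delta$ for the full range $0\le l\le k$ and simultaneously the $W^{k+1,q}$-control on $\Phi^{-1}-\mathrm{Id}$. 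No separate top-order argument is needed.
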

\begin{proof}
Let us first extend the functions $x^i$ smoothly to all of $M$. Then, near infinity, $\Delta_gx^i=g^{kl}\Gamma^i_{kl}\doteq \Gamma^i\in W^{k-1,q}_{-1-\tau}(M, \Phi_x)$. Notice that if $1-\tau>2-n$, then Proposition \ref{regularitymaxwell} implies the existence of $v^i\in W^{2,q}_{1-\tau}(M, \Phi_x)$ solving $\Delta_gv^i=\Gamma^i$. Now, if $1-\tau< 2-n$ (the equality case is an exceptional case), we cannot, a priori, improve the decay given by $|x|^{2-n}$. In any case, we know that there are solutions $v^i\in W^{2,q}_{1-\tau^{*}}(M, \Phi_x)$ to:
\begin{align}\label{BootstrapEq.1}
\Delta_gv^i=\Gamma^i,
\end{align}
where $\tau^{*}=\min\{\tau,n-2\}$ and $\Gamma^i\in W^{k-1,q}_{-1-\tau}(M, \Phi_x)$. Therefore, there is some $v^i\in W^{2,q}_{1-\tau^{*}}(M, \Phi_x)$ satisfying $\Delta_g(x^i-v^i)=0$. Defining $y^i\doteq x^i-v^i$, Lemma \ref{lemregularite} implies $y^i-x^i\in W^{k+1,q}_{1-\tau^{*}}(M, \Phi_x)$. Let us define  $\Phi_y = (y^1, \dots, y^n ) : \, M \backslash K \rightarrow E_y\doteq \R^n\backslash\overline{B_{R_y}(0)}$, and $y(x)=\Phi_y \circ \Phi_x^{-1}$, $\nu^i(x) =v^i \circ \Phi_x^{-1} $ and $\nu = (\nu^1, \dots , \nu^n)$.

By definition $y^i= x^i-\nu^i$, and since $\nabla \nu\in W^{k,q}_{-\tau^*}(\R^n \backslash \overline{ B_{R_x}(0)})$, by Sobolev embeddings $\frac{\partial y^i}{\partial x^j}=\delta^i_j + o(|x|^{-\tau^{*}})$. Thus $y(x)$ is a diffeomorphism for $|x|$ sufficiently large. That is, there is some $R_x>0$ such that $\Phi_y\circ\Phi^{-1}_x: E_x\doteq \mathbb{R}^n\backslash\overline{B_{R_x}(0)}\to \Phi_y\circ\Phi^{-1}_x(E_x)\subset \mathbb{R}^n$ is a diffeomorphism. Notice also that $y-x\in W^{k+1,q}_{1-\tau^{*}}(\Phi_x)\hookrightarrow C^{k-1}_{1-\tau^{*}}(E_x)$, $k-1\geq 0$, and therefore there is a constant $C>0$ such that
\begin{align}\label{EquivDistances}
C^{-1}|x|\leq |y(x)|=|\Phi_y\circ \Phi^{-1}_{x}(x)|\leq C|x|.
\end{align}
This implies that the diffeomorphism $\Phi_y\circ \Phi^{-1}_{x}$ maps a subset of $\mathbb{R}^n\backslash\overline{B_{R_x}(0)}$ onto $\mathbb{R}^n\backslash\overline{B_{R_y}(0)}$ for some $R_y>0$. Hence, there is some compact set $K_y\subset M$, such that $\Phi_y:E_y\doteq M\backslash K_y\to \mathbb{R}^n\backslash \overline{B_{R_y}(0)}$, and therefore $\Phi_y$ provides a \emph{harmonic structure of infinity for $M$}. Furthermore, since the diffeomorphism $\Phi_y\circ\Phi^{-1}_x-\mathrm{Id}=\nu\in W^{k+1,q}_{1-\tau^{*}}(\mathbb{R}^n\backslash\overline{B_{R_x}(0)})$, then Corollary \ref{SobolevInvariance-Coro2} shows that $\Phi_x\circ\Phi^{-1}_y-\mathrm{Id}\in W^{k+1,q}_{1-\tau^{*}}(\mathbb{R}^n\backslash\overline{B_{R_y}(0)})$.

Finally, to establish that for any tensor field $T \in W^{l,p}_{\delta}( M, \Phi_x)$, $0 \le l \le k$, ${1<p\leq q}$ it holds that $T \in W^{l,p}_{\delta}(M, \Phi_y)$, we need only apply the coordinate transformation rule, together with the observations (which all follow from Corollary \ref{SobolevInvariance-Coro2}) $d(\Phi_x\circ\Phi^{-1}_y)-\mathrm{Id}\in W^{k,q}_{-\tau^{*}}(\mathbb{R}^n\backslash\overline{B_{R_y}(0)})$, $(d(\Phi_y\circ\Phi^{-1}_x))\circ\Phi_x\circ\Phi^{-1}_y -\mathrm{Id}\in W^{k,q}_{-\tau^{*}}(\mathbb{R}^n\backslash\overline{B_{R_y}(0)})$ and $(T\circ\Phi^{-1}_x)\circ(\Phi_x\circ\Phi^{-1}_y)\in W^{l,{p}}_{\delta}(\mathbb{R}^n\backslash\overline{B_{R_y}(0)})$. Then, since $q>n$ and {$p\leq q$}, then ${W^{k,q}_{-\tau^{*}}\otimes W^{l,{p}}_{\delta}\hookrightarrow W^{l,{p}}_{\delta}}$ for all $0\leq l\leq k$ and all $\delta\in \mathbb{R}$, both properties due to Lemma \ref{MultiplicationPropertyGeneral}. Thus, for example, if $T$ is a $(0,2)$-tensor field, then applying the coordinate transformation rule to any tensor field $T\in W^{l,{p}}_{\delta}(M,\Phi_x)$ we find
\begin{align*}
T(\partial_{y^i},\partial_{y^j})&=\partial_{y^i}(\Phi_x\circ\Phi^{-1}_y)^a\partial_{y^j}(\Phi_x\circ\Phi^{-1}_y)^bT(\partial_{x^a},\partial_{x^b})\circ(\Phi_x\circ\Phi^{-1}_y).
\end{align*}
From the above discussion we know that $\partial_{y^i}(\Phi_x\circ\Phi^{-1}_y)^a-\mathrm{Id}\in W^{k,{p}}_{-\tau^{*}}(\mathbb{R}^n\backslash\overline{B_{R_y}(0)})$, while the composition $T(\partial_{x^a},\partial_{x^b})\circ(\Phi_x\circ\Phi^{-1}_y)\in W^{l,{p}}_{\delta}(\mathbb{R}^n\backslash\overline{B_{R_y}(0)})$ from Corollary \ref{SobolevInvariance-Coro2}. Then, since $q>n$, Lemma \ref{MultiplicationPropertyGeneral} implies $T(\partial_{y^i},\partial_{y^j})\in W^{l,{p}}_{\delta}(\mathbb{R}^n\backslash\overline{B_{R_y}(0)})$.
\end{proof}

\section{ $\mathcal{E}(g)$ as a function of the $J$-Einstein tensor, proof of Theorem \ref{HerzlichThmIntro}}


Let us start by introducing a few preliminary results and establishing some useful notations. First, using the same notations as in Definition \ref{AEManifolds2}, let us introduce a smooth metric $e$ on our manifold $M$ such that, on each end $N_i=\Phi_i(\mathbb{R}^n\backslash\overline{B_1(0)})$, $e_{kl}={\Phi_i^{-1}}^{*}\delta_{kl}$, where $\delta$ stands for the Euclidean metric on $\mathbb{R}^n\backslash\overline{B_1(0)}$. Consider $n\geq 3$, $k\geq 4$ and $k>\frac{n}{p}$ and define the set
\begin{align}
\begin{split}
\mathcal{M}^{k,p}_{-\tau}(M)\doteq \{ g\in W^{k,p}_{loc}(M;S_2M) \: : \: g-e\in W^{k,p}_{-\tau}(M;S_2M) \text{ and } g>0 \}.
\end{split}
\end{align}

We look at $\mathcal{M}^{k,p}_{-\tau}$ as an affine space where $g\in\mathcal{M}^{k,p}_{-\tau}$ is given by $g=e+h$, with $h\in W^{k,p}_{-\tau}$. Thus, the map
\begin{align*}
\mathcal{M}^{k,p}_{-\tau}&\to W^{k,p}_{-\tau},\\
g&\mapsto h=g-e
\end{align*}
is a one to one linear map, and we identify $\mathcal{M}^{k,p}_{-\tau}$ with its image under this map in $W^{k,p}_{-\tau}$. Notice that these \emph{restrictions} on the allowable metrics are only restrictions at infinity, which, just as in Definition \ref{AEmanifolds}, concern the behaviour of $g$ near infinity in our chosen end coordinates. Let us now recall some formulae for geometric objects, in particular for the Ricci tensor which can be written as (see \cite{MR2473363}):
\begin{align}\label{gauge0}
\begin{split}
R_{ij}(g)&=-\frac{1}{2}g^{ab}D_{a}D_{b}g_{ij}+f_{ij}(g,Dg)+\frac{1}{2}(g_{ia}D_{j}F^{a}+g_{ja}D_{i}F^{a}),\\
F^{a}&\doteq g^{kl}(\Gamma^{a}_{kl}(g)-\Gamma^{a}_{kl}(e))\doteq g^{kl}S^{a}_{kl},\;\; ; \;\; S^{a}_{kl}\doteq \frac{g^{ab}}{2}(D_{k}g_{bl} + D_{l}g_{bk} - D_{b}g_{kl}).
\end{split}
\end{align}
where $D$ stands for the $e$-covariant derivative, $\Gamma$ for the corresponding Christoffel symbols and
\begin{align}\label{f-tensor}
\begin{split}
\!\!f_{ij}&\doteq   - \frac{1}{2}\big\{  D_{i}g^{ab}D_{a}g_{b j}  + D_{j}g^{ab}D_{a}g_{b i} \}  + \frac{1}{2}F^{a}D_{a}g_{ij} - S^{a}_{b i}S^{b}_{a j} - \frac{1}{2}g^{kl} \big\{ R^{a}_{kl i}(e)g_{a j}  + R^{a}_{kl j}(e) g_{a i} \big\}.
\end{split}
\end{align}
Above, $R^{i}_{jkl}\doteq dx^{i}(R(\partial_k,\partial_l)\partial_j)$, where $R(X,Y)Z=\nabla_{X}\nabla_{Y}Z - \nabla_{Y}\nabla_{X}Z - \nabla_{[X,Y]}Z$, $X,Y,Z\in\Gamma(TM)$, denotes the curvature tensor, establishing our curvature conventions. Then, we can see $\mathrm{Ric}$ as a map on $\mathcal{M}^{k,p}_{-\tau}$, $g\cong h\mapsto \mathrm{Ric}_g$, given by
\begin{align*}
\mathrm{Ric}_g&=-\frac{1}{2}{e^{-1}}_{\cdot}D^2 h - \frac{1}{2}(g^{-1} - e^{-1})_{\cdot}D^2 h + f(h,Dh)+\frac{1}{2}(e_{\cdot}DF + e_{\cdot}DF) + \frac{1}{2}(h_{\cdot}DF + h_{\cdot}DF),
\end{align*}
where, given two tensors $T$ and $T'$, we have used the notation $T_{\cdot}T'$ to denote an arbitrary contraction, which are understood from (\ref{gauge0}) in our case. Notice that, using similar manipulations to those presented above, the tensor fields $S$ and $F$ appearing in (\ref{gauge0}) can be seen as maps on $\mathcal{M}^{k,p}_{-\tau}$, $h\mapsto S_g,F_g$.

\begin{lem}\label{QcurvFrechReg}
Let $(M^n,g)$ be a $W^{k,p}_{-\tau}$-AE manifold with $n\geq 3$, $p>1$, $k\geq4$ and $k>\frac{n}{p}+2$. Then, the $Q$-curvature defines a smooth non-linear map 
\begin{align}
\begin{split}
Q:\mathcal{M}^{k,p}_{-\tau}(M)\to W^{k-4,p}_{-\tau-4}(M). 
\end{split}
\end{align}
\end{lem}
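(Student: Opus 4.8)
The plan is to decompose $Q_g$ into its three constituent pieces $\Delta_g R_g$, $|\mathrm{Ric}_g|^2_g$ and $R_g^2$, to show that each defines a smooth map into a weighted Sobolev space contained in $W^{k-4,p}_{-\tau-4}(M)$, and then to add them with the constant coefficients of (\ref{Qgintro}). The whole argument rests on three elementary building blocks. First, for each $j$ the background derivative $D^j:W^{l,p}_{\delta}\to W^{l-j,p}_{\delta-j}$ is bounded and linear, hence smooth. Second, by Lemma \ref{MultiplicationPropertyGeneral} (applied with $p=q$, which is admissible since $k-2>\frac{n}{p}$) the multiplication $W^{k-2,p}_{\delta_1}\otimes W^{k-2,p}_{\delta_2}\to W^{k-2,p}_{\delta}$ is continuous and bilinear for any $\delta>\delta_1+\delta_2$, hence smooth; in particular $W^{k-2,p}_{\delta}$ is a Banach algebra for $\delta<0$. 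Third, the pointwise inversion $g\mapsto g^{-1}$ is smooth as a map $\mathcal{M}^{k,p}_{-\tau}\to e^{-1}+W^{k,p}_{-\tau}$: writing $g=e+h$, the map $h\mapsto g^{-1}-e^{-1}$ is the composition with the smooth matrix-valued function $A\mapsto (e+A)^{-1}-e^{-1}$, which vanishes at $A=0$; since $W^{k,p}_{-\tau}\hookrightarrow C^0_{-\tau}$ (as $k>\frac{n}{p}$), the values of $g$ stay in a fixed compact subset of the positive-definite cone, and the Banach-algebra structure makes the associated substitution operator smooth into $W^{k,p}_{-\tau}$.

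With these at hand, I first treat the Ricci and scalar curvatures. Reading off the structure of (\ref{gauge0})--(\ref{f-tensor}), $\mathrm{Ric}_g$ is a finite sum of contractions of factors of $g^{-1}$ with the second-order term $D^2 h\in W^{k-2,p}_{-\tau-2}$, with quadratic first-order expressions of the type $Dh\cdot Dh\in W^{k-1,p}_{-2\tau-2}$, and with the background curvature of $e$; the latter vanishes on the ends (where $e$ is flat) and is therefore harmless. Since $-2\tau-2<-\tau-2$ for $\tau>0$, all lower-order contributions embed into the weight of the leading term, so the three building blocks show that $\mathrm{Ric}:\mathcal{M}^{k,p}_{-\tau}\to W^{k-2,p}_{-\tau-2}$ is smooth. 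Contracting once more with $g^{-1}$ and using the algebra property gives $R_g=g^{ij}\mathrm{Ric}_{ij}\in W^{k-2,p}_{-\tau-2}$ smoothly.

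It then remains to handle the three terms of $Q_g$. The genuinely fourth-order term is $\Delta_g R_g$: expanding $\Delta_g u=g^{ij}D_iD_j u-F^k D_k u$ with $u=R_g$, and using $D^2R_g\in W^{k-4,p}_{-\tau-4}$, $F\in W^{k-1,p}_{-\tau-1}$ and $DR_g\in W^{k-3,p}_{-\tau-3}$, the multiplication property places $\Delta_g R_g\in W^{k-4,p}_{-\tau-4}$; this is where exactly four derivatives and four units of weight are consumed, which is why the target cannot be improved. For the curvature-squared terms, two copies of $\mathrm{Ric}_g$ (resp. of $R_g$) contracted with $g^{-1}$ give elements of $W^{k-2,p}_{-2\tau-4}\subset W^{k-4,p}_{-\tau-4}$, once more using $\tau>0$. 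Adding the three pieces yields the smooth map $Q:\mathcal{M}^{k,p}_{-\tau}\to W^{k-4,p}_{-\tau-4}$.

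The only genuinely delicate point is the smoothness (rather than mere boundedness) of the nonlinear operator, which reduces to the smoothness of the inversion substitution operator described above; everything else is the continuous multilinearity of products and the linearity of differentiation. The hypotheses are used precisely here: $k>\frac{n}{p}+2$ guarantees that the space $W^{k-2,p}_{\cdot}$ carrying $\mathrm{Ric}_g$ and $R_g$ is an algebra so that all products close, $k\ge 4$ leaves room for the four derivatives in $\Delta_g R_g$, and $\tau>0$ is exactly what makes every quadratic remainder decay faster than the weight of the leading term and hence embed into the claimed target space.
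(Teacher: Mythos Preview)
Your proof is correct and follows essentially the same approach as the paper's own argument: both first establish the mapping properties by tracking $\mathrm{Ric}_g,\,R_g\in W^{k-2,p}_{-\tau-2}$, then $|\mathrm{Ric}_g|^2_g,\,R_g^2\in W^{k-2,p}_{-\tau-4}$ and $\Delta_gR_g\in W^{k-4,p}_{-\tau-4}$, and then deduce smoothness from the three ingredients you list (bounded linear differentiation, continuous bilinear multiplication via Lemma~\ref{MultiplicationPropertyGeneral}, and smoothness of $g\mapsto g^{-1}$). The paper cites \cite{AMR-Book} and \cite{GirbauBruna} for the abstract Fr\'echet-calculus facts where you argue them directly, but the structure and the use of the hypotheses $k>\tfrac{n}{p}+2$, $k\ge 4$, $\tau>0$ are identical.
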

\begin{proof}
Appealing to (\ref{gauge0})-(\ref{f-tensor}) and Definition \ref{AEmanifolds}, since $D g\in W^{k-1,p}_{-\tau-1}$ and Lemma \ref{MultiplicationPropertyGeneral} shows that $S\in W^{k-1,p}_{-\tau-1}$, we find  that $F	\in W^{k-1,p}_{-\tau-1}$. Then, since $W^{k-1,p}_{-\tau-1}\otimes W^{k-1,p}_{-\tau-1}\hookrightarrow W^{k-1,p}_{-\tau-2}$ due to Lemma \ref{MultiplicationPropertyGeneral}, we find $\mathrm{Ric}_g\in W^{k-2,p}_{-\tau-2}$. This, in turn, directly implies that $R_g\in W^{k-2,p}_{-\tau-2}$ and that $|\mathrm{Ric}_g|^2_g,R^2_g\in W^{k-2,p}_{-\tau-4}$. Finally, since $\Delta_gR_g\in W^{k-4,p}_{-\tau-4}$, we see that $Q_g\in W^{k-4,p}_{-\tau-4}$. 

Concerning the Frechét regularity of the map $Q:\mathcal{M}^{k,p}_{-\tau}\to W^{k-4,p}_{-\tau-4}(M)$, let us appeal to some basic results, such as those established in \cite[Chapter 2]{AMR-Book}. First, notice that since bounded linear maps are smooth maps between normed spaces, we know that $h\mapsto D^{l}g=D^lh$ is a smooth map from $W^{k,p}_{-\tau}\to W^{k-l,p}_{-\tau-l}$, $0\leq l\leq k$. Also, concerning non-linear maps, our regularity $k>\frac{n}{p}+2$ and $\tau>0$ allow us to use classic pointwise expressions and multiplication properties of Sobolev spaces to guarantee that $g\mapsto g^{-1}$ is a smooth map from $\mathcal{M}^{k,p}_{-\tau}\to {\mathcal{M}^{*}}^{k,p}_{-\tau}$.\footnote{See, for instance, \cite[Chapter III]{GirbauBruna} for the detailed computations.}${}^{,}$\footnote{We are using the notation ${\mathcal{M}^{*}}^{k,p}_{-\tau}$ to define the analogous space of \emph{metrics} pointwise defined on cotangent spaces.} Also, notice that given two tensor fields $T_1$ and $T_2$, a tensor contraction defines a bilinear map $(T_1,T_2)\mapsto {T_1}_{\cdot}T_2$. From Lemma \ref{MultiplicationPropertyGeneral}, such bilinear maps are continuous maps from $W^{k_1,p}_{-\tau_1}\times W^{k_2,p}_{-\tau_2}\to W^{k_3,p}_{-\tau_3}$ as long as $k_1+k_2>\frac{n}{p}+k_3$ and $\tau_3<\tau_1+\tau_2$. Since bounded bilinear maps are also smooth maps between normed spaces, putting these observations together we see that $F,S,f:\mathcal{M}^{k,p}_{-\tau}\to W^{k-2,p}_{-\tau-2}$ are smooth maps under our hypotheses, implying that $\mathrm{Ric}:\mathcal{M}^{k,p}_{-\tau}\to W^{k-2,p}_{-\tau-2}$ is smooth, and the same holds for the scalar curvature map $g\mapsto R_g$. Along these lines, the quadratic terms $|\mathrm{Ric}_g|^2_g$ and $R_g^2$ are both smooth maps from $\mathcal{M}^{k,p}_{-\tau}\to W^{k-4,p}_{-\tau-4}$, and we can rewrite
\begin{align}
\Delta_gR_g
&=\Delta_eR_g + (g^{-1}-e^{-1})_{\cdot}D^2R_g - {e^{-1}}_{\cdot}S_{\cdot}DR_g - (g^{-1}-e^{-1})_{\cdot}S_{\cdot}DR_g,
\end{align}
which, from the above considerations, is a smooth map from $W^{k-2,p}_{-\tau-2}\to W^{k-4,p}_{-\tau-4}$. Putting all this together establishes the lemma.
\end{proof}

Now, given an annulus near infinity, we intend to expand $Q_g$ around the Euclidean metric. Thus, consider annuli $\Omega_k=\{x\in\mathbb{R}^n \: :\: R_0\leq |x|\leq R_k\}\subset E_i$ within the end of $M$, and  $\hat{g}=\chi g + (1-\chi)\delta$ a metric on $\mathbb{R}^n$, where $\chi$ is a cut-off function, $0\leq\chi\leq 1$, equal to one on a neighbourhood of infinity and zero inside some ball, and satisfying $|\nabla^k \chi|\leq C|x|^{-k}$, so that $\hat{g}$ is Euclidean within some ball in $\mathbb{R}^n$ and agrees with $g$ in a neighbourhood of infinity. Since $g_{ij}=\delta_{ij}+O(|x|^{-\tau})$, then 
\begin{align}\label{ghatAE}
\hat{g}&=\chi \delta_{ij} + O_4(|x|^{-\tau}) + (1-\chi)\delta_{ij}=\delta_{ij}+{O(|x|^{-\tau}).}
\end{align}
If we fix $R_0$ to be sufficiently large, then $h \doteq\hat{g}-\delta$ is small on $\Omega_k$ and, due to Lemma \ref{QcurvFrechReg} and \cite[Theorem 2.4.15]{AMR-Book}, we can make a Taylor expansion of $Q:\mathcal{U}_{\delta}\subset\mathcal{M}^{k,p}_{-\tau}(\mathbb{R}^{n}\backslash\overline{B_{R_0}(0)})\to W^{k-4,p}_{-\tau-4}(\mathbb{R}^{n}\backslash\overline{B_{R_0}(0)})$, where $\mathcal{U}_{\delta}$ denotes a sufficiently small neighbourhood of $\delta$,\footnote{Due to our identifications of $\mathcal{M}^{k,p}_{-\tau}$ with a subset of $W^{k,p}_{-\tau}$ via $g=e+h\mapsto h$, $\mathcal{U}_{\delta}$ is actually identified with a neighbourhood of the origin in $W^{k,p}_{-\tau}$.} which gives us
\begin{align}
Q_{\hat{g}}=Q_{\delta}+DQ_{\delta}\cdot h + \mathcal{R}(h)=DQ_{\delta}\cdot h + \mathcal{R}(h),
\end{align} 
where $\mathcal{R}(h)$ stands for the remainder. Letting $V\in C^{\infty}(\mathbb{R}^{n}\backslash\overline{B_{R_0}(0)})$, one finds
\begin{align}\label{Michell1}
\int_{\Omega_k}VQ_{\hat{g}}dV_{\delta}&=\int_{\Omega_k}VDQ_{\delta}\cdot h dV_{\delta} + \int_{\Omega_k}V\mathcal{R}(h) dV_{\delta}.
\end{align}
Direct computations, which we will omit here\footnote{In this case, one can see that since the curvatures of the Euclidean space are null, the linearization of the quadratic terms in the $Q$-curvature, as well as the linearization of the Laplacian at the Euclidean metric will be zero. This ensures that $DQ_{\delta}\cdot h=-\frac{1}{2(n-1)}  \Delta_\delta D\mathrm{R}_{\delta}\cdot h$ with      \cite[Chapter I (11.5)]{MR2473363} yielding the linearization of  the scalar curvature, and the result.}, yield
\begin{align*}
DQ_{\delta}\cdot h=-\frac{1}{2(n-1)}\left(\Delta_{\delta}(\mathrm{div}_{\delta}^2 h) - \Delta^2_{\delta}(\mathrm{tr}_{\delta}h) \right),
\end{align*}
 where $\mathrm{div}_{\delta}^2 h \doteq  \partial_{ij} h_{ij}$. We can then integrate the first term in the right-hand side of (\ref{Michell1}) by parts to obtain
\begin{align}\label{Michell2}
\int_{\Omega_k}VQ_{\hat{g}}dV_{\delta}&=\int_{\Omega_k}\langle DQ^{*}_{\delta}\cdot V, h\rangle_{\delta} dV_{\delta} + \int_{\partial\Omega_k}\langle \mathbb{U}(h,V),\nu_{\delta}\rangle_{\delta}d\omega_{\delta} + \int_{\Omega_k}V\mathcal{R}(h) dV_{\delta},
\end{align}
where $\nu_\delta$ denotes the exterior normal. We can explicitly compute the $1$-form $\mathbb{U}$ in the boundary term and the adjoint $DQ^{*}_{\delta}$:
\begin{align}\label{Michell3}
\begin{split}
\mathbb{U}(h,V)&=-\frac{1}{2(n-1)}(Vd u - udV + \Delta_{\delta}V(\mathrm{div}_{\delta}h-d\mathrm{tr}_{\delta}h)-h(d\Delta_{\delta}V,\cdot)+\mathrm{tr}_{\delta}hd\Delta_{\delta}V),\\
D^{*}Q_{\delta}\cdot V&= -\frac{1}{2(n-1)}\left(-\Delta^2_{\delta}V\: \delta + \partial^2\Delta_{\delta}V\right)
\end{split}
\end{align}
where we denoted
\begin{align}
u\doteq \mathrm{div}_{\delta}^2h - \Delta_{\delta}\mathrm{tr}_{\delta}h.
\end{align}

Now, along the lines of \cite{Michel,Herzlich}, given a Riemannian manifold $(M^n,g)$, let us distinguish the following set:
\begin{align}
\mathcal{N}_{0}\doteq \mathrm{Ker}(D^{*}Q_{\delta}).
\end{align}
Tracing \eqref{Michell3} ensures that functions in $\mathcal{N}_0$ are biharmonic, and thus that they are smooth.

In this context, we can now establish the following result.

\begin{lem}\label{MichellLemma1}
Consider a {smooth} AE manifold $(M^n,g)$ {satisfying the hypotheses of Lemma \ref{QcurvFrechReg}} and associate to it the AE manifold $(\mathbb{R}^n,\hat{g})$ with $\hat{g}$ defined as above. Given $V\in\mathcal{N}_0$, if $VQ_{g},V\mathcal{R}(h)\in L^1(\mathbb{R}^n\backslash\overline{B_1(0)},dV_{\hat{g}})$ then the asymptotic charge
\begin{align}\label{4thEnergyHerz1}
\mathbb{E}_{g}(V)\doteq\lim_{r\rightarrow \infty}\int_{S_{r}}\langle \mathbb{U}(h,V),\nu_{\delta}\rangle_{\delta}d\omega_{\delta}
\end{align}
is finite and its value is independent of the sequence of compact sets $S_r$ used to compute it. 
\end{lem}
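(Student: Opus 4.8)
The plan is to start from the integration-by-parts identity (\ref{Michell2}) specialised to the annulus $\Omega_k=\{R_0\leq |x|\leq R_k\}$ and to exploit the defining property of $\mathcal{N}_0$. Since $V\in\mathcal{N}_0=\mathrm{Ker}(DQ^{*}_{\delta})$, the bulk term $\int_{\Omega_k}\langle DQ^{*}_{\delta}\cdot V,h\rangle_{\delta}\,dV_{\delta}$ vanishes identically, and writing $\partial\Omega_k=S_{R_k}\cup S_{R_0}$ with the inner sphere carrying the opposite orientation, (\ref{Michell2}) collapses to
\begin{align*}
\int_{S_{R_k}}\langle \mathbb{U}(h,V),\nu_{\delta}\rangle_{\delta}\,d\omega_{\delta}-\int_{S_{R_0}}\langle \mathbb{U}(h,V),\nu_{\delta}\rangle_{\delta}\,d\omega_{\delta}=\int_{\Omega_k}VQ_{\hat{g}}\,dV_{\delta}-\int_{\Omega_k}V\mathcal{R}(h)\,dV_{\delta}.
\end{align*}
This is the conservation identity on which everything rests: the difference of the $\mathbb{U}$-fluxes through two concentric spheres is controlled purely by a bulk integral of $VQ_{\hat{g}}$ and $V\mathcal{R}(h)$.

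Next I would transfer the integrability hypotheses to the reference measure $dV_{\delta}$. Since $\hat{g}$ is AE and coincides with $\delta$ inside a ball, for $R_0$ large the density $\sqrt{\det\hat{g}}$ is pinched between two positive constants on $\{|x|\geq R_0\}$, so $L^1(dV_{\hat{g}})$ and $L^1(dV_{\delta})$ agree there with comparable norms. Moreover $Q_{\hat{g}}=Q_g$ near infinity and differs from it only on a compact set where both are bounded, whence the assumption $VQ_g\in L^1$ upgrades to $VQ_{\hat{g}}\in L^1(\{|x|\geq R_0\},dV_{\delta})$, and the same reasoning applies to $V\mathcal{R}(h)$. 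Consequently, letting $R_k\rightarrow\infty$, both bulk integrals on the right-hand side converge to finite limits by dominated convergence, while the inner-boundary term over $S_{R_0}$ is a fixed constant. Rearranging, the outer flux $\int_{S_{R_k}}\langle \mathbb{U}(h,V),\nu_{\delta}\rangle_{\delta}\,d\omega_{\delta}$ converges to a finite value, which is precisely $\mathbb{E}_g(V)$.

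Finally, for independence of the exhausting family I would invoke the conservation identity once more. Given two surfaces $\Sigma_1\subset\Sigma_2$ enclosing the core, their flux difference equals $\int_{\Omega}(VQ_{\hat{g}}-V\mathcal{R}(h))\,dV_{\delta}$ over the region $\Omega$ between them. Since $VQ_{\hat{g}}-V\mathcal{R}(h)\in L^1$, its integral over regions receding to infinity tends to zero, so the sequence of fluxes satisfies a Cauchy criterion and the limit is the same for any exhaustion; for families that are not a priori nested, one compares each member to a coordinate sphere $S_r$ of suitable radius and applies the same $L^1$-tail bound.

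I expect the main obstacle to be the bookkeeping of the two metrics and their measures: one must carefully confirm that the hypotheses stated with respect to $dV_{\hat{g}}$ (and for $Q_g$ rather than $Q_{\hat{g}}$) indeed guarantee convergence of the $dV_{\delta}$-integrals appearing in (\ref{Michell2}), and that the remainder $\mathcal{R}(h)$ — defined only where the Taylor expansion is valid, namely on $\{|x|\geq R_0\}$ — does not obstruct integrability on the compact transition region. Once this comparison is in place, the statement follows from a routine application of the conservation identity together with the integrability of the bulk density.
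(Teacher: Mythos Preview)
Your proof is correct and follows essentially the same approach as the paper: specialise (\ref{Michell2}) to $V\in\mathcal{N}_0$ so the adjoint term drops, split the boundary into inner and outer spheres, and use the $L^1$ hypotheses on the bulk terms to conclude convergence of the outer flux. You supply a bit more detail than the paper does (the comparison of $dV_{\hat g}$ with $dV_{\delta}$, the passage from $Q_g$ to $Q_{\hat g}$, and the Cauchy-type argument for independence of the exhaustion), but the underlying mechanism is identical.
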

\begin{proof}
Using that $V\in\mathcal{N}_0$ in (\ref{Michell2}), we find
\begin{align*}
\int_{\Omega_k}VQ_{\hat{g}}dV_{\delta}&=\int_{\partial\Omega_k}\langle \mathbb{U}(h,V),\nu_{\delta}\rangle_{\delta}d\omega_{\delta} +\int_{\Omega_k}V\mathcal{R}(h) dV_{\delta}.
\end{align*}
Now, let $S_0$ and $S_k$ denote the \emph{inner} and \emph{outer} boundaries of $\Omega_k$, so that we can rewrite
\begin{align}\label{Michell4}
\int_{S_k}\langle \mathbb{U}(h,V),\nu_{\delta}\rangle_{\delta}d\omega_{\delta}=-\int_{S_0}\langle \mathbb{U}(h,V),\nu_{\delta}\rangle_{\delta}d\omega_{\delta} + \int_{\Omega_k}VQ_{\hat{g}}dV_{\delta} -\int_{\Omega_k}V\mathcal{R}(h) dV_{\delta}.
\end{align}
Notice that above, following the conventions of (\ref{Michell2}) where we follow the Stokes orientation for $\Omega_k$, $-\nu_{\delta}|_{S_0}$ points towards infinity. 

Due to our hypotheses, if we pass to the limit $k\rightarrow\infty$ in the above identity the last two terms must be finite, while the first one is always finite and independent of $k$. Therefore the left-hand side is finite independently of the sequence used to compute it.


\end{proof}

With the above general result in mind, let us notice that $V\equiv 1 \in \mathcal{N}_0$ and therefore the above lemma provides an implicit condition (given in the form of $\mathcal{R}(h)\in L^1(\mathbb{R}^n\backslash\overline{B_1(0)},dV_g)$) so that $\mathbb{E}_g(1)$ is finite and well-defined. In fact, in this case the 1-form $\mathbb{U}$ is reduced to
\begin{align}\label{Michell5}
\mathbb{U}(h,1)&=-\frac{1}{2(n-1)}d(\mathrm{div}_{\delta}^2 h - \Delta_{\delta}\mathrm{tr}_{\delta}h).
\end{align}

With the above expression in mind, we find the following corollary.

\begin{lem}\label{MichellLemma2}
Let $(M^n,g)$ be a {$W^{k,p}_{-\tau}$}-AE manifold, {$p>n$ and $k\geq 5$}, of order $\tau>\max\{0,\frac{n-4}{2}\}$, $n\geq 3$, satisfying $Q_g\in L^1(M,dV_g)$. Then, it holds that $\mathbb{E}_g(1)$ is well-defined and 
\begin{align}
\mathbb{E}_{g}(1)=-\frac{1}{2(n-1)}\lim_{r\rightarrow \infty}\int_{S_{r}}\langle d(\mathrm{div}_{\delta}^2 g - \Delta_{\delta}\mathrm{tr}_{\delta}g),\nu_{\delta}\rangle_{\delta}d\omega_{\delta}=\frac{1}{2(n-1)}\mathcal{E}(g),
\end{align}
where $\nu_{\delta}=\frac{x}{r}$ stands for the outward pointing Euclidean unit normal to $S_r$.
\end{lem}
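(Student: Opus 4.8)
The plan is to recognise $\mathbb{E}_g(1)$ as the special case $V\equiv 1$ of the asymptotic charge $\mathbb{E}_g(V)$ furnished by Lemma \ref{MichellLemma1}, and then to match the resulting boundary integral with $\mathcal{E}(g)$. Since $1\in\mathcal{N}_0$, the only thing needed to invoke Lemma \ref{MichellLemma1} is to verify its two integrability hypotheses for $V\equiv 1$, namely $Q_{\hat g}\in L^1(\mathbb{R}^n\backslash\overline{B_1(0)},dV_{\hat g})$ and $\mathcal{R}(h)\in L^1(\mathbb{R}^n\backslash\overline{B_1(0)},dV_{\hat g})$, where $h=\hat g-\delta$. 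The first is immediate: since $\hat g=g$ in a neighbourhood of infinity, while $\hat g$ is smooth with $Q_{\hat g}$ bounded on the compact interpolation region and $dV_{\hat g}$ is uniformly comparable to $dV_g$ at infinity, the assumption $Q_g\in L^1(M,dV_g)$ transfers directly.

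The crux, which I expect to be the main obstacle, is controlling the remainder $\mathcal{R}(h)=Q_{\hat g}-DQ_\delta\cdot h$. The key point is that, because $Q_\delta=0$, the remainder collects exactly the contributions that are \emph{at least quadratic} in $h$ and its derivatives. I would exploit that $Q_g$ is a fourth order operator all of whose terms carry the same differential weight: the quadratic contributions (arising from $|\mathrm{Ric}_g|^2_g$, $R_g^2$, and the nonlinear part of $\Delta_gR_g$) are schematically of the form $(\partial^a h)(\partial^b h)$ with $a+b=4$, so that each factor of $h$ contributes decay $|x|^{-\tau}$ and each derivative one further power of $|x|^{-1}$. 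Tracking these weights through Lemma \ref{QcurvFrechReg} and the multiplication property of Lemma \ref{MultiplicationPropertyGeneral} gives $\mathcal{R}(h)\in W^{k-4,p}_{-2\tau-4}$ (higher order terms decaying faster). As $k\geq 5$ ensures $k-4\geq 1$ and $p>n$, the weighted Sobolev embedding yields the pointwise bound $|\mathcal{R}(h)(x)|\lesssim\sigma(x)^{-2\tau-4}\lesssim|x|^{-2\tau-4}$ near infinity. Integrating in polar coordinates, $\int_{|x|\geq 1}|x|^{-2\tau-4}\,dx<\infty$ exactly when $n-2\tau-4<0$, i.e. $\tau>\frac{n-4}{2}$, which is guaranteed by the hypothesis $\tau>\max\{0,\frac{n-4}{2}\}$. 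This delivers $\mathcal{R}(h)\in L^1$ and allows the application of Lemma \ref{MichellLemma1}, so $\mathbb{E}_g(1)$ is finite and independent of the exhausting sequence.

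It then remains to identify the boundary integral with $\mathcal{E}(g)$. Starting from the reduced form \eqref{Michell5} of $\mathbb{U}(h,1)$, I would use that $\hat g=g$ near infinity together with $\mathrm{div}_\delta^2\delta=0$ and $\Delta_\delta\mathrm{tr}_\delta\delta=0$ (derivatives of the constant Euclidean metric) to replace $h$ by $g$ inside $\mathrm{div}_\delta^2 h-\Delta_\delta\mathrm{tr}_\delta h$ in the limiting integral, which gives the first claimed equality. For the second equality I would pass to Cartesian components: writing $\mathrm{div}_\delta^2 g=\partial_a\partial_i g_{ai}$ and $\Delta_\delta\mathrm{tr}_\delta g=\partial_i\partial_i g_{aa}$, one has
\begin{align*}
\langle d(\mathrm{div}_\delta^2 g-\Delta_\delta\mathrm{tr}_\delta g),\nu_\delta\rangle_\delta=\bigl(\partial_j\partial_a\partial_i g_{ai}-\partial_j\partial_i\partial_i g_{aa}\bigr)\nu_\delta^j=-\bigl(\partial_j\partial_i\partial_i g_{aa}-\partial_j\partial_a\partial_i g_{ai}\bigr)\nu_\delta^j,
\end{align*}
whose right-hand side is precisely the negative of the integrand defining $\mathcal{E}(g)$ in \eqref{FourthOrderEnergy}. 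The two sign changes combine with the prefactor to produce $\mathbb{E}_g(1)=\frac{1}{2(n-1)}\mathcal{E}(g)$, completing the proof. The only quantitative input is the decay threshold established above; the identification itself is a direct index computation.
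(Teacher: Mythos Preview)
Your argument is correct, but it takes a different route from the paper's. You establish well-definedness of $\mathbb{E}_g(1)$ by verifying the integrability hypotheses of Lemma \ref{MichellLemma1}: in particular, you prove $\mathcal{R}(h)\in L^1$ by tracing the at-least-quadratic structure of the Taylor remainder through the weighted Sobolev calculus to obtain $\mathcal{R}(h)=O(|x|^{-2\tau-4})$, which is integrable precisely when $\tau>\frac{n-4}{2}$. The paper, by contrast, does \emph{not} go through Lemma \ref{MichellLemma1} at all. After identifying $\mathbb{U}(h,1)$ with the energy density of $\mathcal{E}(g)$, it simply observes that $W^{k,p}_{-\tau}\hookrightarrow C^4_{-\tau}$ (since $k\geq 5$, $p>n$) and then cites \cite[Proposition 1]{avalos2021positive}, which directly guarantees convergence of the surface integrals defining $\mathcal{E}(g)$ under the decay hypothesis $\tau>\max\{0,\frac{n-4}{2}\}$ together with $Q_g\in L^1$. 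The paper even remarks that this ``ensure[s] convergence without requiring that $\mathcal{R}(h)\in L^1$.'' Your approach has the advantage of being self-contained within the paper, and in fact shows that the hypotheses of Lemma \ref{MichellLemma1} \emph{are} satisfied here (a point the paper leaves implicit); the paper's approach is shorter but relies on an external reference. The final identification with $\mathcal{E}(g)$ via the index computation is the same in both.
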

\begin{proof}

Using (\ref{Michell5}) we recognise in the right-hand side of (\ref{4thEnergyHerz1}) the \emph{energy density} from the fourth order energy $\mathcal{E}(g)$ analysed in \cite{avalos2021energy} and whose definition can be found in the introduction \eqref{FourthOrderEnergy}. Thus, {since under our conditions $W^{k,p}_{-\tau}(M)\hookrightarrow C^{4}_{-\tau}(M)$}, from \cite[Proposition 1]{avalos2021positive}, it follows that we can pass to limit $r\rightarrow\infty$ under the conditions of this lemma, which ensure convergence without requiring that $\mathcal{R}(h) \in L^1$. This establishes the claim.
\end{proof}

\begin{remark}
The choice of $1 \in \mathcal{N}_0$ is actually meaningful. Let us recall that, on a relativistic initial data set $(M,g,K)$ solving the constraint equations of general relativity, a Killing initial data set (KID) is defined as a non-trivial element $(f,V)$ in the kernel of the adjoint $D\Phi_{(g,K)}^{*}$ to the linearised constraint equations, where $\Phi$ denotes the constraint map, sending $(g,K)\mapsto \Phi(g,K)=(\mathcal{H}(g,K),\mathcal{M}(g,K))$, where $\mathcal{H}$ and $\mathcal{M}$ denote the Hamiltonian and momentum constraints. KIDs are known to be in a one-to-one correspondence with Killing fields in the space-time obtained by evolving $(M,g,K)$ into a solution of the space-time Einstein equations
 \cite{MR456272,MR416469}. In particular, if $(M,g)=\left(\R^n, \delta \right)$ and $K=0$, the solution of the Cauchy problem is $\R^{n,1}=\R^n \times \R$ with the Lorentz metric of signature $(n,1)$, and the KID associated to the time translation vector $\partial_t$ is merely $(1,0)$. Generalising this case, for totally geodesic initial data sets ($K\equiv 0$), the set of KIDs reduces to the set of static potentials $\mathcal{N}^{GR}_0(g)$, which are defined as elements $f\in \mathrm{ker} DR^{*}_g$. 
Thus, $1\in \mathcal{N}^{GR}_0(\delta)$ becomes naturally associated to time-like Killing fields (and their associated conservation laws) of AE initial data sets.\footnote{For some further details on this topic, see, for instance, \cite{Herzlich}.}

In parallel to the above comments, $\mathcal{E}(g)$ was introduced in \cite{avalos2021energy} as a quantity defined on AE-slices of asymptotically Minkowskian space-time solutions to a set of fourth order equations, which is conserved through time translations. In particular, from the work of \cite{avalos2021energy} and \cite{avalos2021positive}, the $Q$-curvature of the corresponding fourth order initial data set appears as a natural analogue to the scalar curvature for the corresponding relativistic initial data sets. Thus, $\mathbb{E}_g(1)$ appears as the corresponding dynamical equivalent of the global Lagrangian approach defining $\mathcal{E}(g)$ in \cite{avalos2021energy}, with the same conserved energy on the space slices. In GR, it is shown that the same approach to the ADM mass admits a Hamiltonian formulation. It can be expected that a Hamiltonian formulation of the fourth order case would yield $\mathbb{E}_g(1)$  in the same manner.
\end{remark}

Let us recall that our fourth order $J$-Einstein tensor is defined by 
\begin{align}
\label{defGJg}
G_{J}\doteq  J_g -\frac{1}{4}Q_g\, g ,
\end{align}
where $J_g$ is defined by \eqref{J-tensor.2} and the $Q$-curvature by \eqref{Qgintro}.
From \cite{aa43,AvalosFreitas}, it is known to satisfy the local conservation identity $\mathrm{div}_gG_J=0$. Then, in parallel to \cite{Herzlich}, we have Theorem \ref{HerzlichThmIntro}, which we state again to spare the reader some back and forth.

\begin{theo}\label{HerzlichThm}
Let $(M^n,g)$ be an AE manifold satisfying the hypotheses of Lemma \ref{MichellLemma2}. Then, the following identity holds
\begin{align}
\frac{n-4}{8(n-1)} \mathcal{E}(g)=-\lim_{r\rightarrow\infty}\int_{S_r}G_J(X,\nu_{\delta})d\omega_{\delta},
\end{align}
where $X=r\partial_r$.
\end{theo}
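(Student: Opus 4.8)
The plan is to express the flux of $G_J$ at infinity through the very boundary one-form $\mathbb{U}(h,1)$ that defines the charge $\mathbb{E}_g(1)$, and then to invoke Lemma \ref{MichellLemma2}, which identifies $\mathbb{E}_g(1)=\frac{1}{2(n-1)}\mathcal{E}(g)$. Concretely, it suffices to prove $\lim_{r\to\infty}\int_{S_r}G_J(X,\nu_\delta)\,d\omega_\delta=-\frac{n-4}{4}\mathbb{E}_g(1)$. I would work near infinity with $g=\delta+h$, using the cut-off metric $\hat g$ from the construction preceding Lemma \ref{MichellLemma1}, and linearise $G_J$ at the flat metric. Since Euclidean space is both $Q$-flat and $J$-flat, $G_J[\delta]=0$, so writing $\dot G_J:=DG_J|_\delta\cdot\hat h$ and $\mathcal R(\hat h):=G_J[\hat g]-\dot G_J$, the tensor $\dot G_J$ carries the full leading-order behaviour while $\mathcal R$ is at least quadratic in $\hat h$.

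First I would record the algebraic trace identity $\mathrm{tr}_g G_J=-\frac{n-4}{4}Q_g$. Tracing \eqref{J-tensor.2} and using that the Bach tensor is trace-free, together with the fact that each of the three groups of terms in \eqref{T-tensor} is separately trace-free (so $\mathrm{tr}_g T_g=0$), gives $\mathrm{tr}_g J_g=Q_g$ and hence $\mathrm{tr}_g G_J=Q_g-\frac{n}{4}Q_g$. Linearising at $\delta$ (where $G_J$ and $Q$ vanish) turns this into the pointwise identity $\mathrm{tr}_\delta\dot G_J=-\frac{n-4}{4}DQ_\delta\cdot\hat h$; combining it with the expression for $DQ_\delta\cdot h$ recorded before \eqref{Michell2} and with $u=\mathrm{div}_\delta^2\hat h-\Delta_\delta\mathrm{tr}_\delta\hat h$ (so that $DQ_\delta\cdot\hat h=-\frac{1}{2(n-1)}\Delta_\delta u$) yields
\[ \mathrm{tr}_\delta\dot G_J=\frac{n-4}{8(n-1)}\Delta_\delta u. \]
Next, linearising the conservation law $\mathrm{div}_g G_J=0$ at $\delta$ gives $\mathrm{div}_\delta\dot G_J=0$. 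Because $X=r\partial_r$ satisfies $\partial_i X_j=\delta_{ij}$, i.e. $\mathcal L_X\delta=2\delta$, one computes
\[ \mathrm{div}_\delta\big(\dot G_J(X,\cdot)\big)=(\mathrm{div}_\delta\dot G_J)(X)+\dot G_J^{ij}\partial_i X_j=\mathrm{tr}_\delta\dot G_J=\frac{n-4}{8(n-1)}\Delta_\delta u. \]

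The key step is then to introduce the one-form $\xi:=\dot G_J(X,\cdot)-\frac{n-4}{8(n-1)}du$, which by the previous line is divergence-free. The crucial observation is that $\dot G_J$, $u$, and therefore $\xi$, are linear differential expressions in $\hat h=\hat g-\delta$, a tensor defined on all of $\mathbb{R}^n$ that vanishes on the inner ball where the cut-off $\chi$ is zero; hence $\xi$ extends to a smooth, divergence-free vector field on $\mathbb{R}^n$ vanishing near the origin, and the divergence theorem on $B_r$ forces $\int_{S_r}\langle\xi,\nu_\delta\rangle\,d\omega_\delta=\int_{B_r}\mathrm{div}_\delta\xi\,dV_\delta=0$ for every $r$. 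Recalling $\mathbb{U}(h,1)=-\frac{1}{2(n-1)}du$ from \eqref{Michell5}, this vanishing reads
\[ \int_{S_r}\dot G_J(X,\nu_\delta)\,d\omega_\delta=-\frac{n-4}{4}\int_{S_r}\langle\mathbb{U}(h,1),\nu_\delta\rangle\,d\omega_\delta, \]
valid for all large $r$; letting $r\to\infty$ and using Lemma \ref{MichellLemma2} gives the value $-\frac{n-4}{4}\mathbb{E}_g(1)=-\frac{n-4}{8(n-1)}\mathcal{E}(g)$ for the linearised flux (in particular the right-hand side converges, forcing convergence of the linear flux without any crude decay estimate).

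It remains to show that passing from $\dot G_J$ to the full $G_J$ does not affect the limit, i.e. $\int_{S_r}\mathcal R(\hat h)(X,\nu_\delta)\,d\omega_\delta\to0$. By the Taylor expansion underlying Lemma \ref{QcurvFrechReg}, the remainder is at least quadratic and, as every monomial of the fourth-order invariant $G_J$ carries four derivatives in total, it lies in $W^{k-4,p}_{-2\tau-4}\hookrightarrow C^0_{-2\tau-4}$; since $|X|\sim r$ and $|S_r|\sim r^{n-1}$, the integrand is $O(r^{\,n-4-2\tau})$, which vanishes in the limit exactly because $\tau>\frac{n-4}{2}$. I expect this decay bookkeeping — together with the companion verification that replacing the $g$-induced unit normal and area element by their $\delta$-counterparts produces only errors of the same quadratic order — to be the main technical obstacle, whereas the structural identities above are cheap once the cancellation $\mathrm{tr}_g T_g=0$ and the linearised conservation law are in hand.
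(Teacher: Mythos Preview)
Your argument is correct and genuinely different from the paper's. The paper works with the \emph{nonlinear} conservation law $\mathrm{div}_g G_{J_g}=0$ on an annulus $\Omega_R$ bounded by $S_{R/4}$ and $S_R$, with the interpolating metric $\hat g$ equal to $\delta$ near the inner sphere. It writes $\int_{\partial\Omega_R}G_{J_{\hat g}}(X,\nu_{\hat g})\,d\omega_{\hat g}=\tfrac{1}{2}\int_{\Omega_R}\langle G_{J_{\hat g}},\pounds_X\hat g\rangle_{\hat g}\,dV_{\hat g}$, splits $\pounds_X\hat g$ into its trace and conformal-Killing parts, and then shows the conformal-Killing contribution is $O(R^{n-2\tau-4})=o(1)$ while the trace part equals $\tfrac{4-n}{4}\int_{\Omega_R}Q_{\hat g}\,dV_{\hat g}$ up to $o(1)$; the latter is converted into the boundary flux of $\mathbb{U}(h,1)$ via the Michel identity \eqref{Michell4}, and the convergence of the $Q$-remainder $\int_{\Omega_R}\mathcal R(h)\,dV_\delta$ is inferred indirectly from $Q_g\in L^1$ together with the known convergence of $\mathcal E(g)$.

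Your route linearises $G_J$ rather than $Q$, observes that $\dot G_J$ inherits both the trace identity $\mathrm{tr}_\delta\dot G_J=\tfrac{n-4}{8(n-1)}\Delta_\delta u$ and the conservation law $\mathrm{div}_\delta\dot G_J=0$, and then packages these into the single divergence-free $1$-form $\xi=\dot G_J(X,\cdot)-\tfrac{n-4}{8(n-1)}du$, whose flux through $S_r$ vanishes identically because $\hat h$ is compactly supported away from the origin. This yields the exact equality $\int_{S_r}\dot G_J(X,\nu_\delta)\,d\omega_\delta=-\tfrac{n-4}{4}\int_{S_r}\langle\mathbb U(h,1),\nu_\delta\rangle\,d\omega_\delta$ for \emph{every} large $r$, not just in the limit; the passage to the full $G_J$ is then a direct $O(r^{n-4-2\tau})$ remainder estimate. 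Compared with the paper, your argument is more elementary---no annulus trick, no conformal-Killing decomposition, no indirect control of the $Q$-remainder via $L^1$---and it treats all dimensions $n\ge 3$ uniformly, whereas the paper disposes of $n=3,4$ by a separate observation. The paper's approach, on the other hand, stays closer to Herzlich's template and makes the role of the exact conformal Killing field $X$ more transparent at the nonlinear level.
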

\begin{proof}
As mentioned in the introduction, the theorem  stands for $n= 3, 4$  since for these low dimensions the fourth order mass detects non-decreasing terms at infinity. Imposing decay on the metric (and thus also on $G_{J_g}$) then forces  $\mathcal{E}(g)=0 = \lim_{r\rightarrow\infty}\int_{S_r}G_{J_g}(X,\nu_{\delta})d\omega_{\delta}$ (see \cite{avalos2021energy,avalos2021positive}).

For $n \ge 5$, let us first  choose a cut-off function $\chi_R$ satisfying
\begin{align*}
\chi_R(x)=\begin{cases}
0, \text{ if } |x|<\frac{R}{2},\\
1, \text{ if } |x|>\frac{3R}{4},
\end{cases}
\end{align*}
and such that $|\nabla^k\chi_R|\leq C_kR^{-k}$, $k\leq 4$, for some constants $C_k$ independent of $R$. Then, consider the annulus $\Omega_R=B_{R}(0)\backslash \overline{B_{\frac{R}{4}}(0)}\subset E\cong \mathbb{R}^n\backslash\overline{B_1(0)}$, for $R$ sufficiently large. Also, denote by $\hat{g}=\chi_Rg + (1-\chi_R)\delta$ an associated AE metric on $\mathbb{R}^n$, which is by construction exactly Euclidean on the inner boundary of $\Omega_R$, while it agrees with $g$ on its outer boundary.  Since $\hat{g}$ is AE,  the fourth order tensors $J_{\hat{g}}$, $Q_{\hat{g}}$, $G_{J_{\hat{g}}}$ are $O\left(|x|^{-\tau-4} \right)$.\footnote{We refer the reader to the proof of theorem \ref{theoJflatisflat} for a detailed computation of the growth order in the weak case. }

Now, from  the local conservation law obeyed by $G_{J_{\hat{g}}}$ we find that
\begin{align*}
\frac{1}{2}\int_{\Omega_R}\langle G_{J_{\hat{g}}},\pounds_X{\hat{g}}\rangle_{\hat{g}}dV_{\hat{g}}=\int_{\Omega_R}\mathrm{div}_{\hat{g}}(G_{J_{\hat{g}}}(X,\cdot))dV_{\hat{g}}=\int_{\partial\Omega_R}G_{J_{\hat{g}}}(X,\nu_{\hat{g}})d\omega_{\hat{g}}.
\end{align*}
Since $\mathrm{tr}_{\hat{g}}G_{J_{\hat{g}}}=\frac{4-n}{4}Q_{\hat{g}}$, this implies that
\begin{align}\label{Herzlich1}
\int_{\partial\Omega_R}G_{J_{\hat{g}}}(X,\nu_{\hat{g}})d\omega_{\hat{g}}&=\frac{1}{2}\int_{\Omega_R}\langle G_{J_{\hat{g}}},\pounds_{{\hat{g}},conf}X\rangle_{\hat{g}}dV_{\hat{g}} +\frac{4-n}{4n}\int_{\Omega_R}Q_{\hat{g}}\mathrm{div}_{\hat{g}}XdV_{\hat{g}},
\end{align}
where $\pounds_{{\hat{g}},conf}X\doteq \pounds_{X}{\hat{g}}-\frac{2}{n}\mathrm{div}_{\hat{g}}X\:{\hat{g}}$ stands for the conformal Killing Laplacian.  Let us now estimate each term in (\ref{Herzlich1}). First of all, notice that since ${\hat{g}}$ is Euclidean in a neighbourhood of the inner boundary of $\Omega_R$ and $g$ in a neighbourhood of the outer boundary, then 
\begin{align}\label{Herzlich2}
\int_{\partial\Omega_R}G_{J_{\hat{g}}}(X,\nu)d\omega_{\hat{g}}=\int_{S_R}G_{J_{g}}(X,\nu)d\omega_{g}.
\end{align}

Now, since $X$ is a conformal Killing field of the Euclidean metric, appealing to the  AE condition we find
\begin{align*}
\pounds_{g,conf}X=\pounds_{\delta,conf}X + O(|x|^{-\tau})=O(|x|^{-\tau}).
\end{align*}
Therefore, 
\begin{align*}
|\langle G_{J_{\hat{g}}},\pounds_{{\hat{g}},conf}X\rangle_{\hat{g}}|=O(|x|^{-2\tau-4}).
\end{align*}
This implies that
\begin{align}\label{Herzlich3}
\big\vert\int_{\Omega_R}\langle G_{J_{\hat{g}}},\pounds_{{\hat{g}},conf}X\rangle_{\hat{g}}dV_{\hat{g}}\big\vert=O(R^{n-2\tau-4})=o(1).
\end{align}

Let us now deal with the last term in (\ref{Herzlich1}), where since 
\begin{align*}
\mathrm{div}_gX=\mathrm{div}_{\delta}X+O(|x|^{-\tau})=n + O(|x|^{-\tau}),
\end{align*}
it then follows that
\begin{align}\label{Herzlich4}
\int_{\Omega_R}Q_{\hat{g}}\mathrm{div}_{\hat{g}}XdV_{\hat{g}}=n\int_{\Omega_R}Q_{\hat{g}}dV_{\hat{g}} + O(R^{n-2\tau-4}).
\end{align}

Let us notice that the AE condition together with $Q_g\in L^1(M,dV_g)$ imply that
\begin{align*}
\int_{\Omega_R}Q_{\hat{g}}dV_{\hat{g}}=\int_{\Omega_R}Q_{\hat{g}}dV_{\delta} + o(1).
\end{align*}
Using the above expression in the identity (\ref{Michell4}), we can rewrite
\begin{align*}
\int_{\Omega_R}Q_{\hat{g}}dV_{\hat{g}}&= \int_{S_R}\langle \mathbb{U}(h,1),\nu_{\delta}\rangle_{\delta}d\omega_{\delta} + \int_{S_{\frac{R}{4}}}\langle \mathbb{U}(h,1),\nu_{\delta}\rangle_{\delta}d\omega_{\delta} + \int_{\Omega_R}\mathcal{R}(h) dV_{\delta} +o(1),\\
&=\int_{S_R}\langle \mathbb{U}(h,1),\nu_{\delta}\rangle_{\delta}d\omega_{\delta}  + \int_{\Omega_R}\mathcal{R}(h) dV_{\delta} +o(1),
\end{align*}
where we have used (\ref{Michell5}) and the flatness in a neighbourhood of the inner boundary to eliminate the corresponding term. Furthermore, one deduces that under our conditions, since the left-hand side in (\ref{Michell4}) converges to $\mathcal{E}(g)$ due to Lemma \ref{MichellLemma2} and $Q_{g}\in L^1(M,dV_g)$, then the limit
\begin{align*}
\lim_{r\rightarrow\infty}\int_{B_r(0)}\mathcal{R}(h) dV_{\delta}<\infty,
\end{align*}
implying that, as $R\rightarrow\infty$,
\begin{align*}
\Big\vert\int_{\Omega_R}\mathcal{R}(h) dV_{\delta}\Big\vert=o(1).
\end{align*}
Therefore, one finds that
\begin{align*}
\int_{\Omega_R}Q_{\hat{g}}dV_{\hat{g}}&=\int_{S_R}\langle \mathbb{U}(h,1),\nu_{\delta}\rangle_{\delta}d\omega_{\delta}  + o(1).
\end{align*}
Using this in (\ref{Herzlich4}), we find that
\begin{align}\label{Herzlich5}
\int_{\Omega_R}Q_{\hat{g}}\mathrm{div}_{\hat{g}}XdV_{\hat{g}}=n\int_{S_R}\langle \mathbb{U}(h,1),\nu_{\delta}\rangle_{\delta}d\omega_{\delta} + o(1).
\end{align}
Thus, putting together (\ref{Herzlich2}),(\ref{Herzlich3}) and (\ref{Herzlich5}), we can rewrite (\ref{Herzlich1}) as
\begin{align*}
\int_{S_R}G_{J_{g}}(X,\nu_{g})d\omega_{g}&=  \frac{4-n}{4}\int_{S_R}\langle \mathbb{U}(h,1),\nu_{\delta}\rangle_{\delta}d\omega_{\delta} + o(1).
\end{align*}
Next, from Lemma \ref{MichellLemma2} we know that we can pass to the limit in the right-hand side of the above expression to get 
\begin{align}\label{linkEGJ}
\lim_{R\rightarrow\infty}\int_{S_R}G_{J_{g}}(X,\nu_{g})d\omega_{g}
&=-\frac{n-4}{8(n-1)}\mathcal{E}(g).
\end{align}

Finally, from our asymptotic conditions, we know that $G_{J_{g}}(X,\nu_{g})=G_{J_{g}}(X,\nu_{\delta})+O(|x|^{-2\tau-3})$ and $d\omega_g=(1+O(|x|^{-\tau}))d\omega_{\delta}$. Thus
\begin{align}\label{Herzlich6}
\int_{S_R}G_{J_{g}}(X,\nu_{g})d\omega_{g}
&=\int_{S_R}G_{J_{g}}(X,\nu_{\delta})d\omega_{\delta} + O(R^{n-2\tau-4}).
\end{align}
Using (\ref{linkEGJ}) and $n-2\tau-4<0$, we can pass to the limit and establish our result. 
\end{proof}

\begin{cor}
\label{linkJE}
Under the conditions of Theorem \ref{HerzlichThm}, if $J_g=0$, then $\mathcal{E}=0$.
\end{cor}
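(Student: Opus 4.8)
The plan is to show that the hypothesis $J_g=0$ forces the full $J$-Einstein tensor $G_J$ to vanish identically, after which Theorem \ref{HerzlichThm} yields the conclusion at once. Concretely, I would first argue that $J_g=0$ implies $Q_g=0$, then observe from \eqref{defGJg} that $G_J=J_g-\frac14 Q_g\,g=0$, so that the boundary integrand $G_J(X,\nu_\delta)$ is identically zero and the right-hand side limit in Theorem \ref{HerzlichThm} vanishes.

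The key step is therefore the vanishing of $Q_g$. The cleanest route reuses the pointwise trace identity $\mathrm{tr}_g G_J=\frac{4-n}{4}Q_g$ already recorded in the proof of Theorem \ref{HerzlichThm}. Indeed, if $J_g=0$ then $G_J=-\frac14 Q_g\,g$, whence $\mathrm{tr}_g G_J=-\frac n4 Q_g$. Equating the two expressions for $\mathrm{tr}_g G_J$ gives $-\frac n4 Q_g=\frac{4-n}{4}Q_g$, i.e. $Q_g=0$. Equivalently, one may take the trace of \eqref{J-tensor.2} and use that the Bach tensor $B_g$ and the tensor $T_g$ are both trace-free — the latter being a short computation from \eqref{T-tensor} using $\mathrm{tr}_g(S_g\times S_g)=|S_g|^2_g$ and $\mathrm{tr}_g\overset{\circ}{S}_g=0$ — to conclude $\mathrm{tr}_g J_g=Q_g$, so that $J_g=0$ again yields $Q_g=0$.

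With $Q_g=0$ and $J_g=0$ in hand we obtain $G_J\equiv 0$, hence $\int_{S_r}G_J(X,\nu_\delta)\,d\omega_\delta=0$ for every $r$ and the limit on the right-hand side of Theorem \ref{HerzlichThm} is zero. For $n\ge 5$ the coefficient $\frac{n-4}{8(n-1)}$ is strictly positive, so $\mathcal{E}(g)=0$ follows immediately. For $n=3,4$ the coefficient degenerates (it vanishes at $n=4$), but in these low dimensions $\mathcal{E}(g)=0$ is already part of the conclusion of Theorem \ref{HerzlichThm}, as recalled at the start of its proof, so the corollary holds in all dimensions $n\ge 3$.

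There is essentially no serious obstacle here: once Theorem \ref{HerzlichThm} is available the argument is purely algebraic. The only point demanding a moment of care is the degeneracy of the proportionality constant at $n=4$ (and the anomalous low-dimensional behaviour in $n=3,4$ generally), which prevents reading $\mathcal{E}(g)=0$ off the identity directly in those cases; this is why I would separate the low-dimensional cases and invoke the fact that they are already settled within Theorem \ref{HerzlichThm}.
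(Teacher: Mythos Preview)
Your proposal is correct and follows essentially the same approach as the paper: use the trace identity $\mathrm{tr}_g J_g=Q_g$ to get $Q_g=0$, hence $G_J=0$, and then invoke Theorem \ref{HerzlichThm}. Your explicit handling of the degenerate coefficient in dimensions $n=3,4$ is in fact more careful than the paper's one-line proof, which applies \eqref{linkEGJ} without comment.
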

\begin{proof}
Since $\mathrm{tr}_g(J_g) = Q_g$, $J_g = 0$ implies $Q_g= 0$ and thus that $G_{J_g}=0$, which yields $\mathcal{E}(g)=0$ with \eqref{linkEGJ}.
\end{proof}

\section{$J_g$ flat means flat}

This section will be devoted to establishing a strong rigidity property of $J$-flat AE manifolds. That is, we shall prove that this property uniquely characterises Euclidean space, resembling the analogous property for Ricci-flat AE manifolds.


\begin{theo}\label{theoJflatisflat}
Let $(M^n,g)$ be a smooth AE manifold of class $W^{3,p}_{-\tau}(M,\Phi)$ with  $p>n \ge 3$ and $\tau>0$, for a given structure of infinity $\Phi$. If $J_g=0$ and $Y([g]) >0$, then $(M^n,g)$ is isometric to the Euclidean space $(\R^n, \delta)$.
\end{theo}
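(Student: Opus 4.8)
The plan is to reduce the statement to the rigidity clause of the positive energy theorem (Theorem \ref{PETHM}), by showing that $J$-flatness forces its decay and integrability hypotheses to hold. First I would trace the equation $J_g=0$: since $\mathrm{tr}_g J_g = Q_g$, this gives $Q_g \equiv 0$, so both $Q_g \geq 0$ and $Q_g \in L^1(M, dV_g)$ hold trivially, while $Y([g])>0$ is assumed. Hence the only hypothesis of Theorem \ref{PETHM} left to establish is the pointwise decay $g_{ij} - \delta_{ij} = O_4(r^{-\delta})$ with $\delta > \max\{0, \tfrac{n-4}{2}\}$ in a suitable structure of infinity.

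The core of the argument is then to bootstrap regularity and decay starting from the a priori $W^{3,p}_{-\tau}$ control with $\tau>0$ merely positive. Since $J_g = 0$ belongs to every weighted space, in particular to $W^{k-4,p}_{-\delta-4}(M,\Phi)$ for all $k$ and all $\delta$, I would apply Theorem \ref{AEDegree-JtensorIntro}: after passing to harmonic coordinates (Theorem \ref{HarmonicCoordThm}, which ensures that the change of structure of infinity costs no weighted Sobolev control), $(M^n,g)$ is of class $W^{k,p}_{-\tau}(M,\Theta)$ for every integer $k \geq 4$, and of class $W^{k,p}_{-\delta}(M,\Theta)$ whenever $\tau < \delta < n-4$. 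Taking $k=5$ and, if $\tau > \tfrac{n-4}{2}$, keeping $\delta = \tau$, while otherwise (necessarily $n\geq 5$, whence $\tau \leq \tfrac{n-4}{2} < n-4$) choosing $\delta \in (\tfrac{n-4}{2}, n-4)$, I obtain in all cases $g \in W^{5,p}_{-\delta}$ in harmonic coordinates with $\delta > \max\{0,\tfrac{n-4}{2}\}$. Because $p>n$, the weighted Sobolev embedding $W^{5,p}_{-\delta} \hookrightarrow C^4_{-\delta}$ then yields exactly $g_{ij}-\delta_{ij} = O_4(r^{-\delta})$ in the harmonic structure of infinity.

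With these decays secured, the hypotheses of both Theorem \ref{HerzlichThm} and Theorem \ref{PETHM} are met. Corollary \ref{linkJE} gives $\mathcal{E}(g)=0$ directly from $J_g=0$, and the rigidity clause of Theorem \ref{PETHM}, stating that $\mathcal{E}(g)=0$ holds iff $(M,g)\cong(\mathbb{R}^n,\delta)$, then forces $(M^n,g) \cong (\mathbb{R}^n, \delta)$, which is the desired conclusion.

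The hard part is the bootstrap itself, that is, the proof of Theorem \ref{AEDegree-JtensorIntro} and its self-consistency in the weak-regularity regime. The difficulty is that $J_g$ is a fourth-order object whereas only three derivatives are controlled a priori, so one cannot freely assume that $J_g$, $Q_g$ and $G_{J_g}$ decay like $O(r^{-\tau-4})$; establishing their growth order under only $W^{3,p}_{-\tau}$ control is precisely what must be computed by hand (this is the computation alluded to in the proof of Theorem \ref{HerzlichThm}). The mechanism I would use is that $J_g = 0$ yields, through its trace ($Q_g=0$, a second-order elliptic equation in which $\Delta_g R_g$ equals a quadratic expression in the curvature) and through its trace-free part (which via \eqref{J-tensor.2} and the Bach and $T$-tensors furnishes a second-order elliptic equation for the remaining curvature), a fourth-order elliptic equation for $g$ in the harmonic gauge \eqref{RicciHarmonic} whose lower-order terms decay strictly faster. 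Iterating the elliptic regularity Lemma \ref{lemregularite} then raises the number of controlled derivatives, while the Fredholm improvement of Proposition \ref{regularitymaxwell} raises the decay rate up to the threshold $n-4$; the invariance results of Section 3 (Corollaries \ref{SobolevInvariance-Coro1} and \ref{SobolevInvariance-Coro2}) are what make the passage to harmonic coordinates harmless.
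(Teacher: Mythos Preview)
Your proposal is correct and follows essentially the same route as the paper: trace $J_g=0$ to get $Q_g=0$, invoke the bootstrap Theorem \ref{AEDegree-JtensorIntro} (stated in the paper as Theorem \ref{J-Bootstrap}) to upgrade the $W^{3,p}_{-\tau}$ control to $W^{k,p}_{-\delta}$ with $\delta>\max\{0,\tfrac{n-4}{2}\}$ in harmonic coordinates, then apply Corollary \ref{linkJE} and the rigidity clause of the positive energy theorem. Your sketch of the bootstrap mechanism (elliptic equation for $R_g$ from the trace, elliptic equation for $\mathrm{Ric}_g$ from the trace-free part via \eqref{BachtensorwithDelta}, then iterate Lemma \ref{lemregularite} and Proposition \ref{regularitymaxwell} in the harmonic gauge) also matches the paper's Claim \ref{BootstrapClaim}.
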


\begin{proof}
The proof will rely on a previous positive energy theorem. The corollary we present is a direct consequence of \cite[Theorem A]{avalos2021positive}:
\begin{cor}
\label{cortheoARJP}
Let $(M,g)$  be a smooth AE manifold  of dimension $n\ge 3$, of class $W^{4,\infty}_{-\tau}(M,\Psi)$ with $\tau > \max \left\{ 0, \frac{n-4}{2} \right\}$ in some coordinate system associated to a structure of infinity $\Psi$, and such that $Y([g])>0$ and $Q_g \ge 0$. Then, if  $\mathcal{E}(g)=0$, $(M,g)\cong (\mathbb{R}^n,\delta)$.
\end{cor}



{Appealing to Corollary \ref{cortheoARJP}, if $g$ is $W^{4,\infty}_{-\tau}(M,\Phi)$-AE with $\tau > \max \left\{ 0, \frac{n-4}{2} \right\}$, then the result stands, since from Corollary \ref{linkJE} we know that $J_g=0$ implies $\mathcal{E}(g)=0$. Then, we can apply corollary \ref{cortheoARJP} and conclude.}  {Therefore, the rest of the proof shall be devoted to showing that if $g$ is a priori $W^{3,p}_{-\tau}$ for some $\tau \le \max \left\{ 0, \frac{n-4}{2} \right\}$ in some end coordinate system $\Phi$ and $J_g=0$, then $g$ is $ W^{k,p}_{-\tau'}(M,\Theta)$-AE for all $\tau \le \tau' < n-4 $ and all integers $k$ for a harmonic structure of infinity $\Theta$. In such a case, once again, Corollary \ref{cortheoARJP} concludes the proof. Theorem \ref{J-Bootstrap} below establishes this bootstrap claim and hence finishes the proof.} 


\end{proof}


\begin{theo}\label{J-Bootstrap}
Let $(M^n,g)$ be a smooth $W^{3,p}_{-\tau}$-AE manifold, $p>n$, {$n\geq 3$ and $\tau>0$ with respect to some structure of infinity $\Phi$. If $J_g\in W^{k-4,p}_{-\delta-4}(M,\Phi)$, $k\geq 4$ and $\tau\leq \delta$, then there is a structure of infinity $\Theta$, given by harmonic end coordinates, such that $g$ is $W^{k,p}_{-\tau}(M,\Theta)$-AE. Furthermore, if $0<\tau<\delta<n-4$, $n\geq 5$,} then $g$ is $W^{k,p}_{-\delta}(M,\Theta)$-AE. 
\end{theo}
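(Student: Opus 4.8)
The plan is to run a bootstrap on the metric decay, using the structural identity \eqref{J-tensor.2} to extract a fourth-order elliptic equation for the Ricci tensor from the hypothesis that $J_g$ has controlled decay. The key obstruction to directly improving derivative decay is that we only start with three derivatives controlled, whereas $J_g$ is a fourth-order object; the resolution is to first pass to harmonic coordinates via Theorem \ref{HarmonicCoordThm}, where \eqref{RicciHarmonic} turns the Ricci tensor into an elliptic operator on the metric, and then to feed improved control on $\mathrm{Ric}_g$ back into $g$ without losing regularity or decay under the change of structure of infinity.

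\textbf{Step 1: reading a fourth-order equation for the Ricci tensor off $J_g$.} First I would rewrite the hypothesis $J_g\in W^{k-4,p}_{-\delta-4}$ using \eqref{J-tensor.2}. Taking the trace gives $Q_g=\mathrm{tr}_g J_g\in W^{k-4,p}_{-\delta-4}$, and since the leading term of $Q_g$ is $-\frac{1}{2(n-1)}\Delta_g R_g$ plus quadratic curvature terms (which, by the multiplication Lemma \ref{MultiplicationPropertyGeneral} and the a priori $W^{3,p}_{-\tau}$ control, lie in a better-decaying space), one obtains an equation of the form $\Delta_g R_g\in W^{k-4,p}_{-\delta-4}$ modulo lower-order controllable terms. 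Combined with $R_g\in L^p_{-\delta}$ (or $L^p_{-\tau}$ initially), Lemma \ref{lemregularite} and Proposition \ref{regularitymaxwell} then promote $R_g$ to $W^{k-2,p}_{-\tau}$, and in the range $0<\tau<\delta<n-2$ to $W^{k-2,p}_{-\delta}$. The point is that tracing collapses the Bach and $T$-tensor contributions enough to isolate a scalar elliptic equation; one must check carefully that the quadratic remainder terms built from $S_g,\mathrm{Ric}_g$ decay fast enough via the algebra property, which is where $p>n$ and $\tau>0$ are used.

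\textbf{Step 2: from $J_g$ to $\mathrm{Ric}_g$, then to $g$ in harmonic coordinates.} With $Q_g$ controlled, the full identity \eqref{J-tensor.2} now gives control on $B_g+\tfrac{n-4}{4(n-1)}T_g$, hence on the full trace-free fourth-order part. The hard part will be extracting from this an elliptic equation for $\mathrm{Ric}_g$ itself rather than just for $R_g$: the Bach tensor involves $\Delta_g S_g$ and curvature-squared terms, so I expect to combine the $J$-flatness data with the second Bianchi identity to produce $\Delta_g \mathrm{Ric}_g\in W^{k-4,p}_{-\delta-4}$ up to quadratic remainders. Once $\mathrm{Ric}_g$ is known to gain regularity and decay, I switch to the harmonic structure of infinity $\Theta$ produced by Theorem \ref{HarmonicCoordThm}; crucially, that theorem guarantees the tensor $\mathrm{Ric}_g$ keeps its weighted Sobolev control in the $\Theta$-charts. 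In harmonic coordinates \eqref{RicciHarmonic} reads ${\mathrm{Ric}_g}_{ij}=-\tfrac{1}{2}g^{ab}\partial_{ab}g_{ij}+f_{ij}(g,\partial g)$, so $\Delta_g g_{ij}$ is controlled by $\mathrm{Ric}_g$ plus quadratic first-order terms; Lemma \ref{lemregularite} and Proposition \ref{regularitymaxwell} then bootstrap $g_{ij}$ to $W^{k,p}_{-\tau}(M,\Theta)$, and to $W^{k,p}_{-\delta}(M,\Theta)$ in the subcritical range $\tau<\delta<n-4$ where the Fredholm/decay hypothesis $\delta<n-2$ of Proposition \ref{regularitymaxwell} applies.

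\textbf{Step 3: closing the loop and the main obstacle.} The induction on $k$ proceeds exactly as in Lemma \ref{lemregularite}: differentiating the elliptic equation and using the multiplication property to show each derivative step stays in the correct weighted space. The main obstacle I anticipate is the circularity inherent in the quadratic remainder terms—the $f_{ij}(g,\partial g)$ and the curvature-squared pieces in $B_g$ and $T_g$ involve the very derivatives of $g$ whose decay we are trying to establish, so the argument must be staged so that at each stage the remainder is built only from derivatives already controlled at the previous stage, and its decay rate ($-2\tau-2$ type weights) must beat the target weight $-\delta-2$. This forces the assumption $\delta<n-4$ (rather than merely $\delta<n-2$) and the restriction $n\geq 5$ for the decay-improvement part, since for $n=3,4$ the target weight $n-4\leq 0$ leaves no room to gain, consistent with the zero-mass phenomenon noted after Theorem \ref{HerzlichThmIntro}. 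Verifying that the weighted exponents line up at the critical threshold, and that no exceptional weight is hit when invoking Proposition \ref{regularitymaxwell}, is the delicate bookkeeping that makes the fourth-order case genuinely harder than the classical Ricci bootstrap of \cite{MR849427}.
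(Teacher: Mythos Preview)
Your strategy is correct and matches the paper's proof in all essentials: trace $J_g$ to isolate $\Delta_g R_g$, use the trace-free part of \eqref{J-tensor.2} together with the Bach expansion to obtain $\Delta_g\mathrm{Ric}$, pass to harmonic coordinates via Theorem \ref{HarmonicCoordThm} (which preserves the weighted Sobolev control on $\mathrm{Ric}_g$), and then bootstrap $g$ through \eqref{RicciHarmonic} using Lemma \ref{lemregularite} and Proposition \ref{regularitymaxwell}.

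Two points where your writeup is looser than what the argument actually requires. First, you cannot reach $W^{k-2,p}_{-\delta}$ for $R_g$ (or $W^{k,p}_{-\delta}$ for $g$) in a single pass: the quadratic curvature terms in $Q_g$, $T_g$, $B_g$ only decay like $|x|^{-2\tau-4}$ at the first stage, so one step of the bootstrap improves the metric decay only from $\tau$ to $\min\{2\tau,\delta\}$. The paper makes this explicit by proving an inductive ``one-step'' claim ($W^{l-1,p}_{-\tau}\Rightarrow W^{l,p}_{-\min\{2\tau,\delta\}}$) and then iterating: first pushing the derivative count from $3$ up to $k$ at fixed decay $\tau$, and then running a geometric doubling $\tau_0=\tau$, $\tau_{i+1}=2\tau_i$ until $2\tau_i\geq\delta$. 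You allude to this staging in Step 3, but your Step 1 reads as if the jump to $-\delta$ were immediate; it is not, and the doubling iteration is where the argument actually closes. Second, a minor bookkeeping correction: $R_g$ lies a priori in $L^p_{-\tau-2}$, not $L^p_{-\tau}$, and the constraint $\delta<n-4$ arises precisely because applying Proposition \ref{regularitymaxwell} to $R_g$ (and to $\mathrm{Ric}_{uv}$) requires the target weight $\sigma\leq\delta+2$ to satisfy $\sigma<n-2$.
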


\begin{remark}\label{DecayBootsrapRemark}
{In practical terms, the above theorem gives a way to check whether one can bootstrap a $W^{3,p}_{-\tau}$-AE metric $g$: since by Theorem \ref{HarmonicCoordThm} changing to harmonic coordinates preserves the weighted Sobolev control on tensor fields, one can first fix harmonic coordinates, where we are guaranteed to have $g$ as $W^{3,p}_{-\tau}(M,\Theta)$-AE, and then check in these coordinates the behaviour of $J_g$, knowing a priori that any Sobolev control for it cannot be lost under this change of end coordinates. If $J_g$ remains controlled in terms of weighted Sobolev norms $W^{k,p}_{-\delta}(M,\Theta)$, Theorem \ref{J-Bootstrap} will bootstrap the decay for intermediary derivatives, and, when possible, also the order of their decay. This procedure is optimal, avoiding the search of other possible structure of infinity maintaining the original control on $g$ as well as an additional potential better control on $J_g$.}
\end{remark}

\begin{proof}
Let us start by proving the following claim, which shall give us an inductive argument:

\begin{claime}\label{BootstrapClaim}
Under the hypotheses of the theorem, if $g$ is $W^{l-1,p}_{-\tau}(M,\Phi)$-AE and $J_g\in W^{l-4,p}_{-\delta-4}(M,\Phi)$ with $l\geq 4$, then $g$ is $W^{l,p}_{-\tau}(M,\Theta)$-AE. Furthermore, if $0<\tau<\delta<n-4$, $n\geq 5$, then $g$ is $W^{l,p}_{-\sigma}(M,\Theta)$-AE for all $\tau \leq \sigma \leq \min\{2\tau,\delta\}$. 
\end{claime}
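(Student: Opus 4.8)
The plan is to switch to harmonic end coordinates and then run a three–stage elliptic bootstrap — first on the scalar curvature $R_g$, then on the Ricci tensor, and finally on the metric itself — using $J_g$ as the carrier of the fourth–order information. First I would pass to a harmonic structure of infinity $\Theta$ via Theorem \ref{HarmonicCoordThm}: since $g$ is $W^{l-1,p}_{-\tau}(M,\Phi)$–AE with $l-1\geq 3$ and $p>n$, the metric stays $W^{l-1,p}_{-\tau}(M,\Theta)$–AE, and as $J_g$ is a tensor field of regularity index $l-4\leq l-1$ its control is preserved, so $J_g\in W^{l-4,p}_{-\delta-4}(M,\Theta)$. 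Working in $\Theta$ is what makes the bootstrap close: $\mathrm{Ric}_g$ obeys the quasilinear elliptic expression \eqref{RicciHarmonic}, and the harmonic gauge $g^{ab}\Gamma^c_{ab}=0$ turns it into the scalar equation $\Delta_g(g_{ij}-\delta_{ij})=-2\,\mathrm{Ric}_{ij}+2f_{ij}(g,\partial g)$ on components (the background being flat in these coordinates), to which Lemma \ref{lemregularite} and Proposition \ref{regularitymaxwell} apply directly.

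The heart of the proof, and the step I expect to be the main obstacle, is a tensorial identity extracting the rough Laplacian of Ricci from $J_g$. Expanding \eqref{J-tensor.2} and \eqref{T-tensor}, using $\mathrm{tr}_gS_g=\tfrac{1}{2(n-1)}R_g$ together with the standard expression of the Bach tensor through the Schouten tensor — whose fourth–order part is $\tfrac{1}{n-2}\Delta_g\mathrm{Ric}_g$ plus $\nabla^2R_g$ and $\Delta_gR_g\,g$ contributions — one is led to a relation of the schematic form $\Delta_g\mathrm{Ric}_g=c_1J_g+c_2Q_g\,g+c_3\nabla^2R_g+c_4\Delta_gR_g\,g+\mathcal{Q}$, with $c_1\neq 0$ and $\mathcal{Q}$ collecting quadratic curvature terms of faster decay. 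The delicate points are to check that the coefficient of $\Delta_g\mathrm{Ric}_g$ is nonzero in the relevant dimensions and to carefully bookkeep the lower–order terms; note that $Q_g=\mathrm{tr}_gJ_g$ is already controlled in $W^{l-4,p}_{-\delta-4}$ through Lemma \ref{MultiplicationPropertyGeneral}, and that the connection Laplacian $\Delta_g\mathrm{Ric}_g$ differs from the coordinate Laplacian acting on components only by controlled Christoffel terms, so componentwise elliptic regularity is legitimate.

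For the regularity assertion I would bootstrap at the fixed rate $\tau$. From \eqref{Qgintro}, $\Delta_gR_g=-2(n-1)Q_g+\mathcal{Q}$ lies in $W^{l-4,p}_{-\tau-4}$, since the $Q_g$ term sits in $W^{l-4,p}_{-\delta-4}\subset W^{l-4,p}_{-\tau-4}$ and the quadratic terms in $W^{l-3,p}_{-2\tau-4}\subset W^{l-4,p}_{-\tau-4}$; hence Lemma \ref{lemregularite} upgrades $R_g$ from $W^{l-3,p}_{-\tau-2}$ to $W^{l-2,p}_{-\tau-2}$. Feeding this into the identity above gives $\Delta_g\mathrm{Ric}_g\in W^{l-4,p}_{-\tau-4}$, whence $\mathrm{Ric}_g\in W^{l-2,p}_{-\tau-2}$ by Lemma \ref{lemregularite} applied componentwise. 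Finally $f_{ij}(g,\partial g)\in W^{l-2,p}_{-2\tau-2}\subset W^{l-2,p}_{-\tau-2}$, so the harmonic equation yields $\Delta_g(g-\delta)\in W^{l-2,p}_{-\tau-2}$, and a last application of Lemma \ref{lemregularite} with $k=l$ gives $g-\delta\in W^{l,p}_{-\tau}$, i.e. $g$ is $W^{l,p}_{-\tau}(M,\Theta)$–AE.

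For the decay assertion, under $0<\tau<\delta<n-4$ and $n\geq5$, I would repeat the three steps but replace Lemma \ref{lemregularite} by the Fredholm improvement of Proposition \ref{regularitymaxwell}, now tracking weights. Writing $\mu\doteq\min\{2\tau,\delta\}$, the quadratic terms decay at rate $2\tau\geq\mu$ and the $J_g,Q_g$ terms at rate $\delta\geq\mu$, so at each stage the relevant Laplacian lands in weight $-\mu-4$ (for $R_g$ and $\mathrm{Ric}_g$) or $-\mu-2$ (for $g-\delta$). The hypotheses $0<\mu+2<n-2$ and $0<\mu<n-2$ of Proposition \ref{regularitymaxwell} hold precisely because $\mu\leq\delta<n-4$, which is exactly where $\delta<n-4$ is used (and which makes the assertion vacuous for $n\leq4$, consistent with requiring $n\geq5$). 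Proposition \ref{regularitymaxwell} then improves $R_g$ and $\mathrm{Ric}_g$ to $W^{l-2,p}_{-\mu-2}$ and finally $g-\delta$ from $W^{2,p}_{-\tau}$ to $W^{l,p}_{-\mu}$; since $W^{l,p}_{-\mu}\subset W^{l,p}_{-\sigma}$ for every $\tau\leq\sigma\leq\mu$, the claim follows for all such $\sigma$ simultaneously.
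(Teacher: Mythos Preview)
The proposal is correct and takes essentially the same approach as the paper: a three-stage elliptic bootstrap on $R_g$, then $\mathrm{Ric}_g$, then $g$, using $Q_g=\mathrm{tr}_gJ_g$ together with the Bach-tensor expression to extract $\Delta_gR_g$ and $\Delta_g\mathrm{Ric}_g$ from $J_g$, followed by the harmonic-coordinate Ricci formula \eqref{RicciHarmonic} and Lemma~\ref{lemregularite}/Proposition~\ref{regularitymaxwell}. The differences are purely organizational --- you pass to harmonic coordinates at the outset rather than only for the final metric step, you state the $\Delta_g\mathrm{Ric}_g$ identity schematically rather than going through the explicit $B_g$--$T_g$ decomposition \eqref{J-tensor.2}--\eqref{BachtensorwithDelta}, and you run the regularity and decay gains as two separate passes instead of tracking both at once through a parameter $\sigma$ --- but the analytic content is identical.
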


\begin{proof}
Since $(M,g)$ is an AE manifold of dimension $n\ge 3$ of class $W^{l-1,p}_{-\tau}$, $l\geq 4$, $0< \tau $, $p>n$, then, in end coordinates associated to $\Phi$, 
\begin{equation}\label{Ricreg1}
\mathrm{Ric}_{uv} \in W^{l-3,p}_{- \tau - 2}(\mathbb{R}^n\backslash\overline{B_1(0)},\Phi).
\end{equation}
Tracing the above also yields
\begin{equation}
\label{Rreg1}
{R}_g \in W^{l-3,p}_{-\tau-2}(M,\Phi),
\end{equation}
and thus, by  definition of the Schouten tensor:
\begin{equation}
\label{Sreg1}
S_g \in W^{l-3,p}_{-\tau-2}(M,\Phi).
\end{equation}

Let us now  recall the definition of \eqref{Qgintro}:
$$
\label{Qcurvdef}
Q_g \doteq - \frac{1}{2(n-1)} \Delta_g {R}_g - \frac{2}{(n-2)^2} \left| \mathrm{Ric}_g\right|^2_g + \frac{ n^3 - 4 n^2 + 16 n - 16}{8(n-1)^2 (n-2)^2 } {R}_g^2.
$$
Using once more Proposition \ref{Wkpdeltaalgebra} for $p >n$ and $l-3\geq 1$, given \eqref{Ricreg1} and \eqref{Rreg1}, we deduce $\left| \mathrm{Ric}_g\right|^2_g, \, {R}_g^2 \in W^{l-3,p}_{-2\tau-4}$. 
Since $\mathrm{tr}_g( J_g) = Q_g\in W^{l-4,p}_{-\delta-4}$, then \eqref{Qgintro} yields 
\begin{equation}\label{estdeltaR}
\Delta_g R_g \in W^{l-4,p}_{-\sigma_0 -2}(M,\Phi),
\end{equation}
for $\tau +2 \le \sigma_0 \doteq \min\{2\tau +2,\delta+2\}$. Notice then that $R_g\in W^{l-3,p}_{-\tau-2}(M,\Phi)$ and satisfies (\ref{estdeltaR}). Thus, we can apply Lemma \ref{lemregularite} to guarantee $R_g\in W^{l-2,p}_{-\tau-2}(M,\Phi)$. But now, we can  apply proposition \ref{regularitymaxwell} with \eqref{estdeltaR} and conclude that\footnote{{Notice that applying Lemma \ref{lemregularite} first was necessary to guarantee $R_g\in W^{l-3,p}_{-\tau-2}$, with $p>n$, and $l-2\geq 2$, which puts us under the hypotheses of Proposition \ref{regularitymaxwell}.}}${}^{,}$\footnote{{Notice that by hypothesis $\delta<n-4$ and hence $\delta+2<n-2$. Therefore, $\sigma_0\in (0,n-2)$, which is the range for the isomorphism claim in Proposition \ref{regularitymaxwell}.}} 

\begin{equation}\label{estR}
R_g \in W^{l-2,p}_{-\sigma}(M,\Phi)\text{ with } \begin{cases} \tau +2 \le \sigma \leq \min \left\{ 2\tau + 2 , \delta+2 \right\}, \text{ if } \tau<\delta<n-4\\
\sigma=\tau+2, \text{ otherwise }
\end{cases}
\end{equation}
Since $\mathrm{tr}_g (S_g) = \frac{{R}_g}{2(n-1)} $, we can rephrase and control the highest order component  of the $T_g$-tensor as defined by \eqref{T-tensor}:
\begin{align}\label{controleD2T}
\begin{split}
\nabla^2 \mathrm{tr}_g S_g - \frac{1}{n} \Delta_g \mathrm{tr}_g S_g g &= \frac{\nabla^2 {R}_g - \frac{1}{n} \Delta_g {R}_g g}{2(n-1)}  \in W^{l-4,p}_{- \sigma -2 } (M,\Phi)\\
  &\text{ with } \begin{cases} \tau +2 \le \sigma \leq \min \left\{ 2\tau + 2 , \delta+2 \right\}, \text{ if } \tau<\delta<n-4\\
\sigma=\tau+2, \text{ otherwise }
\end{cases}
\end{split}
\end{align}

Indeed,  $\nabla_{uv} {R}_g = \partial_{uv} {R}_g - \Gamma_{uv}^k \partial_k{R}_g$, with $\partial_{uv} {R}_g \in W^{l-4,p}_{- \sigma -2 } (\mathbb{R}^n\backslash\overline{B_1(0)})$, $\partial_k {R}_g \in W^{l-3,p}_{-\sigma -1}(\mathbb{R}^n\backslash\overline{B_1(0)})$. On the other hand, $\Gamma^k_{pq} \in W^{l-2,p}_{-\tau-1}(\mathbb{R}^n\backslash\overline{B_1(0)})$, and thus $\Gamma^k_{pq} \partial_k {R}_g \in W^{l-3,p}_{-\sigma - \tau -2}(\mathbb{R}^n\backslash\overline{B_1(0)}) \subset W^{l-3,p}_{-\sigma-2}(\mathbb{R}^n\backslash\overline{B_1(0)})$. The same reasoning yields $\Delta_g {R}_g \in W^{l-4,p}_{-\sigma-2}(M,\Phi)$, and thus \eqref{controleD2T}. 

Since all lower order terms in \eqref{T-tensor} are quadratic, they can be estimated as in  \eqref{estdeltaR} which yields 
\begin{equation}\label{controleT}
T_g \in W^{l-4,p}_{- \sigma -2 } (M,\Phi) \text{ with } \begin{cases} \tau +2 \le \sigma \leq \min \left\{ 2\tau + 2 , \delta+2 \right\}, \text{ if } \delta<n-4\\
\sigma=\tau+2, \text{ otherwise }
\end{cases}
\end{equation}

Going back to the expression of the $J_g$-tensor \eqref{J-tensor.2}, it can be pointed out that the tracefree part of $J_g$ is  a linear combination of the Bach tensor $B_g$ and $T_g$.  Under our hypothesis $J_g\in W^{l-4,p}_{-\delta-4}(M,\Phi)$ and appealing to \eqref{controleT}, this implies that 
\begin{equation}\label{controleB}
B_g \in W^{l-4,p}_{- \sigma -2 } (M,\Phi) \text{ with } \begin{cases} \tau +2 \le \sigma \leq \min \left\{ 2\tau + 2 , \delta+2 \right\}, \text{ if } \tau<\delta<n-4\\
\sigma=\tau+2, \text{ otherwise }
\end{cases}
\end{equation}

We  will use the following expression for the Bach tensor  (see \cite[Equation (2-6)]{aa43}):
\begin{equation}\label{BachtensorwithDelta}
\begin{aligned}
B_{uv}&=\frac{1}{n-2}\Delta_g \mathrm{Ric}_{uv} - \frac{1}{2(n-1)(n-2)} \Delta_g {R}_g g_{uv} - \frac{1}{2(n-1)} \nabla_{uv} {R}_g  + 2 \mathrm{Riem}_{auvb} S^{ab}\\ & -(n-4) S^a_u S_{av}  - \left|{S}_g\right|^2g_{uv} - 2 \mathrm{Tr}(S_g) S_{uv},
\end{aligned}
\end{equation}
where 
$\Delta_g \mathrm{Ric}_{uv}$ denotes the tensorial Laplacian expressed in the chosen end-chart:
$$\begin{aligned}
\Delta_g \mathrm{Ric}_{uv} &= g^{ab} \left[ \partial_a \left( \partial_b \mathrm{Ric}_{uv}- \Gamma^k_{bu} \mathrm{Ric}_{kv}  - \Gamma^k_{bv} \mathrm{Ric}_{uk} \right) - \Gamma^k_{ab} \left( \partial_k \mathrm{Ric}_{uv} - \Gamma^l_{ku} \mathrm{Ric}_{lv} - \Gamma^l_{vk} \mathrm{Ric}_{ul} \right) \right. \\& \left.- \Gamma^k_{au} \nabla_b \mathrm{Ric}_{kv}- \Gamma^k_{av} \nabla_b \mathrm{Ric}_{uk}\right] \\
&= g^{ab} \left[ \partial_{ab} \mathrm{Ric}_{uv} - \Gamma^k_{ab} \partial_k \mathrm{Ric}_{uv}  \right.  \\& - \left. \left\{ \partial_a \left( \Gamma^k_{bu} \mathrm{Ric}_{kv}+ \Gamma^k_{bv} \mathrm{Ric}_{uk} \right) - \Gamma^k_{ab}  \Gamma^l_{ku} \mathrm{Ric}_{lv}-  \Gamma^k_{ab} \Gamma^l_{vk} \mathrm{Ric}_{ul}  +  \Gamma^k_{au}\nabla_b \mathrm{Ric}_{kv} + \Gamma^k_{av} \nabla_b\mathrm{Ric}_{uk} \right\} \right].
\end{aligned}$$

For analytic convenience, we will write 
\begin{equation}
\label{LaplBeltramiRicci}
\Delta_g \mathrm{Ric}_{uv} = \Delta_g  \left(\mathrm{Ric}_{uv} \right) + \mathrm{E}_{uv},
\end{equation}
with 
\begin{equation}
\label{ResteLaplace}
\mathrm{E}_{uv} =g^{ab}\left[ \partial_a \left( \Gamma^k_{bu} \mathrm{Ric}_{kv}+ \Gamma^k_{bv} \mathrm{Ric}_{uk} \right) - \Gamma^k_{ab}  \Gamma^l_{ku} \mathrm{Ric}_{lv}-  \Gamma^k_{ab} \Gamma^l_{vk} \mathrm{Ric}_{ul}  +  \Gamma^k_{au}\nabla_b \mathrm{Ric}_{kv} + \Gamma^k_{av} \nabla_b\mathrm{Ric}_{uk} \right],
\end{equation}
and where $\Delta_g \left( \mathrm{Ric}_{uv} \right) $ denotes the Laplace-Beltrami operator for \emph{functions} applied to the function defined by the $u,v$ component  of the Ricci tensor in our choice of end coordinates.

As in the proof of  \eqref{estdeltaR}, thanks  to Proposition \ref{Wkpdeltaalgebra},  all the quadratic terms in \eqref{BachtensorwithDelta} are  in $W^{l-3,p}_{-\sigma - 2 } (M,\Phi)$. 
In addition, as in \eqref{controleD2T}, since \eqref{estR} stands, $\Delta_g {R}_g g_{uv} , \nabla_{uv}R_g \in W^{l-4,p}_{-\sigma -2}(M,\Phi)$. Injecting all this in \eqref{controleB} then yields:
\begin{equation}\label{estDeltaRic}
\Delta_g \mathrm{Ric}_{uv} \in  W^{l-4,p}_{-\sigma -2}(\mathbb{R}^n\backslash\overline{B_1(0)},\Phi) \quad\text{ with } \begin{cases} \tau +2 \le \sigma \leq \min \left\{ 2\tau + 2 , \delta+2 \right\}, \text{ if } \tau<\delta<n-4\\
\sigma=\tau+2, \text{ otherwise }
\end{cases} 
\end{equation}

Injecting \eqref{estDeltaRic} into \eqref{LaplBeltramiRicci} then ensures that for all $ u,v$:
\begin{equation}\label{estDeltaRicLB1}
\quad \Delta_g \left( \mathrm{Ric}_{uv} \right) = - \mathrm{E}_{uv} + W^{l-4,p}_{-\sigma -2}(\mathbb{R}^n\backslash\overline{B_1(0)},\Phi) \text{ with } \begin{cases} \tau +2 \le \sigma \leq \min \left\{ 2\tau + 2 , \delta+2 \right\}, \text{ if } \tau<\delta<n-4\\
\sigma=\tau+2, \text{ otherwise }
\end{cases}
\end{equation}
We then need to control the remainder term $\mathrm{E}_{uv}$. Since the Christoffel symbols are known to be in $W^{l-2,p}_{-\tau-1}$, and with \eqref{Ricreg1} $\mathrm{Ric} \in W^{l-3,p}_{-\tau-2}(M,\Phi)$, we can deduce that $\partial \mathrm{Ric} \in W^{l-4,p}_{-\tau-3}$ and $\Gamma \mathrm{Ric} \in W^{l-3,p}_{-2\tau -3}$, which yields $\nabla \mathrm{Ric} \in W^{l-4,p}_{-\tau-3}(M,\Phi)$.
Hence:
\begin{equation} \label{Tdecompoen3}\begin{aligned}
 &\Gamma^k_{au}\nabla_b \mathrm{Ric}_{kv} + \Gamma^k_{av}\nabla_b \mathrm{Ric}_{uk} \in W^{l-4,p}_{-2\tau -4}(M,\Phi) \\
& \Gamma^k_{ab}  \Gamma^l_{ku} \mathrm{Ric}_{lv}+  \Gamma^k_{ab} \Gamma^l_{uk} \mathrm{Ric}_{vl} \in W^{l-3,p}_{-3\tau - 4}(M,\Phi) \subset W^{l-4,p}_{-2\tau -4}(M,\Phi) \\ &\partial_a \left( \Gamma^k_{bu} \mathrm{Ric}_{kv}+ \Gamma^k_{bv} \mathrm{Ric}_{uk} \right) \in W^{l-4,p}_{-2\tau -4}(M,\Phi).  
\end{aligned}
\end{equation}
Together \eqref{ResteLaplace} and \eqref{Tdecompoen3} ensure that $\mathrm{E}_{uv} \in W^{l-4,p}_{-2\tau -4}(M,\Phi)$, and injected into \eqref{estDeltaRicLB1} yields for all $u,v$:
\begin{equation}\label{estDeltaRicLB2}
\Delta_g \left( \mathrm{Ric}_{uv} \right)  \in W^{l-4,p}_{-\sigma -2}(\mathbb{R}^n\backslash\overline{B_1(0)},\Phi)\text{ with } \begin{cases} \tau +2 \le \sigma \leq \min \left\{ 2\tau + 2 , \delta+2 \right\}, \text{ if } \tau<\delta<n-4\\
\sigma=\tau+2, \text{ otherwise }
\end{cases}
\end{equation}

Thus, since a priori $\mathrm{Ric}_g\in W^{l-3,p}_{-\tau-2}(M,\Phi)$, $l-3\geq 1$, we can first apply Lemma \ref{lemregularite} to bootstrap from (\ref{estDeltaRicLB2}) to $\mathrm{Ric}_g\in W^{l-2,p}_{-\tau-2}(M,\Phi)$, with $l-2\geq 2$, and then thanks to Proposition \ref{regularitymaxwell}, \eqref{Ricreg1} and \eqref{estDeltaRicLB2} yield
\begin{equation}\label{estRic}
\mathrm{Ric}_{uv} \in  W^{l-2,p}_{-\sigma}(\mathbb{R}^n\backslash\overline{B_1(0)},\Phi) \text{ with } \begin{cases} \tau +2 \le \sigma \leq \min \left\{ 2\tau + 2 , \delta+2 \right\}, \text{ if } \tau<\delta<n-4\\
\sigma=\tau+2, \text{ otherwise }
\end{cases}.
\end{equation}

Now, appealing to Theorem \ref{HarmonicCoordThm}, we know that $\mathrm{Ric}_g\in W^{l-2,p}_{-\sigma}(M,\Theta)$ where $\Theta$ denotes harmonic structure of infinity. Also, in the corresponding harmonic end coordinates, Theorem \ref{HarmonicCoordThm} also guarantees that $g_{ij}-\delta_{ij}\in W^{l-1,p}_{-\tau}(\Theta)$. Thus, appealing to \eqref{RicciHarmonic}, we can write the following expression in harmonic coordinates:
\begin{equation} \label{rewritericci}\begin{aligned}
\mathrm{Ric}_{uv} &= -\frac{1}{2}g^{ab} \partial_{ab}g_{uv}  +D_{uv}(g, \partial g)  \\
&= -\frac{1}{2} \Delta_\delta \left[g_{uv} - \delta_{uv} \right]  -\frac{1}{2} \left( g^{ab} - \delta^{ab} \right) \partial_{ab} g_{uv} + D_{uv} (g, \partial g),
\end{aligned}\end{equation}
where $\Delta_\delta$ denotes the flat Laplacian on the structure of infinity $\Theta$, on which we now work.

We know that $D_{uv}(g, \partial g) \in W^{l-2,p}_{-2\tau-2 }(\mathbb{R}^n\backslash\overline{B_1(0)},\Theta)$. Since $ g^{ab} - \delta^{ab} \in W^{l-1,p}_{-\tau} (\mathbb{R}^n\backslash\overline{B_1(0)},\Theta)$ and  $\partial_{ab} g_{uv} \in  W^{l-3,p}_{-\tau-2} (\mathbb{R}^n\backslash\overline{B_1(0)},\Theta)$, we also have that
\begin{align*}
\left( g^{ab} - \delta^{ab} \right) \partial_{ab} g_{uv} \in W^{l-3,p}_{-2\tau -2}(\mathbb{R}^n\backslash\overline{B_1(0)},\Theta) \subset  W^{l-3,p}_{-\sigma}(\mathbb{R}^n\backslash\overline{B_1(0)},\Theta).
\end{align*}
Thanks to \eqref{estRic}, we can then rewrite \eqref{rewritericci} into
\begin{align*}
\Delta_\delta \left[ g_{uv}- \delta_{uv}\right] \in W^{l-2,p}_{-\sigma}(\mathbb{R}^n\backslash\overline{B_1(0)},\Theta) \text{ with } \begin{cases} \tau +2 \le \sigma \leq \min \left\{ 2\tau + 2 , \delta+2 \right\}, \text{ if } \tau<\delta<n-4\\
\sigma=\tau+2, \text{ otherwise }
\end{cases}
\end{align*}
Since $g_{uv}- \delta_{uv} \in W^{l-1,p}_{-\tau}(\mathbb{R}^n\backslash\overline{B_1(0)},\Theta)$, $l-1\geq 3$, Proposition \ref{regularitymaxwell} yields that 
\begin{equation} 
g_{uv}-\delta_{uv} \in W^{l,p}_{-\left[ \sigma -2 \right]}(\mathbb{R}^n\backslash\overline{B_1(0)},\Theta) \text{ with } \begin{cases} \tau +2 \le \sigma \leq \min \left\{ 2\tau + 2 , \delta+2 \right\}, \text{ if } \tau<\delta<n-4\\
\sigma=\tau+2, \text{ otherwise }
\end{cases}
\end{equation}
\end{proof}

Using the above claim, if $g$ is $W^{3,p}_{-\tau}(M,\Phi)$-AE and $J_g\in W^{k-4,p}_{-\delta-4}(M,\Phi)$, $k\geq 4$, then we can start an iterative bootstrap, which shall increase both the number of derivatives decaying as well as the rate of decay. To simplify the argument, let us notice that Claim \ref{BootstrapClaim} gives the possibility to decouple these processes. Thus, concerning the number of decaying of derivatives, in the $j$-th step we bootstrap $g$ to $W^{3+j,p}_{-\tau}(M,\Theta)$-AE, as long as $3+j\leq k-1$, and hence the bootstrap on the number of derivative only ends when we reach $g$ as $W^{k,p}_{-\tau}(M,\Theta)$-AE. Then, if $\tau<\delta<n-4$, we can bootstrap the order of decay as follows using Claim \ref{BootstrapClaim}.

First, notice that Theorem \ref{HarmonicCoordThm} implies that $J_g\in W^{k-4,p}_{-\delta-4}(M,\Theta)$. Thus, since $g$ is $W^{k,p}_{-\tau}(M,\Theta)$-AE and $J_g\in W^{k-4,p}_{-\delta-4}(M,\Theta)$, then Claim \ref{BootstrapClaim} gives $g$ as $W^{k,p}_{-\sigma}(M,\Theta)$-AE, for all $\sigma\leq \min \left\{ 2\tau , \delta \right\}$. Set $\tau_0\doteq \tau$, and build a sequence $\{\tau_i\}_{i\geq 0}$ as follows:
\begin{enumerate}
\item Starting with $\sigma=\tau_0$ and $g$ as $W^{k,p}_{-\sigma}(M,\Theta)$-AE, if $\min \left\{ 2\tau_i , \delta \right\}=\delta$, then we obtain $g$ as $W^{k,p}_{-\delta}(M,\Theta)$-AE and the procedure stops;
\item If $2\tau_i< \delta$, set $\tau_{i+1}\doteq 2\tau_i$. Since $g$ is $W^{k,p}_{-2\tau_i}(M,\Theta)$-AE and $J_g\in W^{k-4,p}_{-\delta-4}(M,\Theta)$ with $2\tau_i<\delta$, then Claim \ref{BootstrapClaim} gives $g$ is $W^{k,p}_{-\min\{2\tau_i,\delta\}}(M,\Theta)$-AE; 
\item Now, go back to item 1 starting with $\sigma=\min\{2\tau_i,\delta\}$ and iterate.
\end{enumerate}
Since as long as $2\tau_i<\delta$, we have $\tau_{i+1}=2\tau_i=2^{i}\tau$, after a finite number of loops we must find $2\tau_{i+1}>\delta$ and, in that iteration, then the bootstrap stops at the first step in this algorithm.

\end{proof}

\addcontentsline{toc}{section}{References}
\printbibliography

\end{document}